\newcommand{\ff}{h}
\newcommand{\nth}{\tfrac{1}{n-1}}
\newcommand{\Riem}{\Rm}
\newcommand{\Scal}{\operatorname {R}}
\newcommand{\Ric}{\operatorname{Rc}}
\newcommand{\Rict}{\overline{\Ric}}
\newcommand{\ra}{\rangle}
\newcommand{\la}{\langle}
\newcommand{\half}{\tfrac{1}{2}}
\newcommand{\B}{\mathcal{B}}
\newcommand{\C}{\mathcal{C}}
\newcommand{\CH}{\mathcal{H}}
\newcommand{\area}{\mathcal{H}^{n-1}}
\newcommand{\dmu}{d \area}
\newcommand{\tr}{\operatorname{tr}}
\newcommand{\del}{\partial}
\newcommand{\CL}{\mathcal{L}}
\newcommand{\R}{\mathbb{R}}
\renewcommand{\S}{\mathbb{S}}
\newcommand{\graph}{\operatorname{graph}}
\newcommand{\Rm}{\operatorname{Rm}}
\newcommand{\supp}{\operatorname{supp}}
\newcommand{\gt}{\overline{g}}
\newcommand{\tf}{\mathring{h}}
\newcommand{\Lapt}{\overline{\Delta}}
\newcommand{\nabt}{\overline{\nabla}}
\newcommand{\id}{\operatorname{id}}
\renewcommand{\div}{\operatorname{div}}
\renewcommand {\H} {\mathcal{H}}
\newtheorem{theorem}{Theorem}[section]
\newtheorem{lemma}[theorem]{Lemma}
\newtheorem{definition}[theorem]{Definition}
\newtheorem{corollary}[theorem]{Corollary}
\newtheorem{proposition}[theorem]{Proposition}
\theoremstyle{remark}
\newtheorem{remark}[theorem]{Remark}
\title[Unique isoperimetric foliations]{Unique isoperimetric foliations of asymptotically flat manifolds in all dimensions}
\author{Michael Eichmair \and Jan Metzger}
\address{Michael Eichmair, ETH Z\"urich, Departement Mathematik, 8092 Z\"urich, Switzerland}
\email{michael.eichmair@math.ethz.ch}
\address{Jan Metzger, Universit\"at Potsdam, Institut f\"ur Mathematik, Am
Neuen Palais 10, 14469 Potsdam, Germany}
\email{jan.metzger@uni-potsdam.de}
\thanks{}
\date{\today}
\begin{document}

\maketitle


\section{Introduction}

The question of isoperimetry \emph{What is the largest amount of volume that can be enclosed by a given amount of area?} can be traced back to antiquity.\footnote{We refer the reader to \cite{Dido} for a beautiful collection of materials on the history of the isoperimetric problem.} The first mathematically rigorous results are as recent as the nineteenth century. The question of isoperimetry and its close relative, the analysis of minimal surfaces, are two of the model problems of the geometric calculus of variations.

The list of geometries where an explicit answer to the question of isoperimetry is available is short. We provide an overview of available results in Appendix \ref{sec:overview}. In \cite{Eichmair-Metzger:2010} and this paper, we extend this list by a class of Riemannian manifolds $(M, g)$ for which we describe all large isoperimetric regions completely. 

We refer the reader to Section \ref{sec:definitions-notation} for the precise definitions of all terms in the statement of our first main theorem:  

\begin{theorem} \label{thm:main} Let $(M, g)$ be an $n$-dimensional initial data set with $n \geq 3$ that is $\C^0$-asymptotic to Schwarzschild of mass $m>0$. There exists $V_0>0$ such that for every $V \geq V_0$ the infimum in 
  \begin{equation} \label{eqn:mainiso}
    \begin{split}
      \inf \{\H^{n-1}_g(\partial \Omega) : \Omega \subset M 
      \text{ is a smooth region with } \CL^n_g(\Omega) = V \}
    \end{split}
  \end{equation} 
is achieved by a smooth isoperimetric region $\Omega_V \subset M$. The boundary of $\Omega_V$ is close to a centered coordinate sphere $S_r$ where $r$ is such that $\CL^n_g(B_r) = V = \CL^n_g(\Omega_V)$. If $(M, g)$ is $\C^2$-asymptotic to Schwarzschild of mass $m>0$, then $V_0>0$ can be chosen such that for every $V \geq V_0$, $\Omega_V$ is the unique minimizer of \eqref{eqn:mainiso}, and such that the hypersurfaces $\{\partial \Omega_V\} _{V \geq V_0}$ foliate $M \setminus \Omega_0$ smoothly. If $(M, g)$ is also asymptotically even, then the centers of mass of the boundaries $\partial \Omega_V$ converge to the center of mass of $(M, g)$ as $V \to \infty$. 
\end{theorem}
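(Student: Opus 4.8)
I would prove the theorem in three stages: (i) existence of large isoperimetric regions and a rough localization of their boundaries near centered coordinate spheres; (ii) an effective curvature estimate that identifies each such boundary with a leaf of the canonical stable constant mean curvature foliation of the asymptotic region, giving uniqueness and the smooth foliation; (iii) the computation of the Euclidean center of mass of the leaves and its comparison with the $\operatorname{ADM}$ center of mass.

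\textbf{Existence and rough localization.} Given $V$ large, take a minimizing sequence $\Omega_i$ for \eqref{eqn:mainiso}; by compactness of sets of finite perimeter the only obstruction to convergence to a genuine smooth isoperimetric region is loss of volume and area to infinity. To rule this out I would use that $(M,g)$ is $\C^0$-asymptotic to Schwarzschild of positive mass $m>0$: on large scales the end is close to exact Schwarzschild, whose isoperimetric regions are the centered coordinate balls by Bray's theorem, and these enclose \emph{strictly less} area than a Euclidean ball of equal volume. A chunk of volume escaping to infinity, being asymptotically Euclidean, is thus strictly less efficient than relocating that volume into the Schwarzschild part, and a quantitative form of this comparison forbids the escape, whence $\Omega_i\to\Omega_V$ with $\CL^n_g(\Omega_V)=V$. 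Regularity theory for isoperimetric boundaries makes $\partial\Omega_V$ smooth away from a set of Hausdorff codimension at least $7$, which will be empty once $\partial\Omega_V$ is known to be a graph over a coordinate sphere. Applying the same comparison to $\Omega_V$ itself traps $\partial\Omega_V$ between coordinate spheres of radii $r_1\le r_2$ with $r_1/r(V),\,r_2/r(V)$ arbitrarily close to $1$ for $V_0$ large, where $\CL^n_g(B_{r(V)})=V$; this is the stated closeness to $S_{r(V)}$. These steps are carried out in \cite{Eichmair-Metzger:2010}, which I would quote.

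\textbf{Uniqueness and the foliation.} Assume the stronger, $\C^2$-asymptotic hypothesis. Since $\partial\Omega_V$ is an isoperimetric boundary it has constant mean curvature and is stable. I would insert the rough localization into the stability inequality to bound $\tf$, the traceless second fundamental form, and then use elliptic estimates together with an iteration ruling out oscillation in the asymptotic region to upgrade the $\C^0$ bound to the statement that, after rescaling by $r(V)^{-1}$, $\partial\Omega_V$ is $\C^{2,\alpha}$-close to the unit sphere, uniformly in $V\ge V_0$. A closed, stable, constant mean curvature hypersurface this close to a large centered coordinate sphere is unique --- it must coincide with the leaf $\Sigma_H$ of the Huisken--Yau type foliation of the end by stable constant mean curvature spheres that has the matching mean curvature. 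Hence two isoperimetric regions of the same large volume have the same boundary and, both containing the compact core of $M$, coincide. Since the leaves $\Sigma_H$ depend smoothly on $H$, foliate $M\setminus\Omega_0$ for a compact $\Omega_0$, and enclose $g$-volume increasing continuously and onto $[V_0,\infty)$, the family $\{\partial\Omega_V\}_{V\ge V_0}$ is exactly this smooth foliation.

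\textbf{Center of mass and main obstacle.} The rescaled $\C^{2,\alpha}$-expansion of $\partial\Omega_V=\Sigma_{H(V)}$ lets me read off its Euclidean center of mass as a function of $V$. If $(M,g)$ is also asymptotically even, the Regge--Teitelboim condition makes the $\operatorname{ADM}$ center of mass well defined, and the divergence-form Hamiltonian identities of Huisken and Yau --- revisited so that only the $\C^2$ decay assumed here is needed --- show that the center of the stable constant mean curvature foliation converges to the $\operatorname{ADM}$ center as the leaves exhaust the end; since $\partial\Omega_V$ is the corresponding leaf, its center of mass converges to the center of mass of $(M,g)$ as $V\to\infty$. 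I expect the decisive difficulty to be the curvature estimate uniform in $V$: the $\C^0$ localization alone does not prevent the isoperimetric boundary from oscillating or drifting off a \emph{centered} position, and only the stability inequality combined with a careful iteration controls this; a secondary care point is reconciling the Euclidean center of mass of $\partial\Omega_V$ with the flux-integral $\operatorname{ADM}$ center so that the two limits genuinely agree.
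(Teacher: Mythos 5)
Your overall architecture --- existence and centering via an effective Schwarzschild volume comparison, then curvature estimates and identification with a canonical constant mean curvature foliation, then the center-of-mass computation --- is the same as the paper's. The genuine gap is in the uniqueness step. You assert that a closed stable constant mean curvature hypersurface that is (after rescaling) $\C^{2,\alpha}$-close to a large centered coordinate sphere ``must coincide with the leaf $\Sigma_H$ of the Huisken--Yau type foliation with the matching mean curvature.'' That assertion is itself the hard theorem in the generality needed here: the uniqueness results of Huisken--Yau and Qing--Tian are three-dimensional and require $\C^4$ asymptotics, and local uniqueness from the implicit function theorem only excludes competitors in a small neighborhood of a given leaf, not in the whole class of graphs over $S_R$ with scale-invariant $\C^{2,\alpha}$ norm at most $\tau$. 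The paper closes this gap by (a) proving quantitative invertibility of the Jacobi operator on all such surfaces (Proposition \ref{prop:invertible}, via a Lichnerowicz eigenvalue bound), (b) running a continuity method in the metrics $g_t = t g_m + (1-t)g$ that deforms any CMC competitor to a CMC surface in exact Schwarzschild while remaining in the class $\B_{R,\tau,C}$ (Proposition \ref{prop:deformation}), and (c) invoking Brendle's Alexandrov-type theorem in Schwarzschild to conclude that the deformed surface is a centered coordinate sphere, whence the original surfaces agree by uniqueness along the deformation path. Without a substitute for (b) and (c), your uniqueness claim is unsupported.

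A secondary issue: you flag the uniform curvature estimate as the decisive difficulty but propose to obtain it from the stability inequality plus iteration, i.e., the Huisken--Yau route. With only $\C^2$ decay the troublesome terms are the covariant derivatives of ambient curvature contracted with $\tf$ in J.\ Simons' identity; these cannot be estimated pointwise without a third derivative of the metric, which is why Huisken--Yau assume $\C^4$ asymptotics. The paper handles this by integrating those terms by parts (following Metzger) inside a Stampacchia iteration, using only that the surface has constant mean curvature; volume preserving stability is then a conclusion (Corollary \ref{cor:eigenvalue}) rather than an input. As written, your sketch would implicitly require stronger asymptotics than the theorem assumes. The existence/centering stage and the center-of-mass stage of your outline do match the paper's arguments in substance.
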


Theorem \ref{thm:main} follows from Theorems \ref{thm:existence} and \ref{thm:center_of_mass}. The class of Riemannian manifolds $(M, g)$ to which Theorem \ref{thm:main} applies appears naturally in mathematical relativity as initial data for the Einstein equations. It also appears naturally in conformal geometry: If $(\bar M, \bar g)$ is a closed Riemannian manifold of positive Yamabe type and if either $3 \leq n \leq 5$ or if $(\bar M, \bar g)$ is locally conformally flat, then $(\bar M \setminus \{p\}, G^{\frac{4}{n-2}} \bar g)$ lies in this class. Here, $p \in \bar M$ and $G$ is the Green's function with pole at $p$ of the conformal Laplace operator of $g$, cf. Theorem V.3.6 in \cite{Schoen-Yau:1997} and Propositions 3.3 and 4.4 in \cite{Schoen-Yau:1988}.  

The critical points for the isoperimetric problem are exactly the constant mean curvature surfaces. Stable critical points are volume preserving stable constant mean curvature surfaces. The study of such surfaces also has a rich (if relatively recent) history. We mention in particular Alexandrov's theorem which shows that closed constant mean curvature surfaces in Euclidean space are round spheres. In their seminal paper \cite{Huisken-Yau:1996}, G. Huisken and S.-T. Yau proved that the complement of a bounded set of a three dimensional initial data set $(M, g)$ that is $\C^4$-asymptotic to Schwarzschild of mass $m>0$ is foliated by strictly volume preserving stable constant mean curvature spheres. Moreover, the leaves of this foliation are the unique volume preserving stable constant mean curvature spheres of their mean curvature within a large class of surfaces, including all nearby ones. This uniqueness has been extended to a larger class of surfaces in important work by J. Qing and G. Tian \cite{Qing-Tian:2007}. G. Huisken and S.-T. Yau have also shown in \cite{Huisken-Yau:1996} that the centers of mass of their surfaces have a limit, the ``Huisken-Yau geometric center of mass". H. Bray conjectured in \cite{Bray:1998} that the surfaces found in \cite{Huisken-Yau:1996} are in fact isoperimetric. We have confirmed this conjecture in \cite{Eichmair-Metzger:2010} by establishing an effective volume comparison result for initial data sets that are $\C^0$-asymptotic to Schwarzschild of mass $m>0$, building on H. Bray's characterization \cite{Bray:1998} of the isoperimetric regions in the exact spatial Schwarzschild geometry. We refer the reader to the introductions of \cite{Eichmair-Metzger:2010, Eichmair-Metzger:2011a} for a more extensive discussion including the physical significance of these results and further references. 

Theorem \ref{thm:main} here extends these results in several ways:
\begin{enumerate} [(i)]
\item Our result holds in all dimensions. The existence of a foliation by volume preserving stable constant mean curvature surfaces when $(M, g)$ is $\C^4$-asymptotic to Schwarzschild of mass $m\neq0$ has been shown by R. Ye in \cite{Ye:1996} in all dimensions. The proofs of the uniqueness results for large volume preserving stable constant mean curvature surfaces in \cite{Huisken-Yau:1996, Qing-Tian:2007} depend delicately on some tools that are special to three dimensional initial data sets. 
\item The uniqueness of the leaves \emph{in the class of isoperimetric surfaces} in our result is \emph{global}. The uniqueness results of \cite{Huisken-Yau:1996, Qing-Tian:2007} apply only to surfaces that lie far in the asymptotic regime of the initial data set $(M, g)$ where the geometry is close to that of the exact spatial Schwarzschild geometry. An important ingredient in our proof is the recent characterization of closed constant mean curvature surfaces in the exact spatial Schwarzschild geometry by S. Brendle \cite{Brendle:2011}. 
\item Unlike \cite{Huisken-Yau:1996}, we only require that $(M, g)$ is $\C^2$-asymptotic to Schwarzschild of mass $m>0$. To accomplish this, we rely on strong a priori position estimates for large isoperimetric regions in initial data sets that are $\C^0$-asymptotic to Schwarzschild of mass $m>0$. These estimates come from our effective volume comparison result, see Theorems \ref{thm:effective_volume_comparison} and \ref{thm:centering}. In the case $n=3$, we also rely on an idea from \cite{Metzger:2007} which in turn depends on the effective version of Schur's lemma of C. De Lellis and S. M\"uller in \cite{DeLellis-Muller:2005}. 
\item When $n=3$ and $(M, g)$ is $\C^4$-asymptotic to Schwarzschild of mass $m>0$, the convergence of the centers of mass of the leaves of the foliation was established in \cite{Huisken-Yau:1996}. L.-H. Huang showed in \cite{Huang:2009, Huang:2010} that the Huisken-Yau geometric center of mass coincides with the usual center of mass \cite{Regge-Teitelboim:1974, Beig-OMurchadha:1987}. Our proof here, which works in all dimensions, uses ideas from \cite{Huang:2009, Huang:2010}, but it is both shorter and more elementary. In particular,  we do not rely on the delicate density theorem of \cite{Huang:2010}.  
\end{enumerate}
One key ingredient in our proof of Theorem \ref{thm:main} is an all-dimensional analogue of the effective volume comparison theorem for $3$-dimensional initial data sets $(M, g)$ that are $\C^0$-asymptotic to Schwarzschild of mass $m>0$ that was obtained by the authors in \cite{Eichmair-Metzger:2010}. This result is established in Section \ref{sec:volume_comparison}. 

In \cite{Ritore:2001B}, M. Ritor\'e has shown that in a complete Riemannian surface with non-negative curvature, isoperimetric regions $\Omega_V$ exist in $M$ for every volume $V>0$. Our second main result is the existence of large isoperimetric regions in \emph{arbitrary} $3$-dimensional initial data sets with non-negative scalar curvature: 

\begin{theorem} \label{thm:isomass_exist1} Assume that $(M, g)$ is a three dimensional initial data set that has non-negative scalar curvature. There exists a sequence of isoperimetric regions $\Omega_i\subset M$ with $\mathcal{L}^3_g(\Omega_i) \to \infty$.
\end{theorem}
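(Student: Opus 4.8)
The plan is to work with the isoperimetric profile
\[
  A(V) := \inf\bigl\{ \H^2_g(\partial^*\Omega) : \Omega \subseteq M \text{ of finite perimeter}, \ \mathcal L^3_g(\Omega) = V \bigr\},
\]
and the Euclidean isoperimetric constant $\kappa := (36\pi)^{1/3}$, so that a round ball of volume $V$ in $\R^3$ has area $\kappa V^{2/3}$. First I would record three facts about $A$. (i) $A$ is continuous, $A(0)=0$, and $A(V)>0$ for $V>0$. (ii) There are $c_0,\delta>0$ with $A(V)\geq c_0 V^{2/3}$ for $0\leq V\leq\delta$: since an initial data set has bounded geometry (a fixed compact core and an asymptotically Euclidean end), one covers $M$ by geodesic balls of a uniform radius with bounded overlap, applies the relative isoperimetric inequality on each (legitimate once $\delta$ is small compared to the volumes of these balls), and sums using $\sum_j a_j^{2/3}\geq(\sum_j a_j)^{2/3}$. (iii) $A(V)\leq\kappa V^{2/3}$ for every $V$: pushing a round Euclidean ball of volume $V$ far out into the end, where $g$ is close to the Euclidean metric, its $g$-area tends to $\kappa V^{2/3}$.

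Next I would set up the concentration--compactness structure of minimizing sequences, standard in this context (cf.\ \cite{Ritore:2001B, Eichmair-Metzger:2010} and the references there). Fix $V>0$ and a minimizing sequence $\Omega_i$, $\mathcal L^3_g(\Omega_i)=V$, $\H^2_g(\partial^*\Omega_i)\to A(V)$. By $\mathrm{BV}$-compactness, after passing to a subsequence $\Omega_i\to\Omega_\infty$ in $L^1_{\mathrm{loc}}$ with $\mathcal L^3_g(\Omega_\infty)=V-\mu$, where $\mu\in[0,V]$ is the volume lost into the end. Lower semicontinuity of perimeter on compact sets together with the (nearly Euclidean) isoperimetric inequality applied to the portion of $\Omega_i$ escaping to infinity yields
\[
  A(V)\ \geq\ \H^2_g(\partial^*\Omega_\infty)+\kappa\,\mu^{2/3}\ \geq\ A(V-\mu)+\kappa\,\mu^{2/3},
\]
while gluing a near-optimal region of volume $W$ to a far-away Euclidean ball of volume $V-W$ gives $A(V)\leq A(W)+\kappa(V-W)^{2/3}$ for every $0\leq W\leq V$ (the case $W=0$ is (iii)). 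Hence $A(V)=A(V-\mu)+\kappa\mu^{2/3}$ and $\Omega_\infty$ realizes $A(V-\mu)$. Consequently, if $\mu=0$ then $\Omega_\infty$ is an isoperimetric region of volume $V$; if $0<\mu<V$ then there is an isoperimetric region of volume $V-\mu\in(0,V)$; the only remaining case is $\mu=V$, i.e.\ $A(V)=\kappa V^{2/3}$ with all volume escaping.

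For the main argument, let $\mathcal V:=\{V\geq 0:\text{an isoperimetric region of volume }V\text{ exists}\}$; it suffices to show $\mathcal V$ is unbounded, so assume $\mathcal V\subseteq[0,V^\dagger]$. Let $V\to\infty$ and apply the previous step. If, for some such $V$, a minimizing sequence has $\mu<V$, then there is an isoperimetric region of volume $V_0:=V-\mu\in(0,V)$, so $V_0\in\mathcal V$ and hence $V_0\leq V^\dagger$; then by (iii) and concavity of $t\mapsto t^{2/3}$,
\[
  A(V_0)=A(V)-\kappa(V-V_0)^{2/3}\leq\kappa\bigl(V^{2/3}-(V-V_0)^{2/3}\bigr)\leq\tfrac{2\kappa}{3}\,V^\dagger(V-V^\dagger)^{-1/3}\longrightarrow 0,
\]
so by (i) we get $V_0\to 0$, in particular $V_0<\delta$ for $V$ large; then (ii) gives $c_0V_0^{2/3}\leq A(V_0)\leq\tfrac{2\kappa}{3}(V-V_0)^{-1/3}V_0$, that is $V-V_0\leq(2\kappa/3c_0)^3V_0\to 0$, contradicting $V\to\infty$. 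Thus for all large $V$ every minimizing sequence must lose all of its volume, i.e.\ $A(V)=\kappa V^{2/3}$ for all large $V$. But by the positive mass theorem $(M,g)$ is either isometric to $(\R^3,\delta)$ --- where round balls of every volume are isoperimetric, so $\mathcal V=[0,\infty)$, a contradiction --- or has positive ADM mass, in which case a direct computation with the asymptotically flat coordinate spheres $S_r=\partial B_r$ shows $\H^2_g(S_r)<\kappa\,\mathcal L^3_g(B_r)^{2/3}$ for all large $r$, hence $A(V)<\kappa V^{2/3}$ for all large $V$ --- again a contradiction. Therefore $\mathcal V$ is unbounded; choosing $V_i\in\mathcal V$ with $V_i\to\infty$ and isoperimetric regions $\Omega_i$ of volume $V_i$ proves the theorem. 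I expect the main obstacle to be the concentration--compactness structure of minimizing sequences, specifically the precise accounting of the volume and perimeter that escapes into the asymptotically flat end (the relative isoperimetric inequality at infinity, and ruling out degenerate spreading of a minimizing sequence); this is where asymptotic flatness is essential, whereas $R_g\geq 0$ enters only in the final step, through the positive mass theorem, to rule out the borderline profile $A(V)\equiv\kappa V^{2/3}$ near infinity.
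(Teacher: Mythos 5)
Your strategy is sound and rests on the same two pillars as the paper's proof: the Ritor\'e--Rosales-type structure of minimizing sequences (the paper's Proposition \ref{prop:cut_and_paste_for_minimizing_sequence}, which is exactly your identity $A(V)=A(V-\mu)+\kappa\,\mu^{2/3}$ with the residual region isoperimetric for its own volume), and a positivity-of-mass input forcing $A(V)<\kappa V^{2/3}$ for large $V$ unless $(M,g)$ is flat $\R^3$. The organization differs, though. The paper packages the second input as G. Huisken's theorem that the isoperimetric mass is non-negative, with equality only for flat $\R^3$. Once $\tilde m_{\mathrm{iso}}>0$, it reads off the conclusion quantitatively and without any contradiction argument: since the escaping Euclidean ball contributes a non-positive amount to the deficit $V-\tfrac{1}{6\sqrt\pi}A_g(V)^{3/2}$, the residual region must carry all of it, giving $\CL^3_g(\Omega)\geq\tfrac14\tilde m_{\mathrm{iso}}\,A_g(V)\to\infty$ along a sequence $V\to\infty$. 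Your detour through the set $\mathcal V$ of achieved volumes and the concavity estimate is correct (your lower bound (ii) is the paper's Lemma \ref{lem:crudeisoperimetricinequality} and in fact holds for all $V$), but it is longer than necessary and yields no rate.

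The one step you should not call ``a direct computation'' is the last one. In Theorem \ref{thm:isomass_exist1} the metric is only an initial data set in the sense of Definition \ref{def:initial_data_sets}: $g_{ij}-\delta_{ij}=O(r^{-1})$ and $\Scal=O(r^{-3})$, with no asymptotically Schwarzschild structure to expand against. In particular $\Scal$ need not be integrable, so the ADM mass need not even be well defined (it may be $+\infty$), and the statement that positive mass makes coordinate balls beat the Euclidean profile, i.e. $\CL^3_g(B_r)-\tfrac{1}{6\sqrt\pi}\H^2_g(S_r)^{3/2}\sim\tfrac12 m_{\mathrm{ADM}}\H^2_g(S_r)$, is the theorem of Fan--Miao--Shi--Tam \cite{Fan-Shi-Tam:2009}, obtained by a genuine integration-by-parts argument. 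This is precisely why the paper phrases the positivity input in terms of the isoperimetric mass rather than the ADM mass. With that substitution --- or with a citation to \cite{Fan-Shi-Tam:2009} together with a treatment of the case $\Scal\notin L^1$ --- your argument closes.
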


The proof of Theorem \ref{thm:isomass_exist1}, which is given in Section \ref{sec:isomass}, is indirect and uses recent deep insights of G. Huisken's on the isoperimetric mass of initial data sets. Note that our theorem implies in particular that $(M, g)$ contains large volume preserving stable constant mean curvature surfaces. Using arguments as for example in \cite{Eichmair-Metzger:2011a} it follows that appropriate homothetic rescalings of these large isoperimetric regions to a fixed volume are close to coordinate balls. The existence of such surfaces in this generality seems to lie deep and out of reach of e.g. implicit-function type arguments. 

 \subsection*{Acknowledgements} We are very grateful to Hubert Bray,
 Simon Brendle, Gerhard Huisken, Manuel Ritor\'e, Brian White, and
 Shing-Tung Yau for useful conversations, encouragement, and
 support. We also thank the referees for their careful reading and valuable comments. Michael Eichmair gratefully acknowledges the support of  NSF
 grant DMS-0906038 and of SNF grant 2-77348-12. Also, Michael Eichmair
 wishes to express his sincere gratitude to Christina Buchmann,
 Katharina Halter, Madeleine Luethy, Alexandra Mandoki, Anna and Lisa
 Menet, Martine Verwey, Markus Weiss, and his wonderful colleagues in Group 6 at
 ETH for making him feel welcome and at home in Z\"urich right from
 the start.  


\section{Definitions and Notation}
\label{sec:definitions-notation}

\begin{definition}
  \label{def:initial_data_sets} Let $n \geq 3$. 
  An initial data set $(M, g)$ is a connected complete boundaryless $n$-dimensional
  Riemannian manifold such that there exists a bounded open set $U
  \subset M$ so that $M \setminus U
  \cong_x \R^n \setminus B_{\frac{1}{2}}(0)$, and such that in the
  coordinates induced by $x = (x_1, \ldots, x_n)$ we have that
  \begin{equation*}
    r |g_{ij} -
    \delta_{ij}| + r^2 |\partial_k g_{ij}| + r^{3} |\partial^2_{k l}
    g_{ij}| \leq C \text { for all } r \geq 1, 
  \end{equation*}
  where $r:= \sqrt {\sum_{i=1}^n x_i^2}$. 
  
  Given $m > 0$, $\gamma \in (0, 1]$, and an integer
  $k\geq 0$, we say that an initial data set is $\C^k$-asymptotic to
  Schwarzschild of mass $m>0$ at rate $\gamma$ if
  \begin{equation*}
    \sum_{l=0}^k  r^{n - 2 +\gamma + l}|\partial^l(g - g_m)_{ij}| \leq C \text { for all } r \geq 1, 
  \end{equation*}
  where $(g_m)_{ij} = (1 + \frac{m}{2|x|^{n-2}})^{\frac{4}{n-2}}
\delta_{ij}$. 

We say that an initial data set $(M, g)$ that is $\C^2$-asymptotic to Schwarzschild of mass $m>0$ at rate $\gamma$ is asymptotically even, if 
  \begin{equation*}
    r^{n+1+\gamma} |\Scal (x) - \Scal (-x)|
    \leq
    C \text{ for all } x \text{ with } |x| =r \geq \frac{1}{2}.  
  \end{equation*}
Here, $\Scal $ is the scalar curvature of $g$.
\end{definition}

We extend $r$ as a smooth regular function to the entire initial data
set $(M, g)$ such that $r(U) \subset [0, 1)$. We use $S_r$, $B_r$ to
denote the surface $\{ x \in M : |x| = r\}$ and the region $\{ x \in M
: |x| \leq r\}$ in $M$ respectively.  We will not distinguish between
the end $M \setminus U$ of $M$ and its image $\R^n \setminus
B_{\frac{1}{2}} (0)$ under $x$.

If $\Omega \subset M$ is Borel and has locally finite perimeter, then its reduced boundary in
$(M, g)$ is denoted by $\partial^* \Omega$.
 
\begin{definition}
  \label{defn:isoprofile}
  The isoperimetric area function $A_g : [0, \infty) \to [0, \infty)$
  is defined by
  \begin{equation*}
    \label{eqn:isoperimetric_area_function}
    \begin{split}
      A_g(V) &:=  \inf \{\H^{n-1}_g(\partial^* \Omega) : \Omega \subset M
      \text{ is Borel, has finite perimeter, and } \CL^n_g(\Omega) = V \}.
    \end{split}
  \end{equation*} 
  A Borel set $\Omega \subset M$ with finite perimeter such that
  $\CL^n_{g} (\Omega) = V $ and $A_g(V) = \H^{n-1}_{g} (\partial^*
  \Omega)$ is called an isoperimetric region of $(M, g)$ of volume
  $V$.
\end{definition}


\section{A refinement of Bray's isoperimetric comparison theorem for Schwarzschild in all dimensions}
\label{sec:volume_comparison}

Throughout this section we will use the notation for the spatial Schwarzschild manifold of mass $m>0$,  $$(\R^n \setminus \{0\}, g_m := \left( 1 + \frac{m}{2|x|^{n-2}}\right)^{\frac{4}{n-2}} \sum_{i=1}^n dx_i^2),$$ set forth in Appendix \ref{sec:geometrySchwarzschild}. 

In his thesis \cite{Bray:1998}, H. Bray has proven that the centered coordinate spheres in the three-dimensional Schwarzschild manifold of mass $m>0$ are isoperimetric. His argument also applies to compact perturbations of Schwarzschild provided the coordinate spheres are sufficiently large. The following proposition follows from straightforward modifications of H. Bray's original, three dimensional arguments in \cite{Bray:1998}, see also \cite[Section 3]{Eichmair-Metzger:2010}. The proof uses the analogue of the Hawking mass for rotationally symmetric Riemannian manifolds in higher dimensions, which we review in Appendix \ref{sec:Hawking}. See also \cite{Bray-Morgan:2002, Corvino-Gerek-Greenberg-Krummel:2007, Abedin-Corvino-Kapita-Wu:2009} for other extensions of the results in H. Bray's thesis. 

\begin{proposition} [Bray's volume preserving charts in higher dimension] Let $m>0$ and $r>r_h$ be given. There exist $\alpha \in (0, 1)$, $c >0$, $s_0 \in [0, c)$, $u_c \in \C^{1, 1} ((0, \infty))$, and $w_c \in \C^{1, 1} ((s_0, \infty))$ with the following properties: 
\begin{enumerate} [(a)]
\item The sphere $\{c\} \times \S^{n-1}$ in the metric cone $((0, \infty) \times \S^{n-1}, \alpha^{-2} ds^2 + \alpha^{\frac{2}{n-1}}s^2 g_{\S^{n-1}})$ has the same area and the same (positive) mean curvature as $S_r = S_r(0) \subset \R^n$ with respect to $g_m$. 
\item $u_c (s) = \alpha$ for $s \leq c$. $u_c$ is a smooth non-decreasing function on $(c, \infty)$ and $\lim_{s \to \infty} u_c(s) = 1$. The derivative of $u_c$ at $s = c$ exists and equals $0$.
\item $w_c \geq 1$, $w_c(s) \equiv 1$ for $s \geq c$, the derivative of $w_c$ at $s = c$ exists and is $0$, $\lim_{s \searrow s_0} w_c(s) = \infty$, and $w_c$ is smooth on $(s_0, c)$. 
\item Define the metric $g_m^c:= u_c^{-2} ds^2 + u_c^{\frac{2}{n-1}} s^2 g_{\S^{n-1}}$ on $(s_0, \infty) \times \S^{n-1}$. Then $((s_0, \infty) \times \S^{n-1}, w_c^{\frac{4}{n-2}} g_m^c)$ is isometric to $(\R^n \setminus \{0\}, g_m)$ under a rotationally invariant map that sends $\{c\} \times \S^{n-1}$ to $S_r(0)$. 
\item As quadratic forms, we have that $\alpha^2 g_m^c \leq  ds^2 + s^2 g_{\S^{n-1}} \leq u_c^{-\frac{2}{n-1}} g_{m}^c$ on $(s_0, \infty) \times \S^{n-1}$. 
\end{enumerate}
\end{proposition}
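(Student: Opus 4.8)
The plan is to construct the charts by first understanding the rotationally symmetric geometry of $(\R^n\setminus\{0\}, g_m)$ via its Hawking-type mass, then building the metric cone in (a), then extending $u_c$ and $w_c$ outward and inward respectively so that the glued metrics match to first order across $\{c\}\times\S^{n-1}$ and reproduce $g_m$ globally. I write $g_m = \phi^{\frac{4}{n-2}}\delta$ with $\phi = 1 + \tfrac{m}{2|x|^{n-2}}$, and I parametrize centered spheres by their $g_m$-area radius $\rho(|x|)$, i.e. $\area_{g_m}(S_{|x|}) = \omega_{n-1}\rho^{n-1}$, so that $\rho = \phi^{\frac{2}{n-2}}|x|$. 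In these area-radius coordinates $g_m$ takes the warped-product form $g_m = V(\rho)^{-1}d\rho^2 + \rho^2 g_{\S^{n-1}}$ for an explicit function $V(\rho)$ determined by the higher-dimensional Hawking mass being constantly equal to $m$ (this is the content of Appendix~\ref{sec:Hawking}); concretely $V(\rho) = 1 - (2m/\rho^{n-2})\big(1+o(1)\big)$ is positive for $\rho > r_h$ and vanishes at the horizon. The mean curvature of the sphere of area radius $\rho$ is $H(\rho) = (n-1)\sqrt{V(\rho)}/\rho$.

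Next I set up the cone in item (a). The metric cone $\alpha^{-2}ds^2 + \alpha^{\frac{2}{n-1}}s^2 g_{\S^{n-1}}$ has, on its sphere $\{s\}$, area radius $\alpha^{\frac{1}{n-1}}s$ and mean curvature $(n-1)\alpha / (\alpha^{\frac{1}{n-1}}s) = (n-1)\alpha^{\frac{n-2}{n-1}}/s$. Matching area and mean curvature with $S_r$ in $g_m$ amounts to the two equations $\alpha^{\frac{1}{n-1}}c = \rho(r)$ and $(n-1)\alpha^{\frac{n-2}{n-1}}/c = H(\rho(r))$; dividing gives $\alpha = \sqrt{V(\rho(r))} \in (0,1)$, and then $c = \rho(r)\alpha^{-\frac{1}{n-1}}$ is determined. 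Note $\alpha<1$ exactly because $V<1$ on a Schwarzschild end of positive mass, and $\alpha>0$ because $r>r_h$. This fixes $\alpha,c$ once and for all.

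Now I define $g_m^c = u_c^{-2}ds^2 + u_c^{\frac{2}{n-1}}s^2 g_{\S^{n-1}}$ and arrange $w_c^{\frac 4{n-2}}g_m^c \cong g_m$ as in (d). The idea is to set, for $s \ge c$, $u_c(s)$ and $w_c(s)\equiv 1$ so that $g_m^c = w_c^{\frac 4{n-2}}g_m^c$ equals $g_m$ on the exterior region $\{\rho \ge \rho(r)\}$; concretely one reads off $u_c$ from the requirement that the area radius $u_c(s)^{\frac 1{n-1}}s$ equal $\rho$ and that the $d\rho$-part match, which yields an ODE for $u_c$ whose solution is non-decreasing with $u_c(c) = \alpha$, $u_c \to 1$ as $s\to\infty$, and — crucially — $u_c'(c)=0$ because $V'$ contributes at the matched point in a way that cancels (this is where the area/mean-curvature matching of (a) is used: the cone is the unique rotationally symmetric metric agreeing with $g_m$ to first order at the sphere while being scale-invariant, so the gluing is $C^{1,1}$). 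For $s<c$ one keeps $u_c\equiv\alpha$, making the interior an exact cone, and absorbs the remaining discrepancy with $g_m$ near the horizon into the conformal factor $w_c$: solving the corresponding ODE on $(s_0,c)$ gives $w_c>1$, $w_c(c)=1$, $w_c'(c)=0$, $w_c\to\infty$ as $s\searrow s_0$ (reflecting the horizon/puncture), with $s_0\in[0,c)$. The $C^{1,1}$ (rather than smooth) regularity is exactly the price of matching a scale-invariant cone to $g_m$ only to first order at $\{c\}\times\S^{n-1}$.

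Finally, item (e): the inequalities $\alpha^2 g_m^c \le ds^2 + s^2 g_{\S^{n-1}} \le u_c^{-\frac 2{n-1}}g_m^c$ are a pointwise comparison of the two quadratic forms. On the $\S^{n-1}$-factor one needs $\alpha^2 u_c^{\frac 2{n-1}}s^2 \le s^2 \le u_c^{-\frac 2{n-1}}\cdot u_c^{\frac 2{n-1}}s^2 = s^2$; the right inequality is an equality and the left holds since $\alpha^2 u_c^{\frac 2{n-1}} \le \alpha^2 \cdot 1 \le 1$ using $u_c \le 1$ and $\alpha<1$. On the $ds^2$-factor one needs $\alpha^2 u_c^{-2} \le 1 \le u_c^{-\frac 2{n-1}}u_c^{-2}$; the left inequality is $\alpha^2 \le u_c^2$, i.e. $\alpha \le u_c$, which holds because $u_c$ is non-decreasing from the value $\alpha$, and the right inequality holds since $u_c \le 1$. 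So (e) follows formally once (b) is in hand.

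\textbf{Main obstacle.} The delicate point is the gluing: producing $u_c$ and $w_c$ with vanishing derivative at $s=c$ so that both $g_m^c$ and $w_c^{\frac 4{n-2}}g_m^c$ are genuinely $C^{1,1}$ across the matching sphere, and simultaneously ensuring $u_c^{\frac 4{n-2}}\!\cdot$(exterior part) reconstructs $g_m$ on the nose while the conformal correction $w_c$ on the interior stays $\ge 1$ and blows up at the right place. This is precisely the step where H.\ Bray's original three-dimensional construction must be re-examined dimension by dimension; one checks that the ODEs governing $u_c$ and $w_c$ have the stated qualitative behaviour using the explicit form of $V(\rho)$ and the higher-dimensional Hawking mass identity from Appendix~\ref{sec:Hawking}, exactly as indicated by the reference to \cite[Section~3]{Eichmair-Metzger:2010}. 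Everything else — items (a) and (e) — is elementary algebra once this construction is carried out.
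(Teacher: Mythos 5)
Your overall strategy --- fix $(\alpha,c)$ by matching area and mean curvature of $\{c\}\times\S^{n-1}$ with $S_r$, take the exterior $s\ge c$ to be Schwarzschild written in the Euclidean-volume gauge with $w_c\equiv 1$, take the interior to be the exact cone and put the entire discrepancy into a conformal factor $w_c$ solving the scalar-flatness ODE --- is exactly Bray's construction, which is all the paper invokes (it gives no proof beyond the citation). But there is a concrete computational error that propagates. The mean curvature of $\{s\}\times\S^{n-1}$ in the cone metric $\alpha^{-2}ds^2+\alpha^{\frac{2}{n-1}}s^2 g_{\S^{n-1}}$ is $(n-1)\alpha/s$, not $(n-1)\alpha^{\frac{n-2}{n-1}}/s$: the unit normal is $\alpha\,\partial_s$ and the warping function is $\alpha^{\frac{1}{n-1}}s$, so $H=(n-1)\,\alpha\,\tfrac{d}{ds}\log(\alpha^{\frac{1}{n-1}}s)=(n-1)\alpha/s$. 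In terms of the area radius $\rho=\alpha^{\frac{1}{n-1}}s$ this reads $H=(n-1)\alpha^{\frac{n}{n-1}}/\rho$, so the two matching equations give $\alpha^{\frac{n}{n-1}}=\sqrt{V(\rho(r))}$, i.e.\ $\alpha=V^{\frac{n-1}{2n}}$, not $\alpha=\sqrt{V}$. You can sanity-check this against Proposition \ref{prop:propertiesu}(b): with $V=1-2m/\rho^{n-2}$ the correct relation gives $\alpha=1-\tfrac{n-1}{n}\tfrac{m}{r^{n-2}}+O(r^{4-2n})$, whereas your value would give $1-\tfrac{m}{r^{n-2}}+\dots$.

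This is not merely a constant being off. With your $\alpha$ the cone does \emph{not} have the same mean curvature as $S_r$, so (a) fails as stated, and the cancellation you need for $u_c'(c)=0$ fails as well. Explicitly, on the exterior the radial matching reads $d\rho/ds=\sqrt{V}/u_c$ while the angular matching reads $\rho=u_c^{\frac{1}{n-1}}s$; differentiating the latter and evaluating at $s=c$ shows $u_c'(c)=0$ precisely when $\sqrt{V(\rho(r))}/\alpha=\alpha^{\frac{1}{n-1}}$, i.e.\ when $\alpha^{\frac{n}{n-1}}=\sqrt{V}$. With $\alpha=\sqrt{V}$ one instead gets $u_c'(c^+)>0$, the glued $u_c$ is not differentiable at $c$, and $g_m^c$ is only Lipschitz across $\{c\}\times\S^{n-1}$ --- so (b), (c), (d) break down. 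Once $\alpha$ is corrected, the rest of your outline is sound; in particular your derivation of (e) from $\alpha\le u_c\le 1$ is correct. The remaining qualitative assertions (monotonicity of $u_c$ and $u_c\to 1$, $w_c\ge 1$ and its blow-up at $s_0$) are asserted rather than proved, but that is consistent with the level of detail of the source being reproduced; they do require the ODE analysis you flag as the main obstacle.
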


The additional observations about $\alpha, c$, and $u_c$ in the following proposition, in particular (\ref{u(tc)-u(c)}), are the key to making H. Bray's characterization of isoperimetric regions in Schwarzschild into an effective volume comparison theorem. We omit the proofs, which are simple adaptations of the arguments in \cite{Eichmair-Metzger:2010}. 

\begin{proposition} [\protect{Cf. \cite[Section 3]{Eichmair-Metzger:2010}}]\label{prop:propertiesu} We have that
\begin{enumerate} [(a)]
\item $c^n = r^n \left(1 +  \frac{2n-2}{n-2}\frac{m}{r^{n-2}}+O\left(\frac{1}{r^{2n-4}}\right)\right)$
\item $\alpha = 1 - \frac{n-1}{n} \frac{m}{r^{n-2}} + O\left(\frac{1}{r^{2n-4}}\right)$
\item The scalar curvature of the conical metric $((0, \infty) \times \S^{n-1}, \alpha^{-2} ds^2 + \alpha^{\frac{2}{n-1}}s^2 g_{\S^{n-1}})$ equals $(n-1)(n-2) s^{-2} (\alpha^{- \frac{2}{n-1}} - \alpha^2)$. 
\item The Schwarzschild volume between $S_r$ and the horizon $S_{r_h}$ is greater than the volume of $(0, c] \times \S^{n-1}$ with respect to the metric $\alpha^{-2} ds^2 + \alpha^{\frac{2}{n-1}}s^2 g_{\S^{n-1}}$. The difference is 
 $V_0
  =
  \frac{\omega_{n-1} r^n}{n} \frac{n-2}{2} \frac{m}{r^{n-2}} + O\left(\frac{1}{r^{n-4}}\right)
  =
  \frac{\omega_{n-1} c^n}{n} \frac{n-2}{2} \frac{m}{c^{n-2}} + O\left(\frac{1}{c^{n-4}}\right)$. 
\item \label{u(tc)-u(c)} Fix $\tau_0 >1$ and let $\tau \geq \tau_0$. Then 
\begin{eqnarray*}
u_c (\tau c ) - u_c (c) &=& u_c(\tau c) - \alpha \\ &=& \frac{(n-1)m}{2 n c^{n-2} \tau^n} (2 \tau^n - n \tau^2 + (n-2)) + O\left(\frac{1}{c^{2n-4}}\right) \\ &\geq& \delta \frac{m}{c^{n-2}} (1 - \frac{1}{\tau})^2
\end{eqnarray*}
provided that $c$ is sufficiently large (depending only on $m$ and $\tau_0$), and where $\delta>0$ is a constant depending only on $n$. 
\end{enumerate}
\end{proposition}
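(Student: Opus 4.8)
The plan is to derive all five statements from the explicit description of the volume-preserving chart in the previous proposition together with the known expansions of the geometric quantities in the Schwarzschild metric $g_m$ as functions of the coordinate radius $r$. The starting point is that $\{c\}\times\S^{n-1}$ in the cone $\alpha^{-2}ds^2+\alpha^{2/(n-1)}s^2 g_{\S^{n-1}}$ matches $S_r\subset(\R^n,g_m)$ in both area and mean curvature, so $\alpha$ and $c$ are implicitly determined by two equations. Concretely, writing $\phi(r)=(1+\tfrac{m}{2r^{n-2}})^{2/(n-2)}$ for the conformal factor, the area of $S_r$ is $\omega_{n-1}(r\phi(r))^{n-1}$ and its mean curvature is an explicit rational expression in $r$ and $\phi$; on the cone side the area of $\{c\}\times\S^{n-1}$ is $\omega_{n-1}\alpha\, c^{n-1}$ and the mean curvature is $(n-1)\alpha/c$ up to the $\alpha^{1/(n-1)}$ factors. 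First I would set up these two matching identities precisely, expand $\phi(r)=1+\tfrac{m}{(n-2)r^{n-2}}+O(r^{-2(n-2)})$, and invert to get the expansions (a) for $c^n$ and (b) for $\alpha$ in powers of $r^{-(n-2)}$; this is a routine but careful Taylor-series computation, and the only subtlety is bookkeeping the $O(r^{-2n+4})$ error terms consistently, which is exactly what \cite[Section 3]{Eichmair-Metzger:2010} does in dimension three.

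Part (c) is a direct computation: for a warped cone $\alpha^{-2}ds^2+\alpha^{2/(n-1)}s^2 g_{\S^{n-1}}$ the scalar curvature is computed from the standard formula for the scalar curvature of a metric of the form $f(s)^2 ds^2 + h(s)^2 g_{\S^{n-1}}$; here $f\equiv\alpha^{-1}$ and $h(s)=\alpha^{1/(n-1)}s$ are constant-times-$s$, so the $h''$ terms vanish and one is left with the term coming from the curvature of the link $\S^{n-1}$, which has sectional curvature $1$, scaled by $h^{-2}$ and corrected by the conformal-to-cone factor $|h'|^2/f^{-2}\cdot f^{2}$ — working this out gives $(n-1)(n-2)s^{-2}(\alpha^{-2/(n-1)}-\alpha^2)$. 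For part (d) I would write the Schwarzschild volume of the region between $S_{r_h}$ and $S_r$ as $\int_{r_h}^{r}\omega_{n-1}(t\phi(t))^{n-1}\phi(t)\,dt$ and the cone volume of $(0,c]\times\S^{n-1}$ as $\omega_{n-1}\alpha\,c^n/n$; substituting the expansions (a) and (b) and integrating term by term yields the claimed value of $V_0$, with the two stated forms (in $r$ and in $c$) interchangeable via (a). The positivity of $V_0$ for large $r$ is immediate from the leading term $\tfrac{\omega_{n-1}r^n}{n}\cdot\tfrac{n-2}{2}\cdot\tfrac{m}{r^{n-2}}>0$.

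Part (e) is the substantive one and the main obstacle. Here one needs the ODE that governs $u_c$ on $(c,\infty)$: it comes from requiring that $w_c^{4/(n-2)}g_m^c$ be isometric to $g_m$, which on $(c,\infty)$ (where $w_c\equiv 1$) forces $g_m^c = u_c^{-2}ds^2 + u_c^{2/(n-1)}s^2 g_{\S^{n-1}}$ itself to be isometric to Schwarzschild; matching this to $g_m$ determines $u_c(s)$ implicitly through the change of radial variable. Differentiating the area-matching relation $u_c(s)^{1/(n-1)}s = \rho(s)\phi(\rho(s))$, where $\rho(s)$ is the $g_m$-coordinate radius corresponding to cone-radius $s$, and using the mass-aspect (Hawking-mass) identity from Appendix \ref{sec:Hawking} to control $u_c'$, one obtains $u_c(s)$ as an explicit function; then $u_c(\tau c)-\alpha$ is computed by Taylor-expanding in $1/c$. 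The delicate point is getting the exact polynomial $2\tau^n - n\tau^2+(n-2)$ with the right constant $\tfrac{(n-1)m}{2nc^{n-2}\tau^n}$, and then checking that this polynomial is bounded below by a dimensional constant times $\tau^n(1-\tfrac1\tau)^2$ for $\tau\ge\tau_0>1$. For the latter I would factor: at $\tau=1$ the polynomial $p(\tau)=2\tau^n-n\tau^2+(n-2)$ vanishes to second order (since $p(1)=0$ and $p'(1)=2n-2n=0$), so $p(\tau)=(\tau-1)^2 q(\tau)$ with $q$ a polynomial having positive leading coefficient $2$ and $q(1)=p''(1)/2 = (n(n-1)-n) = n(n-2)>0$; a short argument shows $q(\tau)\ge \delta'$ for $\tau\ge 1$ with $\delta'$ dimensional, giving $p(\tau)/\tau^n \ge \delta'(1-\tfrac1\tau)^2$ and hence the stated bound after absorbing constants. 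Throughout, the error term $O(c^{-(2n-4)})$ must be shown uniform for $\tau\ge\tau_0$, which is where one uses that $u_c$ is monotone and bounded between $\alpha$ and $1$, so the higher-order terms in the expansion are dominated uniformly in $\tau$; this uniformity, rather than any single identity, is the real technical content, and it is handled exactly as in \cite{Eichmair-Metzger:2010}.
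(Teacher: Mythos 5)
The paper itself omits the proof of this proposition, stating only that it is a ``simple adaptation'' of the three--dimensional computation in \cite{Eichmair-Metzger:2010}; your strategy --- deriving $\alpha$ and $c$ from the two matching conditions (area and mean curvature of $\{c\}\times\S^{n-1}$ in the cone versus $S_r$ in $g_m$), computing the cone scalar curvature from the warped--product formula, and expanding $u_c$ via the radial reparametrization --- is exactly that adaptation. Parts (a), (b), (c) are set up correctly (for the record, the mean curvature of $\{c\}\times\S^{n-1}$ in the cone is exactly $(n-1)\alpha/c$, with no residual $\alpha^{1/(n-1)}$ factors), and your plan for (e) is sound: the factorization $p(\tau)=2\tau^n-n\tau^2+(n-2)=(\tau-1)^2q(\tau)$ with $p(1)=p'(1)=0$, $q(1)=n(n-2)>0$, and $p'(\tau)=2n\tau(\tau^{n-2}-1)>0$ on $(1,\infty)$ is the right route to the lower bound. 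One small repair there: to conclude $p(\tau)/\tau^n\geq \delta'(1-\tfrac1\tau)^2$ you need $q(\tau)\geq\delta'\tau^{n-2}$, not merely $q(\tau)\geq\delta'$; this does hold because $q$ has degree $n-2$, leading coefficient $2$, and no zeros on $[1,\infty)$, so $q(\tau)/\tau^{n-2}$ is continuous, positive, and tends to $2$.

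There is, however, a genuine error in your treatment of (d): the volume of $(0,c]\times\S^{n-1}$ with respect to $\alpha^{-2}ds^2+\alpha^{2/(n-1)}s^2g_{\S^{n-1}}$ is $\omega_{n-1}c^n/n$, \emph{not} $\omega_{n-1}\alpha c^n/n$. The volume form is $(\alpha^{-2})^{1/2}(\alpha^{2/(n-1)})^{(n-1)/2}s^{n-1}\,ds\,d\mu_{\S^{n-1}}=s^{n-1}\,ds\,d\mu_{\S^{n-1}}$: the $\alpha^{-1}$ from the radial line element cancels the $\alpha$ from the spherical slices. (You integrated the slice areas $\omega_{n-1}\alpha s^{n-1}$ against $ds$ instead of against the radial length element $\alpha^{-1}ds$. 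The exponents in $u^{-2}ds^2+u^{2/(n-1)}s^2g_{\S^{n-1}}$ are chosen precisely so that the volume form is Euclidean for \emph{any} $u$ --- this is why these are called volume--preserving charts, and the same fact is what lets you pin down $\rho(s)$ in part (e) by matching $\tfrac{\omega_{n-1}}{n}(s^n-c^n)$ with the $g_m$--volume between $S_r$ and $S_{\rho(s)}$.) The discrepancy is not lower order: $(1-\alpha)\omega_{n-1}c^n/n=\tfrac{n-1}{n^2}\omega_{n-1}mr^2+\dots$ is of the same order as $V_0$ itself. Concretely, the Schwarzschild volume is $\tfrac{\omega_{n-1}r^n}{n}+\tfrac{n\,\omega_{n-1}mr^2}{2(n-2)}+\dots$ and, by (a), $\tfrac{\omega_{n-1}c^n}{n}=\tfrac{\omega_{n-1}r^n}{n}+\tfrac{(2n-2)\omega_{n-1}mr^2}{n(n-2)}+\dots$, whose difference has the stated coefficient $\tfrac{n-2}{2n}$; your formula would instead produce $\tfrac{n^2-2}{2n^2}$, contradicting the claim. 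Finally, note that in (e) the key expansion $u_c(\tau c)-\alpha=\tfrac{(n-1)m}{2nc^{n-2}\tau^n}(2\tau^n-n\tau^2+(n-2))+O(c^{4-2n})$ is asserted rather than derived; it does come out of the volume--plus--area matching described above, but that computation still needs to be carried out.
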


With these preparations, it is a simple matter to carry over the derivation of the effective volume comparison result \cite[Proposition 3.3]{Eichmair-Metzger:2010} to arbitrary dimensions. The result is based on the concept introduced in the following definition: 

\begin{definition} [\protect{Cf. \cite[Definition 3.2]{Eichmair-Metzger:2010}}]
  \label{def: off-center} Let $(M, g)$ be an initial data set that is
$\C^0$-asymptotic to Schwarzschild of mass $m >0$. Let $\Omega$ be a
bounded Borel set with finite perimeter in $(M, g)$. Given parameters $\tau >1$ and $\eta \in (0, 1)$ we say that such
a set $\Omega$ is $(\tau, \eta)$-off-center if
  \begin{enumerate} [(i)]
  \item $\CL^n_g (\Omega)$ is so large that there exists a coordinate
sphere $S_r = \partial B_r$ with $\CL^n_g(\Omega) = \CL^n_g(B_r)$ and $r \geq
1$, and if
  \item $\H^{n-1}_g (\partial^* \Omega \setminus B_{\tau r}) \geq \eta \H^{n-1}_g
(S_r)$.
  \end{enumerate}
\end{definition}

There are several measures of asymmetry in the literature that lead to effective versions of the classical isoperimetric inequality in Euclidean space, cf. Appendix \ref{sec:effectiveiso}. The following effective version of Bray's characterization of the isoperimetric regions in Schwarzschild is not a consequence of an effective isoperimetric inequality in Euclidean space; it depends on the positivity of the mass in a crucial way. 

\begin{proposition} [Effective Volume Comparison in Schwarzschild,
  cf. \protect{\cite[Proposition 3.3]{Eichmair-Metzger:2010}}] \label{prop:effectivevolume}
  Given $m>0$ and $(\tau, \eta) \in (1, \infty) \times (0, 1)$ there
  exists $V_0 >0$ so that the following holds: Let $V \geq V_0$ and
  let $r \geq r_h$ be such that $V= \CL^n_{g_m} (B_r \setminus
  B_{r_h})$ and let $\Omega \subset \R^n$ be a bounded Borel set with finite
  perimeter such that $B_{r_h} \subset \Omega$ and $\CL^n_{g_m}(\Omega
  \setminus B_{r_h}) = V$. If $\Omega$ is $(\tau, \eta)$-off-center,
  i.e. if $\H^{n-1}_{g_m} ( \partial^* \Omega \setminus B_{\tau r} )
  \geq \eta \mathcal{H}_{g_m} (S_r)$, then
\begin{equation} \label{eqn: comparison in Schwarzschild}
  \H^{n-1}_{g_m}(\partial^* \Omega) \geq \H^{n-1}_{g_m} (S_r) + c \eta m \left(1 -  \frac{1}{\tau}\right)^2  r.
\end{equation}
Here, $c>0$ is a constant that only depends on $n$.
\end{proposition}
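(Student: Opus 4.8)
The plan is to reduce the statement to the quantitative facts about Bray's volume-preserving charts collected in the two preceding propositions. Let $\Omega \subset \R^n$ be admissible, i.e. $B_{r_h} \subset \Omega$ and $\CL^n_{g_m}(\Omega \setminus B_{r_h}) = V$, and assume $\Omega$ is $(\tau, \eta)$-off-center. Using the isometry in part (d) of Bray's proposition I would first transport $\Omega$ to a subset $\Omega^c$ of the warped cone $((s_0, \infty) \times \S^{n-1}, w_c^{4/(n-2)} g_m^c)$ with $\{c\} \times \S^{n-1}$ corresponding to $S_r$. Since the conformal factor $w_c$ is supported in $(s_0, c)$ and equals $1$ beyond $s = c$, the metric $w_c^{4/(n-2)} g_m^c$ agrees with the conical metric $g_0 := ds^2 + s^2 g_{\S^{n-1}}$ up to the bounds in part (e): $\alpha^2 g_m^c \leq g_0 \leq u_c^{-2/(n-1)} g_m^c$, and $w_c \geq 1$ means the physical metric dominates $g_m^c$ outside $B_c^c$. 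The strategy is: the off-center mass of $\partial^*\Omega$ lives in the region $s \geq \tau c'$ for an appropriate $c'$ comparable to $c$ (translating $B_{\tau r}$ through the isometry), where the metric is exactly the flat cone, and there the round comparison sphere at radius $c$ is strictly beaten by any competitor carrying mass that far out.

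The key computation goes as follows. Replace $\Omega$ by the ball $B_c^c = (s_0, c] \times \S^{n-1}$ in the cone: by parts (a) and (d) of Proposition on Bray's charts and Proposition~\ref{prop:propertiesu}(d), this ball has the same mean curvature and (after accounting for the $V_0$ volume deficit computed there) an enclosed volume that we can compare to $V$ up to an explicit error. The perimeter of $\partial^* \Omega$ in $g_m$ is at least its perimeter in $g_m^c$ rescaled by the lower bound in (e), and outside $B_{\tau r}$ we gain: there $w_c \equiv 1$, so the metric is $g_m^c$, and the cone $g_m^c$ with $u_c(s) \to 1$ differs from the flat cone $g_0$ only by how much $u_c$ has recovered from $\alpha$ to $1$. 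Part~(\ref{u(tc)-u(c)}) of Proposition~\ref{prop:propertiesu} gives precisely the quantitative recovery $u_c(\tau c) - \alpha \geq \delta \frac{m}{c^{n-2}}(1 - \frac{1}{\tau})^2$, which forces a strictly larger area element in the asymptotic cone metric compared with the conical metric $\alpha^{-2}ds^2 + \alpha^{2/(n-1)}s^2 g_{\S^{n-1}}$ that is used for the round comparison sphere. Combining: the part of $\partial^*\Omega$ beyond $B_{\tau r}$, whose area is at least $\eta \H^{n-1}_{g_m}(S_r) = \eta \omega_{n-1}(\text{area radius})^{n-1}$, contributes an excess of order $\eta \cdot \frac{m}{c^{n-2}}(1-\frac{1}{\tau})^2 \cdot c^{n-1} \sim \eta m (1 - \frac{1}{\tau})^2 r$ over what a purely conical competitor of the same volume would have, while the conical competitor in turn beats the round sphere $S_r$ by Bray's original argument via the monotonicity of the Hawking mass (Appendix~\ref{sec:Hawking}). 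Summing the two gains yields \eqref{eqn: comparison in Schwarzschild}.

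The main obstacle is the bookkeeping that makes the off-center mass actually land in the region where the metric is $g_m^c = g_0$ up to the controlled factor, rather than in the transition zone $(s_0, c)$ where $w_c$ blows up. Concretely one must check that the isometry of part (d) maps $\R^n \setminus B_{\tau r}$ into $(\tau' c, \infty) \times \S^{n-1}$ for some $\tau' > 1$ depending only on $\tau$ (using parts (a), (e) and that the map is rotationally invariant and monotone in the radial variable), and that the area comparison inequalities degrade by at most a factor controlled by $\alpha = 1 + O(m r^{-(n-2)})$, so that the lower-order error terms $O(c^{-(2n-4)})$ in Proposition~\ref{prop:propertiesu} are genuinely negligible against the main gain $\eta m (1-\frac{1}{\tau})^2 r$ once $V \geq V_0(m, \tau, \eta)$. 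This is exactly the structure of the argument in \cite[Proposition 3.3]{Eichmair-Metzger:2010} in dimension three, and with the higher-dimensional analogues of all the relevant estimates now in hand (Propositions above), the remaining work is to repeat that argument verbatim, tracking the dimensional constants; I expect no new ideas are needed beyond care with these constants.
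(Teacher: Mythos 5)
Your proposal follows essentially the same route as the paper, which itself gives no argument beyond citing \cite{Eichmair-Metzger:2010} and the two preparatory propositions: transport $\Omega$ through Bray's volume-preserving chart, use the off-center hypothesis to place a definite amount of area in the region where $u_c$ has recovered from $\alpha$ by at least $\delta\, m\, c^{2-n}(1-\tfrac{1}{\tau})^2$ (Proposition~\ref{prop:propertiesu}(\ref{u(tc)-u(c)})), and add this gain to the baseline comparison with $S_r$. The one inaccuracy is attributing the baseline step to Hawking-mass monotonicity: that monotonicity is what underlies the construction of the chart, while the comparison of an arbitrary competitor with the cone ball is carried out via the Euclidean isoperimetric inequality in the volume-normalized coordinates of part (e); otherwise the sketch, including the bookkeeping issues you flag, matches the intended argument.
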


The proof of the main theorem in this section below is literally the same as in \cite{Eichmair-Metzger:2010}, except for adapting various exponents throughout the proof. Since the modifications are delicate, we include the full argument.

\begin{theorem} [Cf. \protect{\cite[Theorem 3.4]{Eichmair-Metzger:2010}}] \label{thm:effective_volume_comparison} Let $(M, g)$ be an
initial data set that is $\C^0$-asymptotic to Schwarzschild of mass $m
>0$. For every tuple $(\tau, \eta) \in (1, \infty) \times (0, 1)$ and every
constant $\Theta >0$ there exists a constant $V_0 >0$ such that the
following holds: Given a bounded Borel set $\Omega$ with finite perimeter
in $(M, g)$  and with $\CL^n_g(\Omega) \geq V_0$ that is
$(\tau, \eta)$-off-center with $\H^{n-1}_g(\partial^* \Omega) ^{\frac{1}{n-1}}
\CL^n_g(\Omega)^{-\frac{1}{n}} \leq \Theta$ and such that $\H^{n-1}_g(\partial^*
\Omega \cap B_\sigma) \leq \Theta \sigma^{n-1}$ holds for all $\sigma \geq 1$,
one has $$\H^{n-1}_g(\partial^* \Omega) \geq\H^{n-1}_g(S_r) + c \eta m \left(1 -  \frac{1}{\tau} 
\right)^2 r$$ where $r\geq1$ is such that $\CL^n_g(B_r) = \CL^n_g(\Omega)$, and where $c >0$ is a constant that only depends on $n$.
\end{theorem}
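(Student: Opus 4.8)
The plan is to transfer the effective volume comparison from the exact Schwarzschild manifold (Proposition \ref{prop:effectivevolume}) to the asymptotically Schwarzschild manifold $(M,g)$ by a careful comparison of areas and volumes on the end, exactly as in \cite[Theorem 3.4]{Eichmair-Metzger:2010}. The point is that all the hypotheses — $\C^0$-asymptotic to Schwarzschild, $\CL^n_g(\Omega)$ large, $(\tau,\eta)$-off-center, the isoperimetric-ratio bound $\leq \Theta$, and the density bound $\H^{n-1}_g(\partial^*\Omega \cap B_\sigma) \leq \Theta\sigma^{n-1}$ — are precisely what is needed to control the error terms produced when one replaces $g$ by $g_m$ and vice versa, uniformly in the position of $\Omega$.

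First I would fix the bookkeeping. Let $r \geq 1$ be the radius with $\CL^n_g(B_r) = \CL^n_g(\Omega) =: V$; since $(M,g)$ is $\C^0$-asymptotic to Schwarzschild, $\CL^n_g(B_r)$ and $\CL^n_{g_m}(B_r \setminus B_{r_h})$ differ by $O(r^{n-2+\gamma}) + O(1)$ as $r \to \infty$, so I can choose a radius $\rho = \rho(r)$, with $\rho = r(1 + O(r^{-\gamma}) + O(r^{-n}))$, such that $\CL^n_{g_m}(B_\rho \setminus B_{r_h}) = V$. The goal is to compare $\H^{n-1}_g(\partial^*\Omega)$ with $\H^{n-1}_{g_m}(S_\rho)$ and then translate back to $\H^{n-1}_g(S_r)$, the latter difference again being a lower-order term in $r$.

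Next comes the heart of the argument, following \cite{Eichmair-Metzger:2010}. Fix a large radius $\sigma_0 \geq 1$ (eventually chosen depending only on $m, \tau, \eta, \Theta, n$) and split $\partial^*\Omega = (\partial^*\Omega \cap B_{\sigma_0}) \cup (\partial^*\Omega \setminus B_{\sigma_0})$. On the exterior piece, where $g$ and $g_m$ are $\C^0$-close with multiplicative error $1 + O(\sigma_0^{-\gamma})$, the $g$-area and the $g_m$-area of $\partial^*\Omega \setminus B_{\sigma_0}$ are comparable up to that factor; the interior piece has $g$-area at most $\Theta\sigma_0^{n-1}$ by the density hypothesis, which is negligible compared to the gain $c\eta m (1-1/\tau)^2 r$ once $r$ is large. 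Now I consider the competitor $\Omega' := (\Omega \setminus B_{\sigma_0}) \cup B_{\sigma_0}^{g_m}$-type modification inside $\R^n$ relative to $g_m$, with $B_{r_h}$ filled in; adjusting $\sigma_0$ and passing to a set of the right $g_m$-volume $V$ costs only a lower-order change in $g_m$-perimeter (here one uses the isoperimetric ratio bound $\leq \Theta$ to control how much perimeter is needed to fix the volume deficit, and the density bound to control the surgery near $S_{\sigma_0}$). Crucially, $\Omega'$ inherits the off-center property: since $\partial^*\Omega$ has at least $\eta \H^{n-1}_g(S_r)$ of its area outside $B_{\tau r}$, and $\tau r \gg \sigma_0$, the surgery does not touch that region, so $\Omega'$ is $(\tau', \eta')$-off-center in $(\R^n, g_m)$ for slightly worse but fixed parameters $\tau' < \tau$, $\eta' < \eta$. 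Applying Proposition \ref{prop:effectivevolume} to $\Omega'$ gives $\H^{n-1}_{g_m}(\partial^*\Omega') \geq \H^{n-1}_{g_m}(S_\rho) + c\eta' m (1 - 1/\tau')^2 \rho$. Unwinding the two comparisons — $\H^{n-1}_g(\partial^*\Omega) \geq (1 - O(\sigma_0^{-\gamma}))\H^{n-1}_{g_m}(\partial^*\Omega') - \Theta\sigma_0^{n-1} - (\text{surgery error})$ and $\H^{n-1}_{g_m}(S_\rho) \geq \H^{n-1}_g(S_r) - O(r^{n-2+\gamma})$ — and choosing first $\sigma_0$ large (to make $O(\sigma_0^{-\gamma})$ small relative to $c\eta m$) and then $V_0$ large (to dominate the $\sigma_0$-dependent additive errors and the $O(r^{n-2+\gamma})$ term by the linear-in-$r$ gain), yields the claimed inequality with a possibly smaller constant $c = c(n)$.

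The main obstacle is keeping the gain $c\eta m(1-1/\tau)^2 r$ — which is \emph{linear} in $r$ with a coefficient proportional to $\eta$ — strictly larger than the accumulated error terms, \emph{uniformly in the location of $\Omega$ on the end}. The errors are of two types: a multiplicative error $O(\sigma_0^{-\gamma})$ times the (possibly large) total area $\H^{n-1}_g(\partial^*\Omega)$, and additive errors like $\Theta\sigma_0^{n-1}$ and $O(r^{n-2+\gamma})$. The multiplicative error is the delicate one: it is controlled only because $\sigma_0$ is chosen after $\tau, \eta, \Theta$ but before $V_0$, so $O(\sigma_0^{-\gamma})\H^{n-1}_g(\partial^*\Omega) \leq O(\sigma_0^{-\gamma})\Theta^{n-1} V^{(n-1)/n}$ can be absorbed into $\tfrac12 \H^{n-1}_{g_m}(S_\rho) \approx \tfrac12 \omega_{n-1}\rho^{n-1}$ on the main term side rather than needing to compete with the small linear gain — this is exactly the bookkeeping subtlety flagged in the remark preceding the statement, and where one must "adapt various exponents throughout." Once this hierarchy of parameter choices ($\tau,\eta,\Theta \rightsquigarrow \sigma_0 \rightsquigarrow V_0$) is respected, the estimate closes.
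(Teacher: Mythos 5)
The overall strategy --- transferring Proposition \ref{prop:effectivevolume} from the exact Schwarzschild metric to $(M,g)$ by comparing areas and volumes on the end, checking that the off-center property survives with slightly degraded parameters, and then unwinding --- is the right one, but the way you control the area comparison has a genuine gap. You split $\partial^*\Omega$ at a single fixed radius $\sigma_0$ and estimate the exterior piece by a \emph{multiplicative} metric comparison, producing an error of the form $O(\sigma_0^{-\gamma})\,\H^{n-1}_g(\partial^*\Omega)\sim O(\sigma_0^{-\gamma})\,r^{n-1}$, and you assert this can be ``absorbed into $\tfrac12\H^{n-1}_{g_m}(S_\rho)$ on the main term side''. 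That absorption is not available: in the target inequality the terms of order $r^{n-1}$ (and indeed of order $r^{n-2}$) on the two sides agree exactly, and the entire content of the theorem is a gain of order $r$. An additive error $\sigma_0^{-\gamma}r^{n-1}$ with $\sigma_0$ fixed (and it must be fixed before $V_0$, as you say) eventually dwarfs $c\eta m(1-1/\tau)^2 r$ as $r\to\infty$, and there is no spare main term to hide it in. Nor can you rescue this by letting $\sigma_0$ grow with $r$: making the exterior multiplicative error $o(r)$ forces $\sigma_0\gtrsim r^{(n-2)/\gamma}\geq r$, at which point the interior error $\Theta\sigma_0^{n-1}$ explodes. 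A symptom of the same bookkeeping problem is your claim that $\H^{n-1}_{g_m}(S_\rho)$ and $\H^{n-1}_g(S_r)$ differ by $O(r^{n-2+\gamma})$ and that this is dominated by the linear gain; for $n\geq3$ one has $n-2+\gamma\geq 1+\gamma>1$, so as written that error term is \emph{larger} than the gain.

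The paper avoids the single-scale cutoff entirely: it integrates the pointwise decay $|(g-g_m)_{ij}|=O(s^{-(n-2+\gamma)})$ over the \emph{whole} reduced boundary, with the density hypothesis $\H^{n-1}_g(\partial^*\Omega\cap B_\sigma)\leq\Theta\sigma^{n-1}$ entering at every scale through the co-area formula (Lemma \ref{lem:area_growth_decay} and Corollary \ref{cor:surface_comparison} with $\beta=\tfrac{\gamma}{2}$ for areas, Lemma \ref{lem:volume_comparison} with $\alpha=\tfrac{2+\gamma}{2}$ for volumes). This yields purely \emph{additive} errors of order $\H^{n-1}_g(\partial^*\Omega)^{\frac{2-\gamma}{2(n-1)}}=O(r^{1-\gamma/2})=o(r)$, which is what actually closes the argument; the last line of the paper's proof is precisely the observation that $\tfrac{2-\gamma}{2}<1$. (The paper also sidesteps your surgery and volume-readjustment step: it simply sets $\tilde\Omega=\Omega\cup B_1$, lets $\tilde r$ be the Schwarzschild radius whose volume matches $\CL^n_{g_m}(\tilde\Omega_m)$, and tracks $|\tilde r-r|$, rather than restoring the volume exactly.) Your transfer of the off-center property to parameters such as $(\tfrac{1+\tau}{2},\tfrac{\eta}{2})$ and your final hierarchy of choices are fine; the missing ingredient is the integral decay estimate in place of the fixed-cutoff multiplicative comparison.
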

\begin{proof}
For ease of exposition we only consider smooth regions $\Omega$. The
result for sets with finite perimeter follows from this by approximation.
We will use here that $\H^{n-1}_g (\partial \Omega) \to \infty$ as
$\CL^n_g(\Omega) \to \infty$, which follows from the isoperimetric
inequality in Lemma \ref{lem:crudeisoperimetricinequality}. Note also
that $\CL^n_g(\Omega)= \frac{ \omega_{n-1} r^n} {n} + O(r^{n-1})$. 

We break into several steps:

\begin{enumerate} [(a)]  
\item Let $\tilde \Omega := \Omega \cup B_{1} \subset M$. Let $\tilde
\Omega_m :=
\left( x (\Omega \setminus B_{1})  \cup B_1(0) \right) \setminus B_{r_h}(0)$ be the corresponding region in Schwarzschild.
\item Note that $\CL^n_g(\tilde \Omega) = \CL^n_g(\Omega) + O(1)$ and
$\H^{n-1}_g(\partial \tilde \Omega) = \H^{n-1}_g (\partial \Omega) + O(1)$.
Moreover, $\tilde \Omega$ satisfies $\H^{n-1}_g (\partial \tilde \Omega \cap
B_\sigma) \leq \tilde \Theta \sigma^{n-1}$ for all $\sigma \geq 1$ where
$\tilde \Theta$ depends only on $\Theta$ and $(M, g)$. 
 
\item By Corollary \ref{cor:surface_comparison} with $\beta =
\frac{\gamma}{2}$, $$\H^{n-1}_{g_m} (\partial \tilde \Omega_m) \leq \H^{n-1}_g(\partial
\tilde \Omega) +
O(\H^{n-1}_g(\partial \tilde \Omega)^{\frac{2-\gamma}{2(n-1)}}) \leq \H^{n-1}_g(\partial
\Omega)
+ O (\H^{n-1}_g (\partial \Omega)^{\frac{2-\gamma}{2(n-1)}}).$$ 

\item \label{item4} By Lemma \ref{lem:volume_comparison} with $\alpha =
\frac{2+\gamma}{2}$, $\CL^n_{g_m} (\tilde \Omega_m) = \CL^n_g(\Omega) +
O(\CL^n_g(\Omega)^{\frac{4-\gamma}{2n}})$. 

\item \label{item5} By Lemma \ref{lem:volume_comparison} with $\alpha =
\frac{2+\gamma}{2}$ and choice of $r$, $\CL^n_{g_m} (B_r \setminus B_{r_h})
=  \CL^n_{g_m} (B_r \setminus B_1) + O(1) = \CL^n_g (B_r \setminus B_1) +
O(\CL^n_g(B_r \setminus B_1)^{\frac{4-\gamma}{2n}}) = \CL^n_g (\Omega) +
O(\CL^n_g(\Omega)^{\frac{4-\gamma}{2n}})$. 

\item By (\ref{item4}) and (\ref{item5}) and choice of $r$ we have that
$\CL^n_{g_m} (\tilde \Omega_m) = \CL^n_{g_m} (B_r \setminus B_{r_h})
+ O(r^{\frac{4-\gamma}{2}})$. Let $\tilde r$ be such that $\CL^n_{g_m} (\tilde
\Omega_m) = \CL^n_{g_m} (B_{\tilde r} \setminus B_{r_h})$. Then
$\tilde r = r + O (r^{- n + \frac{6-\gamma}{2}})$. 

\item The Schwarzschild region $\tilde \Omega_m$ is $(\frac{1
+ \tau}{2}, \frac{\eta}{2})$-off-center provided that $\CL^n_g(\Omega)$ is
sufficiently large. Hence $\H^{n-1}_{g_m} (S_{\tilde r} )+  c \eta m   \left(1 -  \frac{1}{\tau}  \right)^2 \tilde r \leq \H^{n-1}_{g_m}
(\partial \tilde \Omega_m)$ by (\ref{eqn: comparison in Schwarzschild}).

\item $\H_{g_m}^2 (S_r) = \H^{n-1}_{g_m} (S_{r} ) \leq
\H^{n-1}_{g_m} (S_{\tilde r} ) +  O(\CL^n_g(\Omega)^{\frac{2-\gamma}{2n}})$ where
the inequality follows by explicit computation from $\CL^n_{g_m} (B_r \setminus
B_{r_h}) = \CL^n_g (\Omega) +
O(\CL^n_g(\Omega)^{\frac{4-\gamma}{2 n}})$. 

\item $\H^{n-1}_g(S_r) \leq \H^{n-1}_{g_m} (S_r) + O(1)$. This is obvious.
\item $\H^{n-1}_g (S_r) \leq \H^{n-1}_g(\partial \Omega) -  \frac{c}{2} \eta m 
\left(1 -  \frac{1}{\tau}  \right)^2 r+ O(\CL^n_g(\Omega)^{\frac{2-\gamma}{2n}}) +
O(\H^{n-1}_g(\partial \Omega)^{\frac{2-\gamma}{2(n-1)}})$.
\end{enumerate}
The conclusion follows from this since $\H^{n-1}_g(\partial \Omega)
^{\frac{1}{n-1}} \CL^n_g(\Omega)^{-\frac{1}{n}} \leq \Theta$ and since
$\frac{2-\gamma}{2} <1$.
\end{proof}


\section{Large isoperimetric regions center} \label{sec:center}

The results in this section follow from the effective volume comparison result in Theorem \ref{thm:effective_volume_comparison} and the results in Appendix \ref{sec:regularity} essentially as in \cite{Eichmair-Metzger:2010}. 

\begin{theorem} [Cf. \protect{\cite[Theorem 5.1]{Eichmair-Metzger:2010}}]\label{thm:centering} Let $(M, g)$ be an initial data set that is $\C^0$-asymptotic to Schwarzschild of mass $m > 0$. There exists a constant $V_0 > 0$ so that if $\Omega$ is an isoperimetric region with $\CL^n_g(\Omega) = V \geq V_0$, then $\Omega$ is smooth and  $\partial \Omega$ is a connected smooth hypersurface that is close to the coordinate sphere $S_r$, where $r$ is such that $\CL_g^n(\Omega) = \CL_g^n(B_r) = V$. The scale invariant $\C^{2, \alpha}$ norms of functions that describe such $\partial \Omega$ as normal graphs above the corresponding coordinate spheres $S_r$ tend to zero as $V \to \infty$. 

\begin{proof} It follows exactly as in the proof of Theorem 5.1 in \cite{Eichmair-Metzger:2010} that the reduced boundary of $\Omega$ outside of $B_{\frac{r}{2}}$ is a smooth connected closed hypersurface with the properties asserted for the boundary of $\Omega$ in the statement of the theorem. Assume that $  B_{r/2} \cap  \text{supp}(\partial^*\Omega) \neq \emptyset$ and let $\rho_0 := \sup \{ \rho \in [1, r/2] :  S_\rho  \cap \text{supp}  (\partial^*\Omega)  \neq \emptyset \}$. The half-space theorem \cite[Corollary 37.6]{GMT} shows that $S_{\rho_0} \cap \text{supp}  (\partial^*\Omega) \neq \emptyset$ consists of regular points. If $S_{\rho_0}$ is mean convex, this contradicts the maximum principle. Since all sufficiently large coordinate spheres are mean convex, we conclude that $\CL^n_g(B_{\frac{r}{2}} \setminus \Omega)$ is bounded independently of $V$. If it were non-zero, we could consider the smooth region $\Omega \cup B_{\frac{r}{2}}$ and move its mean convex outer boundary inwards to adjust the (relatively small) increase in volume back to $V$. The resulting region has less boundary area than $\Omega$, a contradiction. 
\end{proof}
\end{theorem}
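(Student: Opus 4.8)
The plan is to follow the structure of \cite[Theorem 5.1]{Eichmair-Metzger:2010}, using Theorem \ref{thm:effective_volume_comparison} as the central estimate and the regularity theory of Appendix \ref{sec:regularity} for the smoothness and graphical description. First I would record that any isoperimetric region $\Omega$ of volume $V$ automatically satisfies the hypotheses needed to invoke Theorem \ref{thm:effective_volume_comparison}: comparing $\partial\Omega$ with a coordinate sphere $S_r$ of the same enclosed volume (which exists since $r$ with $\CL^n_g(B_r)=V$ exists for $V$ large) gives the isoperimetric-ratio bound $\H^{n-1}_g(\partial^*\Omega)^{1/(n-1)}\CL^n_g(\Omega)^{-1/n}\leq\Theta$ for a universal $\Theta$, since $S_r$ itself has ratio bounded as $r\to\infty$. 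The uniform density bound $\H^{n-1}_g(\partial^*\Omega\cap B_\sigma)\leq\Theta\sigma^{n-1}$ for all $\sigma\geq1$ is the monotonicity-type estimate for isoperimetric boundaries in a space with controlled geometry, again from the appendix.

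Next I would argue that $\partial\Omega$ cannot have mass escaping to spatial infinity: if $\Omega$ were $(\tau,\eta)$-off-center for some fixed $\tau>1$ and $\eta\in(0,1)$, then Theorem \ref{thm:effective_volume_comparison} would force $\H^{n-1}_g(\partial\Omega)\geq\H^{n-1}_g(S_r)+c\eta m(1-1/\tau)^2 r$, contradicting that $\Omega$ is isoperimetric and hence $\H^{n-1}_g(\partial\Omega)\leq\H^{n-1}_g(S_r)$ (comparing against the competitor $B_r$). So for every $\tau>1$ and $\eta>0$, once $V$ is large enough $\Omega$ is not $(\tau,\eta)$-off-center, i.e. $\H^{n-1}_g(\partial^*\Omega\setminus B_{\tau r})<\eta\H^{n-1}_g(S_r)$: almost all of the boundary area sits in $B_{\tau r}$. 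Combined with the lower density bound and the fact that $\CL^n_g(\Omega)=\CL^n_g(B_r)$, a standard argument shows $\partial^*\Omega$ is confined to an annular region $B_{(1+o(1))r}\setminus B_{(1-o(1))r/2}$ — more precisely, outside $B_{r/2}$ one gets that $\partial^*\Omega$ is a smooth connected closed hypersurface, scale-invariantly $\C^{2,\alpha}$-close to $S_r$, by Allard-type regularity and the Schauder estimates for the CMC equation as $V\to\infty$; this is exactly the content imported verbatim from \cite{Eichmair-Metzger:2010}.

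It then remains to rule out any piece of $\partial^*\Omega$ inside $B_{r/2}$, and this is the step carried out explicitly in the excerpt and the one I expect to be the main obstacle to write cleanly. Suppose $B_{r/2}\cap\supp(\partial^*\Omega)\neq\emptyset$ and let $\rho_0:=\sup\{\rho\in[1,r/2]:S_\rho\cap\supp(\partial^*\Omega)\neq\emptyset\}$. By construction $\partial^*\Omega$ lies inside $B_{\rho_0}$ near $S_{\rho_0}$ on the outside, so the half-space/maximum-principle comparison (\cite[Corollary 37.6]{GMT}) applies: the touching points of $\supp(\partial^*\Omega)$ with $S_{\rho_0}$ are regular, and since all sufficiently large coordinate spheres $S_\rho$ are mean convex (with mean curvature vector pointing inward), the strong maximum principle is violated unless $\rho_0$ is bounded independently of $V$. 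Hence $\CL^n_g(B_{r/2}\setminus\Omega)\leq\CL^n_g(B_{\rho_0})$ is bounded uniformly in $V$. Finally, if this quantity were nonzero one replaces $\Omega$ by the smooth region $\Omega\cup B_{r/2}$, whose outer boundary is a large mean-convex coordinate sphere; pushing that boundary slightly inward restores the volume to $V$ while strictly decreasing boundary area — since the added volume is small and the area gained near $S_{r/2}$ is at most of lower order, while the area lost by the inward push of the large sphere is of order $r^{n-1}$ times a small but fixed amount — contradicting minimality. Therefore $B_{r/2}\cap\supp(\partial^*\Omega)=\emptyset$, $\Omega\supset B_{r/2}$, and $\partial\Omega$ is globally the smooth graph over $S_r$ with the asserted decay, completing the proof.
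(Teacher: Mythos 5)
Your proposal follows the paper's argument essentially verbatim: the outer part of $\partial^*\Omega$ is handled by importing the centering and regularity analysis from \cite{Eichmair-Metzger:2010} via Theorem \ref{thm:effective_volume_comparison}, and the inner part is excluded by the same $\rho_0$/half-space/maximum-principle argument followed by the fill-in-and-push-inward comparison. One small slip: the outer boundary of $\Omega\cup B_{r/2}$ is not a coordinate sphere but the mean convex hypersurface $\partial\Omega\setminus B_{r/2}$ close to $S_r$ (and no new area is created near $S_{r/2}$, since $\Omega$ contains a full annulus just outside $B_{r/2}$), but this does not affect the argument.
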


\begin{theorem} [Cf. \protect{\cite[Theorem 5.2]{Eichmair-Metzger:2010}}]\label{thm:minimizers_exist} Let $(M, g) $ be an initial data set that is $\C^0$-asymptotic to Schwarzschild of mass $m>0$. There exists $V_0 >0$ so that for every volume $V \geq V_0$ there exists a smooth isoperimetric region  $\Omega$ with $\CL^n_g(\Omega) = V$. 
\end{theorem}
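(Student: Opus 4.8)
The plan is to use the direct method of the calculus of variations, with the effective volume comparison theorem as the key tool to rule out the one way minimizing sequences can fail to converge, namely loss of volume to infinity. Fix $V \geq V_0$ (with $V_0$ to be enlarged finitely often), and let $\Omega_i$ be a minimizing sequence for \eqref{eqn:isoperimetric_area_function}, i.e. smooth regions with $\CL^n_g(\Omega_i) = V$ and $\H^{n-1}_g(\partial \Omega_i) \to A_g(V)$. First I would record that $A_g(V)$ is finite and comparable to $V^{\frac{n-1}{n}}$: the upper bound comes from using large coordinate balls as competitors, and the lower bound from the crude isoperimetric inequality of Lemma \ref{lem:crudeisoperimetricinequality}. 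In particular the perimeters $\H^{n-1}_g(\partial \Omega_i)$ are uniformly bounded, so by standard BV compactness (relative to a large but fixed coordinate ball, exhausting $M$ by a diagonal argument) a subsequence of $\Omega_i$ converges in $L^1_{loc}$ to a set $\Omega$ of finite perimeter, with $\H^{n-1}_g(\partial^* \Omega) \leq A_g(V)$ by lower semicontinuity of perimeter.

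The only obstruction to concluding is that volume may escape to infinity: a priori $\CL^n_g(\Omega) = V - \delta$ for some $\delta \in [0, V]$, where $\delta$ is the volume lost in the end. If $\delta = 0$ then $\Omega$ is an isoperimetric region of volume $V$, and interior and boundary regularity (Appendix \ref{sec:regularity}) upgrade it to a smooth region, so we are done. So suppose for contradiction that $\delta > 0$. The idea is to compare with the competitor obtained by transplanting the lost volume $\delta$ into a coordinate sphere region far out in the asymptotically Schwarzschild end: take $\Omega' := \Omega \cup (B_{R'} \setminus B_R)$ where $1 \ll R \ll R'$ are chosen so that $\CL^n_g(B_{R'} \setminus B_R) = \delta$ and $\supp(\partial^*\Omega) \subset B_R$ (possible once $R$ is large, because only finitely much perimeter of $\Omega$ sits outside any given ball). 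Then $\CL^n_g(\Omega') = V$ and, using that the perimeter of a thin spherical shell $B_{R'}\setminus B_R$ is roughly $2 \H^{n-1}_g(S_{R})$ while the perimeter we \emph{pick up} this way is controlled, one gets $\H^{n-1}_g(\partial^*\Omega') \leq \H^{n-1}_g(\partial^*\Omega) + C\delta^{\frac{n-1}{n}} + (\text{small})$.

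The main obstacle, and the reason the effective volume comparison theorem is needed rather than a soft argument, is that this crude shell competitor is not obviously better than a genuine minimizer: we must instead argue that the \emph{pieces} $\Omega_i$ themselves, once they leak volume $\delta$ into the end, are quantitatively non-optimal. The right way is to first note that a minimizing sequence must concentrate as a single piece (by the classical argument: if it split into two pieces of volumes $V_1, V_2$ both bounded away from $0$ and $V$, then $A_g(V_1) + A_g(V_2) < A_g(V)$ would follow from strict subadditivity of $A_g$ in the relevant range, which for large volumes follows from $A_g(V) \sim \omega_{n-1}^{1/n} n^{(n-1)/n} V^{(n-1)/n}$ up to lower order — and the positive mass correction in the effective comparison makes the inequality strict), so the escaping volume behaves like a single region $\Omega_i \setminus B_{R_i}$ with $R_i \to \infty$. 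This escaping region has volume $\to \delta$ and lies in the region where $g$ is $\C^0$-close to $g_m$; comparing it to a centered coordinate sphere region of the same volume via Theorem \ref{thm:effective_volume_comparison} (the set $\Omega_i \setminus B_{R_i}$ is $(\tau,\eta)$-off-center for suitable fixed $\tau,\eta$ since all of its mass is far out), the positivity of $m$ forces $\H^{n-1}_g(\partial^*(\Omega_i \setminus B_{R_i}))$ to exceed the area of that coordinate sphere by a definite amount $c\eta m (1-\tfrac1\tau)^2 R_i \to \infty$. But then replacing the escaped piece by the corresponding coordinate sphere region (which can be further pushed inward to a region of volume exactly $V$ bounded by a mean convex sphere, exactly as in the proof of Theorem \ref{thm:centering}) strictly lowers the perimeter, contradicting that $\Omega_i$ was minimizing. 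Hence $\delta = 0$, and $\Omega$ is the desired smooth isoperimetric region.
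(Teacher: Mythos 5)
Your overall strategy (direct method plus positivity of the mass to rule out escape of volume) is the right one, and it is essentially the strategy the paper inherits from \cite[Theorem 5.2]{Eichmair-Metzger:2010}. But the step that actually rules out escape has a genuine gap, concentrated in the regime where the escaping volume $\delta$ is small relative to $V$. First, a misreading: the excess in Theorem \ref{thm:effective_volume_comparison} is $c\eta m(1-\tfrac1\tau)^2 r$ where $r$ is determined by the \emph{volume} of the set under consideration, not by its distance from the origin; applied to the escaping piece (volume $\approx\delta$) it yields a fixed excess $\approx c\,m\,\delta^{1/n}$ over the centered sphere of volume $\delta$, not a quantity growing like $R_i$ --- and the theorem does not even apply unless $\delta$ exceeds the threshold $V_0(\tau,\eta,\Theta)$. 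Second, and more seriously, converting that excess into a contradiction requires reassembling the retained part (volume $V-\delta$) and the escaped volume into a single competitor of volume $V$, i.e.\ one needs $A_g(V-\delta)+c_n\delta^{\frac{n-1}{n}}>\H^{n-1}_g(S_{r_V})$ where $\CL^n_g(B_{r_V})=V$. The only gain available is the concavity defect of $W\mapsto W^{\frac{n-1}{n}}$, of order $\min(\delta,V-\delta)^{\frac{n-1}{n}}$, and this does not dominate the mass correction $\sim mV^{1/n}$ and the lower-order errors when $\delta$ is small (for instance bounded as $V\to\infty$). So your parenthetical claim that strict subadditivity of $A_g$ ``in the relevant range'' follows from $A_g(V)\sim c_nV^{\frac{n-1}{n}}$ up to lower order is not justified for unbalanced splittings; the same issue blocks applying Theorem \ref{thm:effective_volume_comparison} to the whole configuration, since the admissible $\eta$ is then $\approx(\delta/V)^{\frac{n-1}{n}}$ and the threshold $V_0$ depends on $\eta$.

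The missing ingredient is the Lagrange-multiplier statement built into Proposition \ref{prop:cut_and_paste_for_minimizing_sequence}: a minimizing sequence can be replaced by an honest isoperimetric region $\Omega$ of volume $V-\delta$ together with a coordinate ball $B(p_i,r_i)$ drifting to infinity with $r_i\to r$, and if $r>0$ and $\CL^n_g(\Omega)>0$ then $\partial\Omega$ has constant mean curvature $\tfrac{n-1}{r}$, with $\omega_{n-1}r^n/n=\delta$. Since $\Omega$ is isoperimetric for its own volume, Theorem \ref{thm:centering} (applicable once $V-\delta\geq V_0$) shows that $\partial\Omega$ is close to a centered sphere $S_\rho$ with $\CL^n_g(B_\rho)=V-\delta$, whose mean curvature is close to $\tfrac{n-1}{\rho}$; hence $r\approx\rho$ and therefore $\delta\approx V-\delta$. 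This excludes precisely the problematic small-$\delta$ regime, after which the concavity defect (now of order $V^{\frac{n-1}{n}}$) overwhelms the mass correction; the remaining cases ($\CL^n_g(\Omega)=0$, or $V-\delta<V_0$) are handled by $A_g(V)\geq c_n\delta^{\frac{n-1}{n}}\geq c_n(V-V_0)^{\frac{n-1}{n}}>\H^{n-1}_g(S_{r_V})$ for $V$ large, using the strict area deficit of centered spheres coming from $m>0$. A further reason to route the argument through Proposition \ref{prop:cut_and_paste_for_minimizing_sequence} rather than through the raw pieces $\Omega_i\setminus B_{R_i}$ is that the hypotheses of Theorem \ref{thm:effective_volume_comparison} (quadratic area growth of the reduced boundary, bounded isoperimetric ratio) are supplied by Lemma \ref{lem:quadraticareagrowthisoperimetric} for isoperimetric regions, not for arbitrary members of a minimizing sequence.
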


\begin{remark} It follows from the argument in \cite[Lemma 5]{Bray:1998} that a closed isoperimetric surface in a Riemannian manifold with non-negative Ricci-curvature is either connected or totally geodesic. It is tempting to impose a curvature condition and transplant Bray's argument to minimizing sequences (as in Proposition \ref{prop:cut_and_paste_for_minimizing_sequence}) to prevent them from splitting up into a part that stays behind and a part that diverges to infinity. Such arguments are investigated for various kinds of asymptotic geometries in recent work of A. Mondino and S. Nardulli. Note that the Ricci-tensor of the Schwarzschild manifold has a negative eigenvalue. Moreover, every complete one-ended asymptotically flat manifold that has non-negative Ricci-curvature is flat. (This follows from the Bishop-Gromov comparison theorem.)
\end{remark}


\section{Uniqueness of large isoperimetric regions and the existence of an isoperimetric foliation}
\label{sec:uniqueness}
Let $\tau \in (0, \frac{1}{2})$ and $R, C>1$. We consider the
Banach space $\mathcal{B}_{R, \tau, C}$ of tuples $(u, g)$, where $u \in \C^{2, \alpha}(S_R(0))$ is such that 
\begin{equation*}
  \sup_{S_r(0)} R^{-1} |u| + |D u| + R | D^2 u |  + R^{1 + \alpha} [D^2 u]_\alpha \leq \tau,
\end{equation*}
where the derivatives and norms are those of $S_r(0)$, and where $g_{ij}$ is a $\C^2$ metric on 
on $\overline B_{2R}(0) \setminus B_{
\frac{R}{2}}(0)$ such that for some $\gamma\in(0,1]$,
\begin{eqnarray}
  \label{eq:class_metric_assumption}
    |(g - g_m)_{ij}| + R |\partial_k (g - g_m)_{ij}| + R^2
    |\partial^2_{kl}(g -
    g_m)_{ij}|
    \leq C R^{2-n-\gamma}
    \\ \text { on }
    \overline B_{2R}(0) \setminus B_{\tfrac{R}{2}}(0) \text{ for all } i, j, k, l \in \{1, \ldots, n\}. \nonumber
\end{eqnarray}
Here, $(g_m)_{ij} = (1 + \frac{m}{2r^{n-2}})^{\frac{4}{n-2}} \delta_{ij}$ are the coefficients of the Schwarzschild metric of mass $m>0$. Given $(u, g) \in \B_{R, \tau, C}$, we will consider the surface $\graph (u) := \{ (1 + R^{-1} u(x)) x : x \in S_R(0)\} \subset \overline B_{2R}(0) \setminus B_\frac{R}{2}(0)$ and compute associated geometric quantities with respect to $g$. 

The classes $\B_{R, \tau, C}$ and how we use them are closely related to the classes $\B_{\sigma}$ in the work of G. Huisken and S.-T. Yau \cite[p. 286]{Huisken-Yau:1996}, cf. the proof of Theorem 5.1 and the remarks in the last paragraph on p. $311$ in their paper. 

\subsection{Curvature estimates for surfaces in $\B_{R, \tau, C}$}

\begin{proposition} \label{prop:geometry_sigma}
  Given $C>0$, there exist $R_0 >0$ and $\tau_0 \in (0, \frac{1}{2})$ such
that for all $R > R_0$ and $\tau \in (0, \tau_0)$, we have the following
estimates for geometric quantities of $\Sigma = \graph(u)$ with
respect to $g$ for all $(u, g) \in \B_{R, \tau, C}$:
  \begin{align}
     \nonumber
    |\tf| &\leq c (\tau R^{-1} + R^{1-n-\gamma}),
    \\
    \nonumber
    \frac{(n-1)}{2} \leq |H R| & \leq 2(n-1), 
    \\
       \nonumber
   |\Rm| &\leq c R^{-n}, 
   \\ \label{eq:curvest_RicciNN}
    \left|\Ric(\nu,\nu) + \frac{(n-1)(n-2)m}{R^n}\right| &\leq c(\tau R^{-n} + R^{-n-\gamma}), \text{and}
    \\
   \big|\iota^*_\Sigma (\Rm \lfloor \nu)\big| &\leq c (\tau R^{-n} + R^{-n-\gamma}).    \nonumber 
  \end{align}  
Here, $\Rm$ and $\Ric$ denote the Riemann and the Ricci curvature tensors of $g$, $\nu$ the unit normal of $\Sigma$ with respect to $g$, $\tf$ the trace free part of the second fundamental form of $\Sigma$, $H$ the mean curvature, $\iota_\Sigma$ the embedding of $\Sigma$ into $\R^n$, $\overline \nabla$ the covariant derivative with respect to the induced metric on $\Sigma$, and $c>0$ is a constant that only depends on $n$ and $C$. Contractions are taken with respect to the first index. Our sign conventions are reviewed in Appendix \ref{sec:standard}. 
\end{proposition}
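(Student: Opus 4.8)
The plan is to derive every estimate from the single structural fact that $(u,g)\in\B_{R,\tau,C}$ forces $\Sigma=\graph(u)$ to be a small $\C^{2,\alpha}$ perturbation (after rescaling by $R^{-1}$) of the coordinate sphere $S_R(0)$ inside a metric that is itself $O(R^{2-n-\gamma})$-close in $\C^2$ to the Schwarzschild metric $g_m$. So the first step is a change of scale: set $\hat x = R^{-1}x$, $\hat g = R^{-2}g(Rx)$, and observe that in these rescaled coordinates $\Sigma$ becomes the graph of $\hat u(\hat x) = R^{-1}u(R\hat x)$ over the unit sphere $S_1(0)$, with $\|\hat u\|_{\C^{2,\alpha}(S_1)}\le c\tau$ by the defining inequality of $\B_{R,\tau,C}$, and $\hat g$ differs from the rescaled Schwarzschild metric $\hat g_m$ by a tensor whose $\C^2$ norm is $O(R^{2-n-\gamma})$. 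All intrinsic and extrinsic curvature quantities scale by explicit powers of $R$: $H$ scales like $R^{-1}$, the second fundamental form like $R^{-1}$, and $\Rm$, $\Ric$ like $R^{-2}$, so an estimate $|\Rm|\le cR^{-n}$ is equivalent to $|\widehat{\Rm}|\le cR^{2-n}$ for the rescaled metric, which is immediate from \eqref{eq:class_metric_assumption} since $\hat g_m$ has curvature $O(R^{2-n})$ on the annulus $\{\tfrac12\le|\hat x|\le 2\}$ and the $\C^2$-perturbation contributes at the same order. This disposes of the $|\Rm|\le cR^{-n}$ bound and, combined with the explicit Schwarzschild Ricci curvature computation from Appendix \ref{sec:geometrySchwarzschild}, of \eqref{eq:curvest_RicciNN}: the leading term $-\tfrac{(n-1)(n-2)m}{R^n}$ is exactly $\Ric_{g_m}(\nu,\nu)$ evaluated on a coordinate sphere (up to the $\tau R^{-n}$ error coming from tilting $\nu$ away from the radial direction), and the remaining discrepancy is controlled by the $\gamma$-decay of $g-g_m$.

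The second step is the mean curvature and trace-free second fundamental form estimates. For the exact round sphere $S_R(0)$ in the flat metric, $H = (n-1)/R$ and $\tf=0$; the Schwarzschild conformal factor $(1+\tfrac{m}{2r^{n-2}})^{4/(n-2)}$ is radial, so centered coordinate spheres are umbilic in $g_m$ as well, with $H_{g_m}$ differing from $(n-1)/R$ by a term of order $mR^{1-n}$. The graph function $u$ and the metric perturbation each perturb $H$ and $\tf$ by amounts controlled by their $\C^2$ sizes: $u$ contributes $O(\tau R^{-1})$ (the linearization of mean curvature in the graph direction involves two derivatives of $u$, each unit-size after rescaling, producing the full $\tau R^{-1}$), and the metric perturbation contributes $O(R^{-n-\gamma})$ together with, for $H$ itself, the $O(mR^{1-n})$ Schwarzschild term which is still small compared to $R^{-1}$. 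Hence $|\tf|\le c(\tau R^{-1} + R^{1-n-\gamma})$ and $H$ stays within a factor of $2$ of $(n-1)/R$ once $R_0$ is large and $\tau_0$ small; here I would absorb the Schwarzschild $O(mR^{1-n})$ piece into the lower-order part of the displayed bound, or simply note it is dominated. The last estimate, $|\iota^*_\Sigma(\Rm\lfloor\nu)|\le c(\tau R^{-n}+R^{-n-\gamma})$, is handled by the same mechanism: in Schwarzschild, $\Rm\lfloor\nu$ pulled back to a centered coordinate sphere vanishes by the rotational symmetry — the radial direction is an eigendirection of the curvature operator and the mixed components $\Rm(\cdot,\cdot,\cdot,\partial_r)$ tangent to $S_r$ vanish — so the only contributions are from tilting $\nu$ (order $\tau\cdot|\Rm_{g_m}| = \tau R^{-n}$) and from the non-symmetric part of $g-g_m$ (order $R^{-n-\gamma}$, since curvature involves two derivatives of the metric, costing $R^{-2}$ off the $R^{2-n-\gamma}$ bound on $g-g_m$).

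Carrying this out in detail means writing the mean curvature, second fundamental form, and curvature tensor of $\graph(u)$ as explicit functions of $u$, $Du$, $D^2u$ and $g_{ij}$, $\partial g_{ij}$, $\partial^2 g_{ij}$ on the annulus, expanding around $(u,g)=(0,g_m)$, and bookkeeping the $R$-powers; this is lengthy but mechanical, and I would present only the structure of the expansion plus the key vanishing facts for Schwarzschild. The main obstacle is not any single estimate but the consistent tracking of which error terms are $O(\tau R^{-\bullet})$ versus $O(R^{-\bullet-\gamma})$: the two small parameters $\tau$ and $R^{-\gamma}$ play different roles (one is the a priori graph bound, fixed small; the other decays), and the curvature estimate \eqref{eq:curvest_RicciNN} in particular must isolate the Schwarzschild leading term $-\tfrac{(n-1)(n-2)m}{R^n}$ exactly, so one cannot be cavalier about lumping the $m$-dependent Schwarzschild contributions into the error. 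Establishing precisely that $\Ric_{g_m}(\partial_r,\partial_r) = -\tfrac{(n-1)(n-2)m}{r^n} + O(r^{-2n+2})$ on centered spheres — a direct computation from the conformal formula for the Ricci tensor, reviewed in the appendix — and then showing the graph tilt and the metric error only perturb this at the stated orders, is the technical heart of the proof.
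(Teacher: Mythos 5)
Your outline is correct and is exactly the argument the paper intends: the proposition is stated without proof precisely because it reduces to the routine perturbation expansion you describe, with the explicit Schwarzschild formulas collected in Appendix \ref{sec:geometrySchwarzschild} (umbilicity of centered spheres, $\Ric_{g_m}(\nu,\nu)=-\tfrac{(n-1)(n-2)m}{r^n}+O(r^{2-2n})$, and the vanishing of the tangential components of $\Rm_{g_m}\lfloor\partial_r$ by the Kulkarni--Nomizu structure) supplying the leading terms, and the $\C^{2}$ bounds on $u$ and on $g-g_m$ from the definition of $\B_{R,\tau,C}$ supplying the $O(\tau R^{-\bullet})$ and $O(R^{-\bullet-\gamma})$ errors. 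Your bookkeeping of which derivatives enter each quantity (two of $u$ and one of $g$ for $h$; two of $g$ for curvature) is consistent with the stated exponents, so the proposal stands as written.
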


\begin{lemma}[J. Simons' identity]  
  \label{lemma:simons_orig}
  Let $\Sigma$ be a hypersurface of a Riemannian manifold $(M, g)$ with induced metric
  $\gt_{ij}$, second fundamental form $h_{ij}$, and mean curvature $H = \bar g^{ij} h_{ij}$.  Then
  \begin{equation*}
    \begin{split}
      \Lapt h_{ij}
      &=
      \nabt^2_{ij} H
      + H h_i^k h_{kj}
      - |h|^2 h_{ij}
      + h_i^k {(\iota_\Sigma^* \Rm)^l}_{jlk}
      + h^{kl} (\iota_\Sigma^* \Rm)_{kijl}
      \\
      &\qquad
      + \nabt_j (\iota_\Sigma^* (\Ric \lfloor \nu)_i)
      + \nabt^k ( \iota_\Sigma^* (\Rm \lfloor \nu)_{ijk} ).
    \end{split}
    \end{equation*}
\end{lemma}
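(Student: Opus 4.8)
The plan is to derive the identity by a direct tensor computation, commuting covariant derivatives on the second fundamental form and then substituting the Gauss and Codazzi equations to convert ambient curvature terms into the stated form. I would work in a local orthonormal frame $e_1,\dots,e_{n-1}$ for $\Sigma$, completed by the unit normal $\nu$, and normal coordinates on $\Sigma$ at the point of interest so that $\nabt e_i = 0$ there and the first derivatives of $\gt_{ij}$ vanish; this keeps the bookkeeping manageable while losing no generality since the final statement is tensorial.

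The key steps, in order, would be as follows. First, recall the Codazzi equation $\nabt_i h_{jk} - \nabt_j h_{ik} = -(\iota_\Sigma^*(\Rm \lfloor \nu))_{ijk}$ (up to the sign convention fixed in Appendix \ref{sec:standard}), which already exhibits the antisymmetric part of $\nabt h$ in terms of the normal component of the ambient curvature. Second, compute $\Lapt h_{ij} = \gt^{kl}\nabt_k \nabt_l h_{ij}$ by writing $\nabt_l h_{ij} = \nabt_i h_{lj} + (\text{Codazzi term})$, so that $\nabt_k\nabt_l h_{ij} = \nabt_k\nabt_i h_{lj} + \nabt_k(\text{Codazzi term})$; this produces the $\nabt^k(\iota_\Sigma^*(\Rm\lfloor\nu)_{ijk})$ term. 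Third, commute the two derivatives in $\nabt_k\nabt_i h_{lj}$ using the Ricci identity on $\Sigma$, which introduces the intrinsic Riemann tensor of $\Sigma$ contracted against $h$; then trace over $k,l$ and use the Gauss equation to replace the intrinsic curvature $\overline{\Rm}_\Sigma$ by the ambient $\iota_\Sigma^*\Rm$ plus the quadratic Gauss terms $h_i^k h_{kj}$ etc. Fourth, after the trace we are left with a term of the form $\nabt_k\nabt_i h_{lj}\,\gt^{kl}$ which we recognize, after one more commutation and use of Codazzi in the form $\nabt_l h_{kj} = \nabt_k h_{lj} + \cdots$, as $\nabt_i(\gt^{kl}\nabt_k h_{lj}) = \nabt_i\nabt_j H + \nabt_i(\iota_\Sigma^*(\Ric\lfloor\nu)_j) + \cdots$, the last equality being the traced Codazzi equation $\gt^{kl}\nabt_k h_{lj} = \nabt_j H + \iota_\Sigma^*(\Ric\lfloor\nu)_j$. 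Collecting all terms and carefully matching indices and signs yields the stated formula.

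The main obstacle is purely organizational rather than conceptual: keeping consistent track of the sign and index conventions (which index the curvature is contracted on — the paper specifies "the first index"), and ensuring that the several applications of Codazzi and the Ricci identity are assembled without dropping or double-counting a term. In particular the two ambient-curvature terms $h_i^k {(\iota_\Sigma^*\Rm)^l}_{jlk} + h^{kl}(\iota_\Sigma^*\Rm)_{kijl}$ come from two different sources — one from the Gauss equation substitution into the commutator, one from the trace of the intrinsic Ricci identity — and verifying they combine with the right coefficients is the delicate point. Since this is a classical identity (cf. Simons, and the standard references on minimal submanifolds), I would ultimately just cite the computation, or relegate the index-chase to the appendix where the conventions are set, rather than reproduce it in the main text.
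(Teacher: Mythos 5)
Your outline is the standard derivation of Simons' identity (trace the Codazzi equation, commute covariant derivatives via the Ricci identity on $\Sigma$, substitute the Gauss equation, and use the traced Codazzi equation to produce the $\nabt^2_{ij}H$ and $\nabt_j(\iota_\Sigma^*(\Ric\lfloor\nu)_i)$ terms), and it is correct; the paper itself states the lemma without proof, treating it as classical, so your plan supplies exactly the computation the paper implicitly relies on. The only caution is the one you already flag: the index and sign conventions (contraction on the first slot, the sign in the Codazzi equation as fixed in Appendix~\ref{sec:standard}) must be tracked consistently so that the two ambient curvature terms $h_i^k{(\iota_\Sigma^*\Rm)^l}_{jlk}$ and $h^{kl}(\iota_\Sigma^*\Rm)_{kijl}$ emerge with the stated coefficients.
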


The corollary below follows from separating $\ff$ into its trace free and pure trace part, $\ff_{ij} = \tf_{ij} + \nth H \gt_{ij}$.

\begin{corollary}
  Assumptions as in Lemma \ref{lemma:simons_orig}. Then
  \begin{equation*}
    \begin{split}
      \Lapt \tf_{ij} = \ &
      (\nabt^2 H - \frac{\Lapt H}{n-1} \bar g)_{ij} + H(\tf_i^k \tf_{kj} -  \nth|\tf|^2 \gt_{ij}) +
      \nth H^2 \tf_{ij} - |\tf|^2 \tf_{ij}
      \\
      &
      + \tf^k_i (\iota_\Sigma^* \Rm)_{ljlk} + \tf^{kl} (\iota_\Sigma^* \Rm)_{kijl} \\
      &+ \nabt_j (\iota_\Sigma^* (\Ric \lfloor \nu)_i)
      + \nabt^k ( \iota_\Sigma^* (\Rm \lfloor \nu)_{ijk} )
    \end{split}
  \end{equation*}
 and
  \begin{equation}
    \label{eq:simons_magnitude}
    \begin{split}
      \frac{1}{2} \Lapt |\tf|^2
      = \ &
      \la \tf, \nabt^2 H \ra + |\nabt\tf|^2 + H\tr \tf^3  + \nth
      H^2 |\tf|^2 - |\tf|^4
      \\
      &
      + \tf^{ij} \tf_{ik} (\iota_\Sigma^* \Rm)_{ljlk} + \tf^{ij} \tf^{kl} (\iota_\Sigma^* \Rm)_{kijl} \\
      & + \tf^{ij} \nabt_j (\iota_\Sigma^* (\Ric \lfloor \nu)_i)+ \tf^{ij}
 \nabt^k ( \iota_\Sigma^* (\Rm \lfloor \nu)_{ijk} ).
    \end{split}
  \end{equation}
  Here, $\langle \cdot, \cdot \rangle$ denotes the inner product with respect to $\bar g$. 
\end{corollary}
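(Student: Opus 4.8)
The plan is to substitute the decomposition $\ff_{ij} = \tf_{ij} + \nth \mc \gt_{ij}$ into every term of J.~Simons' identity (Lemma~\ref{lemma:simons_orig}) and to separate pure trace from trace free parts, using that the induced metric is parallel (so $\Lapt \gt_{ij} = 0$ and $\nabt_k \gt_{ij} = 0$), that $\tf$ is trace free ($\gt^{ij}\tf_{ij} = 0$), and the elementary identities $|\ff|^2 = |\tf|^2 + \nth \mc^2$ and $\ff_i^k \ff_{kj} = \tf_i^k \tf_{kj} + \tfrac{2}{n-1}\mc\,\tf_{ij} + \tfrac{1}{(n-1)^2}\mc^2 \gt_{ij}$. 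First I would rewrite the left hand side as $\Lapt \ff_{ij} = \Lapt \tf_{ij} + \nth (\Lapt \mc)\gt_{ij}$ and move the pure trace summand to the right; combined with the Hessian term $\nabt^2_{ij}\mc$ in Simons' identity this produces exactly $(\nabt^2 \mc - \tfrac{\Lapt \mc}{n-1}\gt)_{ij}$.

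Next I would expand the curvature-free quadratic terms $\mc\,\ff_i^k \ff_{kj} - |\ff|^2 \ff_{ij}$ with the two algebraic identities above: the $\mc^3 \gt_{ij}$ contributions cancel, the two $\mc^2 \tf_{ij}$ contributions combine with coefficient $\nth$, and what is left is precisely $\mc(\tf_i^k \tf_{kj} - \nth |\tf|^2 \gt_{ij}) + \nth \mc^2 \tf_{ij} - |\tf|^2 \tf_{ij}$. The two terms $\nabt_j(\iota_\Sigma^*(\Ric \lfloor \nu)_i)$ and $\nabt^k(\iota_\Sigma^*(\Rm\lfloor\nu)_{ijk})$ contain no $\ff$ and are carried over unchanged.

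The step that requires genuine care is the pair of curvature terms $\ff_i^k (\iota_\Sigma^*\Rm)^l{}_{jlk} + \ff^{kl}(\iota_\Sigma^*\Rm)_{kijl}$. Substituting $\ff = \tf + \nth \mc\, \gt$, the trace free parts give $\tf^k_i (\iota_\Sigma^*\Rm)_{ljlk} + \tf^{kl}(\iota_\Sigma^*\Rm)_{kijl}$ as in the statement, while the pure trace parts contribute $\nth \mc$ times $\gt^{lk}(\iota_\Sigma^*\Rm)_{kjli} + \gt^{kl}(\iota_\Sigma^*\Rm)_{kijl}$. Since the all-tangential restriction $\iota_\Sigma^*\Rm$ inherits the algebraic symmetries of the ambient curvature tensor — antisymmetry in the last two slots and symmetry under exchange of the first pair of slots with the second — one checks by relabeling that these two traces are negatives of one another, so they cancel. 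This is why no term of the form $\mc \cdot(\text{tangential Ricci})$ appears in the corollary, and I expect this cancellation (together with keeping track of index positions and the convention that contractions are taken on the first index, as stated in Proposition~\ref{prop:geometry_sigma}) to be the only place where errors could realistically enter.

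Finally, \eqref{eq:simons_magnitude} follows by contracting the first identity with $\tf^{ij}$ and using the product rule $\tfrac{1}{2}\Lapt|\tf|^2 = \tf^{ij}\Lapt\tf_{ij} + |\nabt\tf|^2$. Because $\tf$ is trace free, every pure trace factor is annihilated: $\tf^{ij}(\nabt^2 \mc - \tfrac{\Lapt \mc}{n-1}\gt)_{ij} = \la \tf, \nabt^2 \mc\ra$, the term $\nth |\tf|^2 \gt_{ij}$ inside the bracket drops out, $\mc\,\tf^{ij}\tf_i^k \tf_{kj} = \mc \tr \tf^3$, $\nth \mc^2 \tf^{ij}\tf_{ij} = \nth \mc^2 |\tf|^2$, and $|\tf|^2 \tf^{ij}\tf_{ij} = |\tf|^4$; the curvature terms are simply paired with $\tf^{ij}$. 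This yields \eqref{eq:simons_magnitude}.
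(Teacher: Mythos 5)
Your proof is correct and is exactly the computation the paper intends: the paper's entire justification is the one sentence that the corollary "follows from separating $\ff$ into its trace free and pure trace part," and you have carried out that decomposition in full, including the one genuinely delicate point (the cancellation of the two pure-trace curvature contractions via the pair symmetry and the antisymmetry of $\iota_\Sigma^*\Rm$), which checks out.
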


\begin{lemma}
  \label{lem:estimate_calculation}
  Given $C>0$, there exist $R_0>0$ and $\tau_0\in(0,\half)$ such that
  for all $R>R_0$ and $\tau\in(0,\tau_0)$ the following holds: If $(u, g) \in \B_{R, \tau, C}$ is such that $\Sigma = \graph (u)$ has constant mean curvature $H$ with respect to $g$, and if $v$ is a non-negative Lipschitz function on $\Sigma$, then
  \begin{equation*}
    \begin{split}
      &\int_\Sigma \half \la \nabt v, \nabt |\tf|^2 \ra +
      \tfrac{1}{2(n-1)} v H^2 |\tf|^2 \dmu_g
      \\
      &\qquad
      \leq
      c \int_\Sigma
      |\nabt v| |\tf|| \iota_\Sigma^*(\Riem \lfloor \nu)|
      + v | \iota_\Sigma^*(\Riem \lfloor \nu) |^2 \dmu_g.
    \end{split}
  \end{equation*}
  Here, $c>0$ denotes a constant which only depends on $n$ and
  $C$. 
\begin{proof}  
  Using Proposition \ref{prop:geometry_sigma} we can estimate
  \begin{equation*}
    |H\tr \tf^3| + |\tf|^4 + |\tf^{ij}\tf_{ik} (\iota_\Sigma^* \Riem)_{ljlk}| +
    |\tf^{ij}\tf^{kl}(\iota_\Sigma^* \Riem)_{kijl}|
      \leq
      \tfrac{1}{2(n-1)} H^2 |\tf|^2 
  \end{equation*}
  provided that $\tau_0 \in (0, \frac{1}{2})$ is small enough and $R_0>0$ is large enough. In conjunction with
  \eqref{eq:simons_magnitude} we obtain the differential inequality
  \begin{equation*}
    - \half \Lapt|\tf|^2 + \tfrac{1}{2(n-1)} H^2 |\tf|^2  +
    |\nabt\tf|^2
    \leq
    - \tf^{ij} \nabt_j (\iota_\Sigma^* (\Ric \lfloor \nu)_i)- \tf^{ij}\nabt^k ( \iota_\Sigma^* (\Rm \lfloor \nu)_{ijk}             ).
  \end{equation*}
  We multiply this inequality with $v$ and integrate over $\Sigma$. Upon an integration by parts of the
  first term on the left and the two terms on the right, we obtain
  \begin{equation*}
    \begin{split}
      &\int_\Sigma \half \la \nabt v, \nabt |\tf|^2 \ra +
      \tfrac{1}{2(n-1)} v H^2 |\tf|^2 + v |\nabt\tf|^2 \dmu_g
      \\
      &\qquad
      \leq
      \int_\Sigma \nabt v * \tf *  \iota_\Sigma^*(\Riem \lfloor \nu) + v * \nabt \tf *
       \iota_\Sigma^*(\Riem \lfloor \nu) \dmu_g
      \\
      &\qquad
      \leq
      \int_\Sigma \nabt v * \tf * \iota_\Sigma^*(\Riem \lfloor \nu) + \half v |\nabt\tf|^2 + c v | \iota_\Sigma^*(\Riem \lfloor \nu)|^2 \dmu_g.
    \end{split}
  \end{equation*}
 \end{proof}
\end{lemma}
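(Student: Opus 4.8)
The plan is to combine J.~Simons' identity, in the trace-free form recorded in \eqref{eq:simons_magnitude}, with the curvature estimates of Proposition~\ref{prop:geometry_sigma}, and then to integrate against the weight $v$. First I would note that on a constant-mean-curvature surface $\Sigma$ the term $\la \tf, \nabt^2 H\ra$ in \eqref{eq:simons_magnitude} vanishes and the term $\nth H^2|\tf|^2$ is nonnegative. The strategy is then to absorb all the remaining cubic-and-higher terms in $\tf$ together with the intrinsic-curvature terms $\tf^{ij}\tf_{ik}(\iota_\Sigma^*\Rm)_{ljlk}$ and $\tf^{ij}\tf^{kl}(\iota_\Sigma^*\Rm)_{kijl}$ into $\nth H^2|\tf|^2$, at the cost of halving its coefficient to $\tfrac{1}{2(n-1)}H^2$. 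This is exactly where Proposition~\ref{prop:geometry_sigma} enters: since $|HR|$ is bounded below by $\tfrac{n-1}{2}$, we have $H^2|\tf|^2 \gtrsim R^{-2}|\tf|^2$, while $|\tf| \leq c(\tau R^{-1}+R^{1-n-\gamma})$ and $|\Rm|\leq cR^{-n}$, so each of the terms to be absorbed is of lower order in $R^{-1}$ (and carries a small factor $\tau$ or $R^{-\gamma}$), hence is dominated by $\tfrac{1}{2(n-1)}H^2|\tf|^2$ once $\tau_0$ is small and $R_0$ is large, depending only on $n$ and $C$. What survives on the right-hand side of the pointwise inequality is $+|\nabt\tf|^2$ on the left and the two divergence terms $-\tf^{ij}\nabt_j(\iota_\Sigma^*(\Ric\lfloor\nu)_i) - \tf^{ij}\nabt^k(\iota_\Sigma^*(\Rm\lfloor\nu)_{ijk})$ on the right, giving the differential inequality
\begin{equation*}
  -\half\Lapt|\tf|^2 + \tfrac{1}{2(n-1)}H^2|\tf|^2 + |\nabt\tf|^2 \leq -\tf^{ij}\nabt_j(\iota_\Sigma^*(\Ric\lfloor\nu)_i) - \tf^{ij}\nabt^k(\iota_\Sigma^*(\Rm\lfloor\nu)_{ijk}).
\end{equation*}

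Next I would multiply this inequality by the nonnegative Lipschitz function $v$ and integrate over the closed surface $\Sigma$. Integrating by parts moves a derivative off $\Lapt|\tf|^2$ onto $v$, producing $\int_\Sigma \half\la\nabt v,\nabt|\tf|^2\ra\dmu_g$; there are no boundary terms since $\Sigma$ is closed. For the two divergence terms on the right I would likewise integrate by parts, moving $\nabt_j$ and $\nabt^k$ onto $v\tf^{ij}$; this distributes onto $\nabt v * \tf * \iota_\Sigma^*(\Rm\lfloor\nu)$ and $v * \nabt\tf * \iota_\Sigma^*(\Rm\lfloor\nu)$, where I am using the $*$-notation to suppress harmless contractions and constants, and where I am absorbing the Ricci-type term $\iota_\Sigma^*(\Ric\lfloor\nu)$ into $\iota_\Sigma^*(\Rm\lfloor\nu)$ since one controls the other up to the estimates in Proposition~\ref{prop:geometry_sigma}. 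The only genuinely new ingredient in this second stage is the Cauchy–Schwarz (Peter–Paul) estimate applied to the term $v * \nabt\tf * \iota_\Sigma^*(\Rm\lfloor\nu)$: bound it by $\half v|\nabt\tf|^2 + c\, v|\iota_\Sigma^*(\Rm\lfloor\nu)|^2$, and then the $\half v|\nabt\tf|^2$ is absorbed by the full $v|\nabt\tf|^2$ sitting on the left-hand side. After dropping the remaining nonnegative $\half\int_\Sigma v|\nabt\tf|^2\dmu_g$, we are left precisely with the claimed inequality.

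The main obstacle, such as it is, is the bookkeeping in the absorption step: one must check that \emph{every} term among $H\tr\tf^3$, $|\tf|^4$, $\tf^{ij}\tf_{ik}(\iota_\Sigma^*\Rm)_{ljlk}$, and $\tf^{ij}\tf^{kl}(\iota_\Sigma^*\Rm)_{kijl}$ is, after using Proposition~\ref{prop:geometry_sigma}, bounded by $\tfrac{1}{2(n-1)}H^2|\tf|^2$ with a constant that goes to zero with $\tau$ and $R^{-1}$, so that the sum of all of them is genuinely smaller. Since $H^2 \geq \tfrac{(n-1)^2}{4}R^{-2}$ while $|\tf| = O(\tau R^{-1}) + O(R^{1-n-\gamma})$ and $|\Rm| = O(R^{-n})$, each such term is $|\tf|^2$ times either $|\tf|^2$, $H|\tf|$, or $|\Rm|$, all of which are $o(R^{-2})$ and in fact $O(\tau R^{-2}) + O(R^{-2-\gamma})$; hence choosing $\tau_0$ small and $R_0$ large (depending only on $n$ and $C$) suffices. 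One small care point worth flagging is that the inequality we apply $v$ to is a distributional/weak inequality for the Lipschitz test function $v$, which is legitimate because $|\tf|^2$ is smooth (as $\Sigma = \graph(u)$ with $u \in \C^{2,\alpha}$, and CMC graphs are smooth by elliptic regularity) so the integration by parts against $v$ is valid.
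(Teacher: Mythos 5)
Your proposal is correct and follows essentially the same route as the paper's proof: Simons' identity in the trace-free form \eqref{eq:simons_magnitude} with $\nabt^2 H = 0$, absorption of the cubic-and-higher and intrinsic curvature terms into $\tfrac{1}{2(n-1)}H^2|\tf|^2$ via Proposition~\ref{prop:geometry_sigma}, multiplication by $v$, integration by parts on the Laplacian and the two divergence terms, and a Peter--Paul absorption of $\half v|\nabt\tf|^2$ into the left-hand side. The order-of-magnitude bookkeeping you carry out for the absorption step is accurate.
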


The result below corresponds to \cite[Lemma 5.6]{Huisken-Yau:1996},
where a variant of the iteration technique of
\cite{Schoen-Simon-Yau:1975} for volume preserving stable constant
mean curvature surfaces is applied to obtain curvature estimates. Here,
we use only that the surfaces have constant mean curvature, along with
J. Simons' identity and a standard Stampacchia iteration. Another ingredient
in our proof is an insight from \cite{Metzger:2007} related to an integration
by parts on certain covariant derivatives of curvature that appear
contracted with the tracefree part of the second fundamental form in
J. Simons' identity. This is applied to the effect that we get by
assuming that $(M, g)$ is $\C^2$-asymptotic to Schwarzschild of mass $m>0$, rather
than $\C^4$-asymptotic as in \cite{Huisken-Yau:1996}.

\begin{proposition}
  \label{prop:curvature_estimate}
  Given $C>0$, there exist $R_0>0$ and $\tau_0\in(0,\half)$ such that
  for all $R>R_0$ and $\tau\in(0,\tau_0)$ the following holds: If $(u, g) \in \B_{R, \tau, C}$ is such that $\Sigma = \graph (u)$ has constant mean curvature $H$ with respect to $g$,
  then
  \begin{equation*}
    |\tf| \leq c (\tau R^{1-n} + R^{1-n-\gamma}),
  \end{equation*}
  where $c$ is a constant that only depends on $C$ and $n$.
\end{proposition}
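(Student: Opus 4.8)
The plan is to run a Stampacchia iteration on the quantity $|\tf|$ using the integral inequality from Lemma \ref{lem:estimate_calculation}, in the spirit of \cite{Schoen-Simon-Yau:1975} as in \cite[Lemma 5.6]{Huisken-Yau:1996}. First I would fix the bad terms: set $f := |\iota_\Sigma^*(\Riem \lfloor \nu)|$, and note that by Proposition \ref{prop:geometry_sigma} one has the pointwise bound $f \leq c(\tau R^{-n} + R^{-n-\gamma})$, together with $HR$ bounded above and below, $|\tf| \leq c(\tau R^{-1} + R^{1-n-\gamma})$ a priori, and the volume/area estimate $\H^{n-1}_g(\Sigma) \leq c R^{n-1}$. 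Write $w := |\tf|$. From Lemma \ref{lem:estimate_calculation}, applied with $v = \varphi^2 (w - k)_+$ for $k \geq k_0$ and $\varphi$ a cutoff, and using $H^2 \geq c R^{-2}$, I expect to derive an inequality of the form
\begin{equation*}
  \int_\Sigma \varphi^2 |\nabt (w-k)_+|^2 + R^{-2} \varphi^2 (w-k)_+ w \,\dmu_g
  \leq c \int_\Sigma |\nabt \varphi|^2 (w-k)_+^2 + \varphi^2 f^2 \mathbf{1}_{\{w > k\}} + |\nabt\varphi|\varphi (w-k)_+ f \,\dmu_g ,
\end{equation*}
where I have absorbed the $|\nabt\tf|^2$ term and used Young's inequality on the cross terms. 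Since $\Sigma$ is closed we may take $\varphi \equiv 1$, so the gradient-of-cutoff terms drop.

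Next I would combine the two terms on the left: since $w \geq (w-k)_+$, the left side controls $\int |\nabt(w-k)_+|^2 + R^{-2} (w-k)_+^2$, which by the scaled Sobolev inequality on $\Sigma$ (whose geometry is uniformly controlled, $|\Rm| \leq cR^{-n}$ and $\H^{n-1}(\Sigma) \leq cR^{n-1}$, $\diam \Sigma \leq cR$) bounds a scale-invariant $L^{\frac{2(n-1)}{n-3}}$ norm of $(w-k)_+$ (or an $L^p$ norm for any $p < \infty$ if $n = 3$, handled by a minor modification). On the right, $\int_{\{w>k\}} f^2 \leq \big(\sup f\big)^2 |A_k|$ where $A_k := \{w > k\}$, and the cross term is similarly estimated by Cauchy–Schwarz against $|A_k|$ and the $L^2$ norm of $(w-k)_+$, which is itself absorbed using the $R^{-2}$ term on the left (giving a factor $R^2$). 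This produces the standard decay inequality
\begin{equation*}
  \| (w-k)_+ \|_{L^q(A_k)} \leq C R^{\theta}\, (\sup f)\, |A_k|^{1 + \delta}
\end{equation*}
for suitable exponents $q > 1$ (scale-invariant), $\delta > 0$, $\theta$ determined by scaling, after tracking powers of $R$ carefully. The Stampacchia lemma then gives $(w - k_0)_+ \equiv 0$, i.e. $w \leq k_0$, where $k_0$ is precisely of the order $R^{\theta'} \sup f \cdot |A_{k_0}|^{\delta'} \leq c(\tau R^{1-n} + R^{1-n-\gamma})$ once one plugs in $\sup f \leq c(\tau R^{-n} + R^{-n-\gamma})$ and $|A_{k_0}| \leq \H^{n-1}(\Sigma) \leq cR^{n-1}$ and balances the scaling. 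Choosing $R_0$ large and $\tau_0$ small makes all the smallness assumptions of Lemma \ref{lem:estimate_calculation} and Proposition \ref{prop:geometry_sigma} hold.

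The main obstacle I anticipate is bookkeeping the powers of $R$ so that the final constant comes out as $c(\tau R^{1-n} + R^{1-n-\gamma})$ rather than something weaker: the Sobolev constant on $\Sigma$ scales like $R$, the factor from absorbing the $L^2$ term costs $R^2$, and $|A_k| \leq cR^{n-1}$, so one must verify that these combine with $\sup f \sim R^{-n}$ (up to $\tau$ and $R^{-\gamma}$) to land exactly at $R^{1-n}$. A secondary subtlety is the case $n = 3$, where the Sobolev exponent degenerates; there one uses instead the Michael–Simon / Sobolev inequality in the form giving $L^p$ control for any fixed $p$ (say $p=4$), which suffices to run Stampacchia with a positive $\delta$. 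A final point is that the a priori bound $|\tf| \leq c(\tau R^{-1}+R^{1-n-\gamma})$ from Proposition \ref{prop:geometry_sigma} is what guarantees $(w-k)_+ \in L^\infty$ and that the truncation levels $k$ range over a bounded set, so the iteration is legitimate; one does not actually need this a priori bound to be sharp, only finite.
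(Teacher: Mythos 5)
Your strategy is essentially the paper's: test Lemma \ref{lem:estimate_calculation} with the truncation itself, absorb the gradient term, apply the Michael--Simon--Sobolev inequality (with the separate treatment of $n=3$), and run the Stampacchia iteration using $\sup f\le c(\tau R^{-n}+R^{-n-\gamma})$ and $\H^{n-1}_g(\Sigma)\le cR^{n-1}$. Two points of bookkeeping where the paper differs from your sketch: it iterates on $u=|\tf|^2$ rather than $w=|\tf|$, so that $\la\nabt u_k,\nabt u\ra=|\nabt u|^2$ on $\supp u_k$ with no stray factor of $w$ or $k$; and the cross term in the lemma is $|\nabt v|\,|\tf|\,f$, not $|\nabt v|\,f$, so after Young's inequality the source term is $s\,(\sup f)^2 A(k)$ with $s=\sup|\tf|^2$ (not just $(\sup f)^2 A(k)$ as in your display), and the iteration therefore closes with the self-improving inequality $s\le c\,s^{1/2}(\tau R^{1-n}+R^{1-n-\gamma})$ rather than yielding the bound directly.
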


\begin{proof}
  We let $u := |\tf|^2$ and $s:=\sup_\Sigma u$. Let $u_k := (u-k)_+$,
  where $k\geq 0$ is a constant. Then $\Omega_k := \supp u_k$
  satisfies $\Omega_k \subset \{ u \geq k\}$. On $\Omega_k$ we have
  that $\nabt u_k = \nabt u$, on its complement we have $\nabt u_k = 0$ almost everywhere. Let $A(k):= \area_g(\Omega_k)$.
  
  Using $v= u_k$ as a test function in Lemma \ref{lem:estimate_calculation}, we obtain that
  \begin{multline*}
    \int_{\Omega_k} \half |\nabt u|^2 + \frac{1}{2(n-1)} H^2 |\tf|^2 u_k
    \dmu_g
    \\
    \leq
    c
    \int_{\Omega_k} |\nabt u| |\tf| | \iota_\Sigma^* (\Riem\lfloor \nu) | + u_k
    | \iota_\Sigma^* (\Riem\lfloor \nu)|^2 \dmu_g.
  \end{multline*}
  Using that $u_k \leq u \leq s$ we see that the right hand side can be estimated by 
  \begin{equation*}
    \begin{split}
      \frac{1}{4} \int_{\Omega_k} |\nabt u|^2 \dmu_g
      +
      c A(k) s (\tau^2 R^{-2n} + R^{-2n-2\gamma})
    \end{split}
  \end{equation*}
  so that 
  \begin{equation} \label{eq:estimate_3}
    \int_{\Omega_k} |\nabt u|^2 + H^2 u_k^2 \dmu_g
    \leq
    c A(k) s (\tau^2 R^{-2n} + R^{-2n-2\gamma}).
  \end{equation}
  In dimensions $n>3$ we continue the estimate as follows. We combine
  the H\"older inequality and the Michael-Simon-Sobolev inequality
  \cite[Theorem 2.1]{Michael-Simon:1973} to find that
  \begin{equation*}
    \begin{split}
      \int_{\Omega_k} u_k^2 \dmu_g
      &\leq
      A(k)^{2/(n-1)} \left(\int_{\Omega_k} u_k^{2(n-1)/(n-3)}
        \dmu_g\right)^{(n-3)/(n-1)}
      \\
      &\leq
      c A(k)^{2/(n-1)} \int_{\Omega_k}  |\nabt u|^2 + H^2 u_k^2 \dmu_g.
    \end{split}
  \end{equation*}
  In dimension $n=3$ we estimate
  \begin{equation*}
    \begin{split}
      \int_{\Omega_k} u_k^2 d \H^2_g
      &\leq
      \left(\int_{\Omega_k} |\nabt u| + H |u_k| d \H^2_g \right)^2
      \\
      &\leq
      c A(k) \int_{\Omega_k} |\nabt u|^2 + H^2 |u_k|^2 d \H^2_g.
    \end{split}
  \end{equation*}
  In conjunction with \eqref{eq:estimate_3} we obtain, in both cases,
  that is for all $n\geq 3$, that
  \begin{equation*}
    \int_{\Omega_k} u_k^2 \dmu_g
    \leq
    c A(k)^{(n+1)/(n-1)} s (\tau^2 R^{-2n} + R^{-2n-2\gamma}).
  \end{equation*}
  Let $h>k$. Then
  \begin{equation*}
    \begin{split}
      (h-k)^2 A(h)
      &\leq
      \int_{\Omega_h} u_k^2 \dmu_g
      \leq
      \int_{\Omega_k} u_k^2 \dmu_g
      \\
      &\leq
      c A(k)^{(n+1)/(n-1)} s (\tau^2 R^{-2n}+ R^{-2n-2\gamma}).
    \end{split}
  \end{equation*}
  Thus the function $A$ satisfies the iteration inequality from
  \cite[Lemma B.1]{Kinderlehrer-Stampacchia:2000} with $\alpha=2$,
  $\beta= \frac{n+1}{n-1}$, and $C=c s (\tau^2 R^{-2n}+
  R^{-2n-2\gamma})$. This lemma yields that $A(d) =0$ with
  \begin{equation*}
    d^2
    = 
    c s (\tau^2 R^{-2n} + R^{-2n-2\gamma}) A(0)^{2/(n-1)}.
  \end{equation*}
  Note that $A(0)^{2/(n-1)} = \area_g(\Sigma)^{2/(n-1)} \leq c
  R^2$. Hence
  \begin{equation*}
     \sup |\tf|^2 = s \leq d \leq c s^{1/2} (\tau R^{1-n} + R^{1-n-\gamma}). 
  \end{equation*}
 \end{proof}

\subsection{Eigenvalue estimates}

In this subsection we use the curvature estimate in Proposition \ref{prop:curvature_estimate} to derive precise estimates for the spectrum of the Jacobi operator on constant mean curvature spheres in $\B_{R,\tau,C}$. The strategy is that of \cite[Section 4]{Huisken-Yau:1996} adapted to arbitrary dimensions. Because the details of this adaption are delicate we present the full argument here.  

\begin{lemma}[\protect{Lichnerowicz, e.g. \cite[Section III.4]{Lectures-DG}}]
  \label{lemma:lichnerowicz}
  Assume that $\Sigma$ is a closed $(n-1)$-dimensional manifold with Riemannian metric
  $\gt$ and Ricci curvature $\Rict \geq \kappa \gt$, where $\kappa\geq0$. Then the first non-zero
  eigenvalue $\mu_0$ of $-\Lapt$ satisfies
  \begin{equation*}
    \mu_0 \geq \frac{n-1}{n-2} \kappa.
  \end{equation*}
\end{lemma}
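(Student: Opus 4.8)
The plan is to run the classical Bochner--Lichnerowicz argument on $\Sigma$, being careful that $\dim \Sigma = n-1$ so that the constants come out as stated. Let $f$ be an eigenfunction of $-\Lapt$ on the closed manifold $\Sigma$ with eigenvalue $\mu_0 > 0$, normalized so that $\int_\Sigma f^2 \dmu = 1$. Then $\Lapt f = - \mu_0 f$, and integration by parts gives $\int_\Sigma |\nabt f|^2 \dmu = \mu_0$. If $\mu_0$ does not exist (i.e. $\Sigma$ a point) or $\kappa = 0$ there is nothing to prove, so we may assume $\mu_0 > 0$ and $\kappa > 0$.

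First I would recall the Bochner formula on $\Sigma$,
\begin{equation*}
  \tfrac12 \Lapt |\nabt f|^2 = |\nabt^2 f|^2 + \la \nabt f, \nabt \Lapt f\ra + \Rict(\nabt f, \nabt f),
\end{equation*}
and integrate it over the closed manifold $\Sigma$. The left-hand side integrates to zero, and $\la \nabt f, \nabt \Lapt f\ra = -\mu_0 |\nabt f|^2$, so
\begin{equation*}
  0 = \int_\Sigma |\nabt^2 f|^2 \dmu - \mu_0 \int_\Sigma |\nabt f|^2 \dmu + \int_\Sigma \Rict(\nabt f, \nabt f) \dmu.
\end{equation*}
Now I invoke the two pointwise estimates. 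Since $\Sigma$ has dimension $n-1$, the Cauchy--Schwarz inequality applied to the trace of $\nabt^2 f$ yields $|\nabt^2 f|^2 \geq \tfrac{1}{n-1}(\Lapt f)^2 = \tfrac{\mu_0^2}{n-1} f^2$; and the hypothesis $\Rict \geq \kappa \gt$ gives $\Rict(\nabt f, \nabt f) \geq \kappa |\nabt f|^2$. Substituting these and using $\int_\Sigma f^2 \dmu = 1$ and $\int_\Sigma |\nabt f|^2 \dmu = \mu_0$, we obtain
\begin{equation*}
  0 \geq \frac{\mu_0^2}{n-1} - \mu_0^2 + \kappa \mu_0 = -\,\frac{n-2}{n-1}\,\mu_0^2 + \kappa \mu_0,
\end{equation*}
and dividing by $\mu_0 > 0$ and rearranging gives $\mu_0 \geq \tfrac{n-1}{n-2}\kappa$, as claimed.

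There is no genuine obstacle in this argument; it is a textbook computation (as the attribution to Lichnerowicz indicates), and I would keep the write-up short. The only point requiring care is the dimensional bookkeeping: the factor $n-1$ appearing in the trace inequality $|\nabt^2 f|^2 \geq (\Lapt f)^2/(n-1)$ is the dimension of $\Sigma$ rather than that of the ambient manifold $M$, and this is precisely what produces the sharp coefficient $\tfrac{n-1}{n-2}$ in the conclusion. One could alternatively cite the reference \cite{Lectures-DG} directly, but including the three-line proof makes the paper self-contained.
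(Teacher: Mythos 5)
Your proof is correct: it is the classical Bochner--Lichnerowicz argument, with the dimensional constant $n-1=\dim\Sigma$ handled properly, which is exactly the argument the paper implicitly invokes by citing Lichnerowicz (the paper itself gives no proof, only the reference). No issues.
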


\begin{proposition} [\protect{Cf. \cite[Lemma 3.13]{Huisken-Yau:1996}}]
  \label{prop:eigenvalue_lap} Given $C>0$, there exist $R_0 >0$ and
  $\tau_0 \in (0, \frac{1}{2})$ such that for all $R > R_0$ and $\tau
  \in (0, \tau_0)$, if $(u, g) \in \B_{R, \tau, C}$ and if $\Sigma =
  \graph(u)$ has constant mean curvature $H$ with respect to $g$, then
  the first eigenvalue $\mu_0$ of $-\Lapt$ on $\Sigma$ satisfies the
  estimate
  \begin{equation*}
    \mu_0 \geq \frac{H^2}{n-1}  + \frac{2(n-1) m}{R^n} - O(\tau R^{-n} +
R^{-n-\gamma}).
  \end{equation*}
\end{proposition}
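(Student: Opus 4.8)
The plan is to compare the Rayleigh quotient characterization of $\mu_0$ with the Lichnerowicz estimate of Lemma \ref{lemma:lichnerowicz}, after controlling the induced Ricci curvature of $\Sigma$ from below. First I would use the Gauss equation together with the curvature estimates of Proposition \ref{prop:geometry_sigma} to express the intrinsic Ricci tensor $\Rict$ of $\Sigma$. Writing the second fundamental form as $\ff_{ij} = \tf_{ij} + \nth H \gt_{ij}$, the Gauss equation gives, schematically, $\Rict_{ij} = H \ff_{ij} - \ff_i^k \ff_{kj} + (\text{tangential ambient curvature terms})$; collecting the pure-trace contribution of $\ff$ yields a term $\frac{n-2}{n-1} H^2 \gt_{ij}$, while the cross terms and $\tf \ast \tf$ terms are controlled by $|\tf| \leq c(\tau R^{1-n} + R^{1-n-\gamma})$ from Proposition \ref{prop:curvature_estimate}, hence are $O(\tau R^{-n} + R^{-n-\gamma})$ once multiplied against $|H R| \sim n-1$. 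The ambient curvature terms $\iota_\Sigma^* \Rm$ are $O(R^{-n})$ by Proposition \ref{prop:geometry_sigma}, but this bound alone is not sharp enough: I need the \emph{leading order} of the ambient sectional curvatures tangent to $\Sigma$, which for Schwarzschild contribute a definite positive amount. This is where the estimate \eqref{eq:curvest_RicciNN} on $\Ric(\nu,\nu)$ enters — combined with $\Scal$, which is $O(R^{-n-\gamma})$ since $g$ is $\C^2$-asymptotic to Schwarzschild and the Schwarzschild metric is scalar flat, one extracts $\Ric^g(e_i,e_i)$ summed over a tangent frame, and thus the trace of the tangential ambient Ricci contribution, up to errors $O(\tau R^{-n} + R^{-n-\gamma})$.

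The outcome of this computation should be a lower bound $\Rict \geq \kappa \gt$ with $\kappa = \frac{n-2}{n-1}\left(\frac{H^2}{1} + \text{something}\right)/\ldots$; more precisely I expect $\kappa = \frac{n-2}{n-1}\left( \frac{H^2}{n-1} + \frac{2(n-1)m}{R^n}\right) - O(\tau R^{-n} + R^{-n-\gamma})$, arranged so that Lemma \ref{lemma:lichnerowicz} gives exactly $\mu_0 \geq \frac{n-1}{n-2}\kappa = \frac{H^2}{n-1} + \frac{2(n-1)m}{R^n} - O(\tau R^{-n} + R^{-n-\gamma})$. To get the coefficient $2(n-1)m/R^n$ correctly I must track the Schwarzschild tangential sectional curvatures. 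A clean route: the scalar curvature $\Scal^g$ decomposes as $\Scal^\Sigma + 2\Ric^g(\nu,\nu) - H^2 + |\ff|^2$ along $\Sigma$ (the contracted Gauss equation), so $\Scal^\Sigma = \Scal^g - 2\Ric^g(\nu,\nu) + H^2 - |\ff|^2$. Using $\Scal^g = O(R^{-n-\gamma})$, estimate \eqref{eq:curvest_RicciNN}, $|\ff|^2 = \nth H^2 + |\tf|^2 = \nth H^2 + O(\tau^2 R^{-2n} + R^{-2n-2\gamma})$, this gives the \emph{total} scalar curvature of $\Sigma$; then, since $\Sigma$ is nearly round so that $\Rict$ is nearly a multiple of $\gt$ (the non-umbilicity and non-homogeneity of the ambient curvature only perturb this by $O(\tau R^{-n} + R^{-n-\gamma})$ after using Proposition \ref{prop:curvature_estimate} and the last estimate of Proposition \ref{prop:geometry_sigma}), one reads off $\Rict \geq \frac{\Scal^\Sigma}{n-1}\gt - O(\tau R^{-n}+R^{-n-\gamma})$ and hence $\kappa = \frac{\Scal^\Sigma}{n-1} - O(\cdots)$.

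The main obstacle I anticipate is bookkeeping the leading-order constants: one must verify that $H^2 - \Ric^g(\nu,\nu)$-type combinations actually produce the asserted coefficient $2(n-1)m/R^n$ and not some other multiple of $m/R^n$, which requires knowing $H = (n-1)/R + O(m R^{-n+1})$ and the precise value of $\Ric^g(\nu,\nu) = -(n-1)(n-2)m R^{-n} + O(\cdots)$ from \eqref{eq:curvest_RicciNN} and plugging in carefully. A second, more structural point is justifying that $\Rict$ is pointwise bounded below by $(\text{its normalized trace})\,\gt$ up to the claimed error: this is not automatic from a scalar-curvature bound alone, so I would instead argue directly from the Gauss equation that the traceless part of $\Rict$ is $O(\tau R^{-n} + R^{-n-\gamma})$, using that the traceless part of the ambient tangential curvature is small (the Schwarzschild tangential curvature operator is, to leading order, a multiple of the identity on $\Lambda^2$ of the tangent space of a centered sphere) together with $|\tf|$ small from Proposition \ref{prop:curvature_estimate}. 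Once $\Rict \geq \kappa\gt$ with the right $\kappa$ is in hand, the conclusion is immediate from Lemma \ref{lemma:lichnerowicz}.
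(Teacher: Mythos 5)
Your proposal is correct and follows essentially the same route as the paper: Gauss equation for $\Rict$, the trace/trace-free split of $\ff$ with $|\tf|$ controlled by Proposition \ref{prop:curvature_estimate}, identification of the tangential ambient curvature to leading order via \eqref{eq:curvest_RicciNN} together with the scalar- and conformal flatness of $g_m$, and then Lichnerowicz with $\kappa = \frac{n-2}{(n-1)^2}H^2 + \frac{2(n-2)m}{R^n} - O(\tau R^{-n}+R^{-n-\gamma})$. Your "clean route" via the contracted Gauss equation is only a reorganization of the same computation (and you correctly note that the pointwise bound on the traceless part of $\Rict$ must still come from the Gauss equation directly, which is exactly what the paper does); the constants you track all match.
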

\begin{proof}
  The Ricci curvature of the induced metric $\bar g$ on $\Sigma$ is
  \begin{equation}
    \label{eq:eigen1}
    \begin{split}
      \Rict_{ij}
      &=
      \Ric_{ij} - \Rm_{\nu ij\nu} + H \ff_{jk} - \ff_{ik} \ff_{jl}
      \\
      &=
      \Ric_{ij} - \Rm_{\nu ij\nu} + \frac{n-2}{(n-1)^2} H^2 \gt_{ij}
      + \tfrac{n-3}{n-1} H \tf_{ij} - \tf_{ik}\tf_{jk}. 
    \end{split}
  \end{equation}
  Note that since $g_m$ is scalar flat and conformally flat, the scalar and Weyl curvature of $g$ are of order
  $O(R^{-n-\gamma})$. In particular,
  \begin{equation*}
    \Riem_{\nu ij \nu} = \frac{1}{n-1} \Ric(\nu,\nu) \gt_{ij} +  O(R^{-n-\gamma}). 
  \end{equation*}
  Using Proposition \ref{prop:geometry_sigma}, we find that
  \begin{equation*}
    \begin{split}
      \Ric_{ij} - \Rm_{\nu ij\nu}
      &=
      -\frac{2}{n-1} \Ric(\del_r,\del_r) \gt_{ij} + O(\tau R^{-n} +
R^{-n-\gamma})
      \\
      &= 
      \frac{2(n-2) m}{R^n} \gt_{ij} + O(\tau R^{-n} + R^{-n-\gamma}).
    \end{split}
  \end{equation*}
  In view of Proposition \ref{prop:curvature_estimate}, the last two terms in \eqref{eq:eigen1} are
  of order $O(\tau R^{-n} + R^{-n-\gamma})$. The proposition follows from Lemma \ref{lemma:lichnerowicz} with $\kappa \geq \frac{2(n-2)
    m}{R^n} + \frac{n-2}{(n-1)^2} H^2 - O(\tau R^{-n} + R^{-n-\gamma})$. 
    \end{proof}

\begin{corollary} [Cf. \protect{\cite[Theorem 4.1]{Huisken-Yau:1996}}]
  \label{cor:eigenvalue}
  Assumptions as in Proposition \ref{prop:eigenvalue_lap}. Then the lowest
  eigenvalue of the Jacobi operator
  \begin{equation*}
    Lu = -\Lapt u - \big(|\ff|^2 + \Ric(\nu,\nu)\big)u
  \end{equation*}
  on functions with zero mean, that is 
  \begin{equation*}
    \lambda_1 := \inf\left\{ \int_\Sigma fLf \dmu_g \mid \int_\Sigma
        f^2\dmu_g = 1\ \text{and}\ \int_\Sigma f \dmu_g = 0  \right\},
  \end{equation*}
  satisfies
  \begin{equation*}
    \lambda_1 \geq \frac{n(n-1)m}{R^n} - O(\tau R^{-n} + R^{-n-\gamma}). 
  \end{equation*}
\begin{proof}
  Given $f$ with$\int_\Sigma f^2 \dmu_g =1$ and $\int_\Sigma f \dmu_g = 0$, we use Propositions  \ref{prop:eigenvalue_lap} and \ref{prop:curvature_estimate} and (\ref{eq:curvest_RicciNN}) to obtain that
  \begin{equation*}
    \begin{split}
      &\int_\Sigma fLf \dmu_g
      \\
      &=
      \int_\Sigma |\nabt f|^2 - f^2 (|\ff|^2 + \Ric(\nu,\nu)) \dmu_g
      \\
      &\geq
      \int_\Sigma f^2 \big( \tfrac{1}{n-1} H^2 + \tfrac{2(n-1) m}{R^n} 
      - |\ff|^2 - \Ric(\nu,\nu) - O(\tau R^{-n} + R^{-n-\gamma}) \big) \dmu_g
      \\
      &\geq
      \left( \frac{n (n-1) m}{R^n} - O(\tau R^{-n} + R^{-n-\gamma}) \right)
      \int_\Sigma f^2\dmu_g.
    \end{split}
  \end{equation*}
\end{proof}
\end{corollary}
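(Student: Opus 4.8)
The plan is to estimate $\lambda_1$ directly from its variational characterization, using the spectral gap for $-\Lapt$ established in Proposition \ref{prop:eigenvalue_lap} together with the curvature estimates of Proposition \ref{prop:geometry_sigma}, Proposition \ref{prop:curvature_estimate}, and \eqref{eq:curvest_RicciNN}. Let $f$ be admissible, that is $\int_\Sigma f^2 \dmu_g = 1$ and $\int_\Sigma f \dmu_g = 0$. Since $f$ has zero mean, the Rayleigh quotient bound from Proposition \ref{prop:eigenvalue_lap} gives $\int_\Sigma |\nabt f|^2 \dmu_g \geq \mu_0 \int_\Sigma f^2 \dmu_g \geq \big(\tfrac{H^2}{n-1} + \tfrac{2(n-1)m}{R^n} - O(\tau R^{-n} + R^{-n-\gamma})\big)$. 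Substituting this into $\int_\Sigma f L f \dmu_g = \int_\Sigma |\nabt f|^2 - f^2(|\ff|^2 + \Ric(\nu,\nu))\dmu_g$ reduces the problem to controlling the potential term $-\big(|\ff|^2 + \Ric(\nu,\nu)\big)$ pointwise.

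For the potential, I would first write $|\ff|^2 = \nth H^2 + |\tf|^2$ from the decomposition $\ff_{ij} = \tf_{ij} + \nth H \gt_{ij}$; by Proposition \ref{prop:curvature_estimate}, $|\tf|^2 \leq c(\tau R^{1-n} + R^{1-n-\gamma})^2 = O(\tau^2 R^{-2n} + R^{-2n-2\gamma})$, which is absorbed into the $O(\tau R^{-n} + R^{-n-\gamma})$ error. For $\Ric(\nu,\nu)$, estimate \eqref{eq:curvest_RicciNN} gives $\Ric(\nu,\nu) = -\tfrac{(n-1)(n-2)m}{R^n} + O(\tau R^{-n} + R^{-n-\gamma})$. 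Combining, the coefficient of $f^2$ in the integrand of $\int_\Sigma fLf\dmu_g$ is at least
\begin{equation*}
\tfrac{H^2}{n-1} + \tfrac{2(n-1)m}{R^n} - \tfrac{H^2}{n-1} + \tfrac{(n-1)(n-2)m}{R^n} - O(\tau R^{-n} + R^{-n-\gamma}),
\end{equation*}
where the two $\tfrac{H^2}{n-1}$ terms cancel exactly, leaving $\big(2(n-1) + (n-1)(n-2)\big)\tfrac{m}{R^n} = n(n-1)\tfrac{m}{R^n}$ up to the stated error. Since this lower bound is independent of $f$, taking the infimum over admissible $f$ yields $\lambda_1 \geq \tfrac{n(n-1)m}{R^n} - O(\tau R^{-n} + R^{-n-\gamma})$.

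The only subtlety — and the step that requires the most care rather than being genuinely hard — is making sure that every error term really is of the claimed order $O(\tau R^{-n} + R^{-n-\gamma})$ uniformly, and in particular that the cancellation of the $\tfrac{H^2}{n-1}$ terms is exact (so that no residual positive multiple of $H^2 R^{-2} \sim R^{-2}$, which would dominate $m R^{-n}$ for large $R$, survives). This is why it is essential to use the $-\tfrac{H^2}{n-1}$ coming from the pure-trace part of $|\ff|^2$ to cancel precisely the $+\tfrac{H^2}{n-1}$ appearing in the bound on $\mu_0$; there is no slack here. Given that the preceding propositions have already packaged all the geometric input into clean estimates, the remaining work is just bookkeeping of the error terms, and the result follows.
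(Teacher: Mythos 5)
Your proof is correct and takes essentially the same route as the paper's: both apply the spectral bound of Proposition \ref{prop:eigenvalue_lap} to the zero-mean test function to control the Dirichlet term, split $|\ff|^2 = \nth H^2 + |\tf|^2$ so that the $\tfrac{H^2}{n-1}$ contributions cancel exactly, and use Proposition \ref{prop:curvature_estimate} and \eqref{eq:curvest_RicciNN} to reduce the potential to $\tfrac{(n-1)(n-2)m}{R^n}$ up to the admissible error. The only blemish is the exponent slip $|\tf|^2 = O(\tau^2 R^{2-2n} + R^{2-2n-2\gamma})$ rather than $O(\tau^2 R^{-2n} + R^{-2n-2\gamma})$, which is harmless since $\tau^2 R^{2-2n} \leq \tau R^{-n}$ for $n \geq 3$ and so is still absorbed into the stated error term.
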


\begin{proposition} [Cf. \protect{\cite[Theorem 4.1]{Huisken-Yau:1996}}]
  \label{prop:invertible}
  Assumptions as in 
  Proposition \ref{prop:eigenvalue_lap}. Then the Jacobi operator $L$ on
  $\Sigma$ is invertible with the explicit bound $\mu_1 \geq
  \frac{n(n-1)m}{R^n} - O(\tau R^{-n} + R^{-n-\gamma})$ for the eigenvalue of
least
  absolute value.
\end{proposition}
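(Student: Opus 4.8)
The plan is to deduce the claimed bound on the least-absolute-value eigenvalue $\mu_1$ of the Jacobi operator $L$ directly from the two eigenvalue estimates already established, namely Corollary \ref{cor:eigenvalue} for the lowest eigenvalue $\lambda_1$ on the mean-zero subspace, and Corollary \ref{cor:eigenvalue}'s ingredients together with the curvature estimates for the constant function. Recall that $L u = -\Lapt u - (|\ff|^2 + \Ric(\nu,\nu))u$, and that the constant functions form a one-dimensional subspace on which $-\Lapt$ vanishes; the $L^2$-orthogonal complement of the constants is exactly the mean-zero subspace that appears in Corollary \ref{cor:eigenvalue}. So the spectrum of $L$ splits: on the mean-zero space the Rayleigh quotient is bounded below by $\frac{n(n-1)m}{R^n} - O(\tau R^{-n} + R^{-n-\gamma})$, which in particular is positive once $\tau_0$ is small and $R_0$ large (depending on $m$, $n$, $C$); and on the constants the value of the quadratic form is $-(|\ff|^2 + \Ric(\nu,\nu))$, integrated against $f^2$.

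Next I would pin down the eigenvalue coming from the constant function. Using \eqref{eq:curvest_RicciNN} from Proposition \ref{prop:geometry_sigma}, we have $\Ric(\nu,\nu) = -\frac{(n-1)(n-2)m}{R^n} + O(\tau R^{-n} + R^{-n-\gamma})$, and Proposition \ref{prop:curvature_estimate} gives $|\ff|^2 = \nth H^2 + O(\tau^2 R^{-2n} + R^{-2n-2\gamma}) = \nth H^2 + O(\tau R^{-n} + R^{-n-\gamma})$, while $H^2 = O(R^{-2})$ is of lower order; more carefully, $\frac{1}{n-1}H^2$ is positive and of order $R^{-2}$, which dominates $R^{-n}$ for $n \geq 3$ only when $n>2$— wait, for $n\geq3$ we have $R^{-2} \gg R^{-n}$, so I must be careful. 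Actually the mean curvature $H$ of a graph over $S_R$ is close to $\frac{n-1}{R}$, so $\frac{1}{n-1}H^2 \approx \frac{n-1}{R^2}$, which is much larger than $\frac{m}{R^n}$. Hence on the constant function the quadratic form $-\int_\Sigma(|\ff|^2 + \Ric(\nu,\nu)) \, \dmu_g / \int_\Sigma 1 \,\dmu_g$ is approximately $-\frac{n-1}{R^2} + \frac{(n-1)(n-2)m}{R^n} + \dots$, which is \emph{negative} and of order $-R^{-2}$. Therefore $L$ has exactly one negative eigenvalue, roughly $-\frac{n-1}{R^2}$, associated to the (nearly constant) ground state, and the rest of the spectrum is bounded below by $\frac{n(n-1)m}{R^n} - O(\tau R^{-n}+R^{-n-\gamma})$. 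The eigenvalue of least absolute value is thus the smallest positive one, $\mu_1 = \lambda_1 \geq \frac{n(n-1)m}{R^n} - O(\tau R^{-n}+R^{-n-\gamma})$, provided this lower bound has smaller absolute value than $\frac{n-1}{R^2}$, which holds for large $R$ since $R^{-n} \leq R^{-2}$ when... no, $R^{-n} \leq R^{-2}$ requires $n\geq 2$, true, but we need strict comparison of the constants; since $\frac{n(n-1)m}{R^n} = o(R^{-2})$, this is automatic for $R$ large.

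The main obstacle — such as it is — is bookkeeping: one must verify that no eigenvalue of $L$ lies strictly between $-\frac{n-1}{R^2}(1+o(1))$ and $\frac{n(n-1)m}{R^n}(1+o(1))$, i.e. that the min-max characterization genuinely forces a spectral gap around zero of the asserted size. This follows from the orthogonal splitting $L^2(\Sigma) = \mathbb{R} \cdot 1 \oplus \{f : \int_\Sigma f\,\dmu_g = 0\}$ together with the fact that $L$ does not preserve this splitting exactly (the term $(|\ff|^2 + \Ric(\nu,\nu))$ is not constant), so a short perturbation argument is needed: the operator $L_0 f := -\Lapt f - \fint_\Sigma(|\ff|^2+\Ric(\nu,\nu))\,\dmu_g \cdot f$ does preserve the splitting and has the claimed spectrum by Proposition \ref{prop:eigenvalue_lap}, and $L - L_0$ is multiplication by a function of $C^0$-norm $O(R^{-n} + \tau R^{-n} + R^{-n-\gamma})$ coming from the oscillation of $|\ff|^2 + \Ric(\nu,\nu)$, which by Proposition \ref{prop:geometry_sigma}, \eqref{eq:curvest_RicciNN}, and Proposition \ref{prop:curvature_estimate} is $O(\tau R^{-n} + R^{-n-\gamma})$. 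Since this perturbation is small compared to the gap $\frac{n(n-1)m}{R^n}$, standard spectral perturbation theory (or a direct Rayleigh-quotient estimate on the mean-zero subspace, which is exactly Corollary \ref{cor:eigenvalue}) yields that the spectrum of $L$ away from the ground state is bounded below by $\frac{n(n-1)m}{R^n} - O(\tau R^{-n} + R^{-n-\gamma})$, hence $L$ is invertible and $\mu_1$ satisfies the asserted bound.
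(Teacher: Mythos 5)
Your argument is correct, and it reaches the conclusion by a genuinely different route than the paper. The paper works directly with the eigenfunctions: it locates $\mu_0$ by testing with constants and by dropping the Dirichlet term, proves an $L^2$-estimate showing that the ground state $h_0$ is nearly constant, deduces that the second eigenfunction $h_1$ has nearly zero mean, and then runs the quadratic-form estimate of Corollary \ref{cor:eigenvalue} on $h_1-\bar h_1$ while controlling the cross terms produced by $\bar h_1$. You instead freeze the potential: with $V:=|\ff|^2+\Ric(\nu,\nu)$ and $\overline V$ its average, the operator $L_0:=-\Lapt-\overline V$ commutes with the projection onto constants, its spectrum is read off from Proposition \ref{prop:eigenvalue_lap} (the identity $2(n-1)+(n-1)(n-2)=n(n-1)$ is where the constant $n(n-1)$ comes from in both proofs), and $L-L_0$ is multiplication by $V-\overline V$, whose sup-norm is $O(\tau R^{-n}+R^{-n-\gamma})$ by \eqref{eq:curvest_RicciNN} and Proposition \ref{prop:curvature_estimate}; Weyl's perturbation inequality then transfers the spectral picture to $L$. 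This is shorter and avoids the eigenfunction analysis entirely. Indeed, your parenthetical alternative can be pushed further: Courant--Fischer with the constant function as the single constraint vector gives $\lambda_2(L)\geq\inf\{\int_\Sigma fLf\,\dmu_g : \int_\Sigma f^2\dmu_g=1,\ \int_\Sigma f\,\dmu_g=0\}$ directly, so Corollary \ref{cor:eigenvalue} alone bounds every eigenvalue except the lowest from below by $\frac{n(n-1)m}{R^n}-O(\tau R^{-n}+R^{-n-\gamma})$, with no perturbation step at all. Two minor points: the exponent in your bound for $|\tf|^2$ should be $R^{2-2n}$, not $R^{-2n}$ (harmless, since $R^{2-2n}\ll R^{-n}$); and identifying the least-absolute-value eigenvalue as the smallest \emph{positive} one needs an upper bound on that eigenvalue to compare with $|\mu_0|\approx\frac{H^2}{n-1}$ --- your $L_0$-comparison supplies one via $\mu_1(-\Lapt)\leq\frac{n-1}{R^2}(1+o(1))$, whereas the paper asserts $|\mu_0|>|\mu_1|$ without spelling this out; for the application (bounding $\|L^{-1}\|$) only the statement that every eigenvalue has absolute value at least $\frac{n(n-1)m}{R^n}-O(\tau R^{-n}+R^{-n-\gamma})$ is used, and both arguments deliver that.
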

\begin{proof}
  Let $\mu_0$ be the smallest eigenvalue of $L$, that is
  \begin{equation*}
    \mu_0 := \inf \left\{ \int_\Sigma f L f \dmu_g \mid \int_\Sigma f^2
      \dmu_g = 1 \right\}.
  \end{equation*}
  Choosing $f$ to be the constant function $f=\area_g(\Sigma)^{-1/2}$, we
  find that
  \begin{equation}
    \label{eq:mu0_from_above}
    \mu_0 \leq -\frac{H^2}{n-1}  + \frac{(n-1)(n-2) m}{R^n} + O(\tau
R^{-n} + R^{-n-\gamma}),
  \end{equation}
  where we use the estimates from
  Proposition \ref{prop:curvature_estimate} and
  \eqref{eq:curvest_RicciNN}. On the other hand, by dropping
  the Dirichlet energy term and using \eqref{eq:curvest_RicciNN}, we find that
  \begin{equation*}
    \begin{split}
      \int_\Sigma fLf \dmu_g
      &\geq
      - \int_\Sigma (|\ff|^2 + \Ric(\nu,\nu)) f^2 \dmu_g
      \\
      &\geq
      \left( -\frac{H^2}{n-1} +  \frac{(n-1)(n-2) m}{R^n} - O(\tau R^{-n}
+ R^{-n-\gamma}) \right) \int_\Sigma f^2 \dmu_g.
    \end{split}
  \end{equation*}
 In conjunction with \eqref{eq:mu0_from_above}, this gives   \begin{equation*}
    \left| \mu_0 + \frac{H^2}{n-1} -  \frac{(n-1)(n-2) m}{R^n}
    \right|
    \leq O(\tau R^{-n} + R^{-n-\gamma}).
  \end{equation*}
  Let $h_0$ be the corresponding eigenfunction, so that $L h_0 = \mu_0
  h_0$. Denote by $\bar h_0$ its mean value. Multiply the eigenvalue
  equation by $(h_0-\bar h_0)$  and integrate to obtain
  \begin{equation}
    \label{eq:invertible_1}
    \begin{split}
      &\int_\Sigma (h_0-\bar h_0) L (h_0 - \bar h_0) \dmu_g - \mu_0
      \int_\Sigma (h_0 - \bar h_0)^2 \dmu_g
      \\
      &\qquad
      =
      \bar h_0 \int_\Sigma (h_0 - \bar h_0) (|\ff|^2 + \Ric(\nu,\nu))
      \dmu_g.
    \end{split}
  \end{equation}
  By Corollary \ref{cor:eigenvalue}, the first term on the left is
  non-negative, so we drop it. The second term on the left has the
  factor $-\mu_0$ which we can estimate from below $-\mu_0 \geq
  \frac{1}{2(n-1)} H^2$. On the right we use that the function $(h_0 -
  \bar h_0)$ is $L^2$-orthogonal to constant functions in combination
  with the fact that $H^2$ is constant and \eqref{eq:curvest_RicciNN}
  to infer that
  \begin{equation*}
    \int_\Sigma (h_0 - \bar h_0) (|\ff|^2 + \Ric(\nu,\nu)) \dmu_g
    =
     \int_\Sigma (h_0 - \bar h_0) (|\tf|^2 + O(\tau R^{-n} + R^{-n-\gamma}))
\dmu_g.
  \end{equation*}
  In view of \eqref{eq:invertible_1}, we obtain that
  \begin{equation*}
    \begin{split}
      &\frac{H^2}{2(n-1)}  \int_\Sigma (h_0-\bar h_0)^2 \dmu_g
      \\
      &\quad
      \leq
      \frac{H^2}{4(n-1)} \int_\Sigma (h_0-\bar h_0)^2 \dmu_g
      \\
      &\qquad
      + c |\bar h_0|^2 H^{-2} \int_\Sigma  \big(|\tf|^4 + O(\tau^2 R^{-2n}
+ R^{-2n-2\gamma})\big)\dmu_g.
    \end{split}
  \end{equation*}
  This implies the estimate
  \begin{equation}
    \label{eq:eigenfct_l2_estimate}
    \int_\Sigma (h_0-\bar h_0)^2 \dmu_g
    \leq
    O(\tau^2 R^{3-n} + R^{3-n-2\gamma}) |\bar h_0|^2.
  \end{equation}
  Let $\mu_1$ be the next eigenvalue of $L$ with corresponding
  eigenfunction $h_1$. We show that its mean value $\bar h_1$ is
  small. To this end, observe that $h_1$ is $L^2$-orthogonal to $h_0$
  and therefore
  \begin{equation*}
    0
    =
    \int_\Sigma h_0 h_1 \dmu_g
    =
    \int_\Sigma (h_1-\bar h_1) (h_0 - \bar h_0) \dmu_g + \bar h_0
    \int_\Sigma h_1\dmu_g.    
  \end{equation*}
  Hence we get from \eqref{eq:eigenfct_l2_estimate} that
  \begin{equation*}
    \left| \int_\Sigma h_1 \dmu_g \right|
    \leq
    O(\tau R^{\frac{3-n}{2}} + R^{\frac{3-n}{2}-\gamma}) \left( \int_\Sigma |h_1-\bar h_1|^2\dmu_g\right)^{1/2},
  \end{equation*}
  or equivalently
  \begin{equation}
    \label{eq:invertible_h1}
    |\bar h_1| \leq O(\tau R^{\frac{5-3n}{2}} +
    R^{\frac{5-3n}{2}-\gamma}) \left(\int_\Sigma |h_1-\bar h_1|\dmu_g\right)^{1/2}.
  \end{equation}
  Multiply the equation $L h_1 = \mu_1 h_1$ by $(h_1 - \bar h_1)$ and
  integrate. This yields
  \begin{equation*}
    \int_\Sigma (h_1 - \bar h_1 ) L (h_1 - \bar h_1) \dmu_g
    =
    \mu_1 \int_\Sigma (h_1 - \bar h_1)^2
    +
    \bar h_1 \int_\Sigma (h_1 - \bar h_1) (|\ff|^2 + \Ric(\nu,\nu)) \dmu_g.
  \end{equation*}
  The term on the left can be estimated below using
  Corollary \ref{cor:eigenvalue} by $(\frac{n(n-1) m}{R^n} - O(\tau R^{-n}
+ R^{-n-\gamma})) \int_\Sigma (h_1 - \bar h_1)^2 \dmu_g$. Exploiting as before
  that $(h_1 - \bar h_1)$ is $L^2$-orthogonal to constant functions to
  estimate the second term on the right, we arrive at
  \begin{equation*}
    \begin{split}
      & \left(\frac{n(n-1) m}{R^n} - O(\tau R^{-n} + R^{-n-\gamma}) \right)
\int_\Sigma (h_1
      - \bar h_1)^2 \dmu_g
      \\
      &\qquad \leq
      \mu_1 \int_\Sigma (h_1 - \bar h_1)^2 \dmu_g
      +
      O(\tau R^{-n} + R^{-n-\gamma}) |\bar h_1| \int_\Sigma |h_1 - \bar
h_1|\dmu_g
      \\
      &\qquad \leq
      \mu_1 \int_\Sigma (h_1 - \bar h_1)^2 \dmu_g
      +
      O(\tau^2 R^{2-2n} + R^{2-2n-2\gamma})\int_\Sigma (h_1 - \bar h_1)^2 \dmu_g.
    \end{split}
  \end{equation*}
  Here, the second inequality follows from combining
  \eqref{eq:invertible_h1} with Cauchy-Schwarz to estimate the
  $L^1$-norm in terms of the $L^2$-norm.  In conclusion, we
  arrive at the estimate
  \begin{equation*}
    \mu_1 \geq \frac{n(n-1) m}{R^n} - O(\tau R^{-n} + R^{-n-\gamma}).
  \end{equation*}
  Since $\mu_1$ is positive, all other eigenvalues of $L$ are also
  positive. Moreover, since $|\mu_0| > |\mu_1|$, the eigenvalue $\mu_1$
  is the one with the least absolute value.
\end{proof}


\subsection{The uniqueness argument}
Here we adapt an idea from \cite[Section 6]{Metzger:2007} to derive uniqueness of constant mean curvature surfaces of a given mean curvature in the class $\B_{R, \tau,C}$. The advantage of this approach over the method in \cite[Section 4]{Huisken-Yau:1996} is that we need only require the metric to be $\C^2$-asymptotic to Schwarzschild of mass $m>0$, instead of $\C^4$-asymptotic. Also, volume preserving stability is not a hypothesis in our approach, but part of the conclusion, cf. Corollary \ref{cor:eigenvalue}.
 
\begin{proposition}
  \label{prop:deformation}
  Given $C>0$ there exists $\tau_0>0$ such that for every $\tau \in (0,
  \tau_0)$ there exists $R_0>0$ so that for all $R > R_0$ the following
  holds: Let $(u,g) \in B_{R,\tau/2, C}$ be such that $\Sigma = \graph u$ has constant mean curvature
  $H$ with respect to $g$. There exists a differentiable $1$-parameter
  family of functions $u_t \in \C^{2,\alpha}(S_R(0))$ such that $u_0 =
  u$, such that each $\Sigma_t = \graph(u_t)$ for $t\in[0,1]$ has
  constant mean curvature $H$ with respect to the metric $g_t := t g_m
  + (1-t) g$, and such that $(u_t,g_t) \in B_{R, \tau, C}$.
\end{proposition}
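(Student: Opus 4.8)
The plan is to set up a continuity argument in the parameter $t \in [0,1]$ that simultaneously interpolates the metric from $g$ at $t=0$ to $g_m$ at $t=1$ and solves the constant mean curvature equation $H_{g_t}(\graph u_t) = H$ along the way. Concretely, I would consider the map $\Phi \colon (-\epsilon, 1+\epsilon) \times \C^{2,\alpha}(S_R(0)) \to \C^{0,\alpha}(S_R(0))$ given by $\Phi(t, w) := H_{g_t}(\graph w) - H$, where $g_t := t g_m + (1-t)g$, and show that the set of $t$ for which $\Phi(t, \cdot) = 0$ has a solution $u_t$ with $(u_t, g_t) \in \B_{R, \tau, C}$ is open, closed, and nonempty in $[0,1]$. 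Nonemptiness is immediate since $u_0 = u$ works by hypothesis (and $(u_0, g_0) = (u, g) \in \B_{R, \tau/2, C} \subset \B_{R, \tau, C}$). The heart of the openness step is the implicit function theorem: the linearization of $\Phi$ in the $w$-variable at a solution $(t, u_t)$ is, up to the zeroth-order shift, exactly the Jacobi operator $L_t = -\Lapt_{g_t} - (|\ff|^2 + \Ric_{g_t}(\nu,\nu))$ of $\Sigma_t$ in $(M, g_t)$. Proposition \ref{prop:invertible} applies to $(u_t, g_t)$ as long as $(u_t, g_t) \in \B_{R, \tau, C}$ and $\Sigma_t$ has constant mean curvature, and it gives invertibility of $L_t$ with a quantitative lower bound $\mu_1 \geq \frac{n(n-1)m}{R^n} - O(\tau R^{-n} + R^{-n-\gamma})$ on the eigenvalue of least absolute value. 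This bound is uniform in $t$ because the metrics $g_t$ all satisfy the same asymptotic bound \eqref{eq:class_metric_assumption} (the class of admissible metrics is clearly convex, and $g_m$ trivially lies in it). Hence $L_t^{-1}$ is bounded by $c R^n$ uniformly, the implicit function theorem produces a short arc of solutions near $t$, and — crucially — the uniform invertibility bound also controls the $\C^{2,\alpha}$ size of $\dot u_t := -L_t^{-1}(\del_t H_{g_t}(\graph u_t))$, so the solution arc can be continued with the a priori bound.

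The closedness step requires deriving uniform a priori estimates showing that the solutions $u_t$ obtained along the way do not escape the \emph{open} ball of radius $\tau$ in $\B_{R, \tau, C}$, so that the solution set is in fact closed. This is where I would invoke the curvature estimate of Proposition \ref{prop:curvature_estimate}: for any $(u_t, g_t) \in \B_{R, \tau, C}$ with $\Sigma_t$ of constant mean curvature, we get $|\tf| \leq c(\tau R^{1-n} + R^{1-n-\gamma})$, and combining this with standard elliptic Schauder estimates for the quasilinear CMC equation (the mean curvature of $\graph w$ is a second-order quasilinear operator in $w$ whose coefficients are controlled by \eqref{eq:class_metric_assumption}) bootstraps to a bound on the scale-invariant $\C^{2,\alpha}$ norm of $u_t$ of the form $R^{-1}\|u_t\|_{\C^0} + \|Du_t\|_{\C^0} + R\|D^2 u_t\|_{\C^0} + R^{1+\alpha}[D^2 u_t]_\alpha \leq c(\tau_0 \cdot (\text{small}) + R^{-\gamma})$. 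The mechanism is: integrating $|\tf|$ and using $H$ nearly equal to $\frac{n-1}{R}$ (from Proposition \ref{prop:geometry_sigma}) shows $\Sigma_t$ is $\C^{1}$-close to a coordinate sphere with a gain of a small constant times $\tau$, then Schauder estimates upgrade this to $\C^{2,\alpha}$. Choosing $\tau_0$ small enough and $R_0$ large enough (depending only on $n$ and $C$), this bound is strictly less than $\tau$, so the arc of solutions stays in the interior of $\B_{R, \tau, C}$ and we can pass to limits: any $t$ in the closure of the solution set is again a solution with $(u_t, g_t) \in \B_{R, \tau, C}$. The order of quantifiers in the statement — first $\tau_0$, then $\tau \in (0, \tau_0)$, then $R_0$ depending on $\tau$ — matches exactly this scheme: $\tau_0$ is fixed so that Propositions \ref{prop:curvature_estimate} and \ref{prop:invertible} and the Schauder estimates all apply, and then $R_0$ is taken large enough (for the given $\tau$) that the error term $R^{-\gamma}$ in the a priori estimate is dominated, keeping the solution inside the $\tau$-ball.

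I expect the main obstacle to be the a priori $\C^{2,\alpha}$ estimate in the closedness step — specifically, organizing the elliptic estimates so that the constants genuinely depend only on $n$ and $C$ and so that the final bound beats $\tau$ rather than merely being $O(\tau)$ with an uncontrolled constant. The subtlety is that the CMC equation is quasilinear and its linearization is only \emph{uniformly} invertible after one already knows $\Sigma_t \in \B_{R, \tau, C}$, so the estimate has a mildly bootstrapped character; one must be careful that the smallness is genuinely gained (from the curvature estimate and the asymptotic decay $R^{-\gamma}$) and not circularly assumed. A clean way to handle this is to work in rescaled coordinates $y = x/R$ on the unit sphere, where $g_t$ becomes $\delta + O(R^{2-n-\gamma})$-close to Euclidean and the CMC operator becomes a uniformly elliptic operator on $S_1(0)$ with small perturbation; then the estimate is a standard perturbation-of-the-round-sphere Schauder estimate, with the smallness coming from $\tau$ (graph size) and $R^{2-n-\gamma}$ (metric error), and Proposition \ref{prop:curvature_estimate} supplies the improved decay on the traceless second fundamental form that is needed to close the loop.
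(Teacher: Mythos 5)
Your overall architecture (continuity in $t$, openness via the implicit function theorem and the quantitative invertibility of the Jacobi operator from Proposition \ref{prop:invertible}, convexity of the class of admissible metrics) matches the paper. The gap is in your closedness step, i.e.\ in the mechanism you propose for keeping $(u_t,g_t)$ strictly inside $\B_{R,\tau,C}$. You want to get this from the curvature estimate of Proposition \ref{prop:curvature_estimate} plus Schauder theory, concluding a bound on the full scale-invariant $\C^{2,\alpha}$ norm of $u_t$ of the form $c(\tau\cdot(\text{small})+R^{-\gamma})$. But the curvature estimate controls only the \emph{shape} of $\Sigma_t$ (the traceless second fundamental form), not its \emph{position}: a round sphere $S_R(p)$ with $|p|=\tau R$ has $\tf\equiv 0$ and mean curvature exactly $\tfrac{n-1}{R}$, yet the corresponding graph function over $S_R(0)$ has $R^{-1}\|u\|_{\C^0}\approx\tau$, saturating the constraint. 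So "$|\tf|$ small and $H$ close to $\tfrac{n-1}{R}$" only gives closeness to \emph{some} coordinate sphere $S_r(p)$ with no smallness of $|p|$, and no elliptic bootstrap can recover the $\C^0$ component of $u_t$. This translational degree of freedom is exactly the near-kernel of $L$ (which is why $\mu_1\sim mR^{-n}$ rather than $R^{-2}$), and it is the whole difficulty of the step. A symptom of the problem is that your argument never uses the hypothesis that the initial datum lies in $\B_{R,\tau/2,C}$ rather than $\B_{R,\tau,C}$.

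The paper's mechanism is different and does control the position: differentiate $\CH(u_s,g_s)=H$ in $s$ to get $L_{(u_s,s)}\bigl(\tfrac{\del u_s}{\del s}\la\del_r,\nu_s\ra\bigr)=-D_2\CH(u_s,g_s)$, compute $D_2\CH(u_s,g_s)=O(R^{1-n-\gamma})$ (since $g_s-g_m=(1-s)(g-g_m)$ decays at that rate), and combine with $\|L^{-1}\|\leq \tfrac{2R^nm}{n(n-1)}$ to get $\tfrac{\del u_s}{\del s}=O^{2,\alpha}(R^{1-\gamma})$. Integrating over $s\in[0,t]$ bounds the total drift of $u_t$ from $u$ in the scale-invariant norm by $C'R^{-\gamma}$, which is less than $\tau/4$ for $R$ large; since $u\in\B_{R,\tau/2,C}$, the path stays in $\B_{R,3\tau/4,C}$, strictly inside the $\tau$-ball. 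You in fact write down the correct ODE $\dot u_t=-L_t^{-1}(\del_tH_{g_t}(\graph u_t))$ in your openness paragraph; what is missing is the estimate $\del_tH_{g_t}=O(R^{1-n-\gamma})$ and the integration of the resulting bound over $t$, which is the step that actually closes the argument. Your Schauder/curvature-estimate route should be discarded for this purpose.
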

\begin{proof}
  Clearly, $g_t$ satisfies \eqref{eq:class_metric_assumption} with the
  same constant $C$ as $g$. We choose $\tau_0>0$ and $R_0 >0$ as in
  Propositions \ref{prop:curvature_estimate} and
  \ref{prop:invertible}. In particular, the Jacobi operator $L$ on
  $\graph(u)$ with respect to $g$ is invertible for all $(u,g) \in
  \B_{R, \tau ,C}$, provided that $R > R_0$ and $\tau \in (0, \tau_0)$.

  Consider the set $I \subset (0,1]$ of all $t\in (0,1]$ such that there exists a differentiable curve
  \begin{equation*}
    [0,t) \to \C^{2,\alpha}(S_R(0)) : s\mapsto u_s 
  \end{equation*}
  with $0 \mapsto u$, and such that $(u_s, g_s)\in \B_{R, \tau, C}$ and $\graph u_s$ has
  constant mean curvature $H$ with respect to $g_s$ for all $s \in [0, t)$.

  Consider the operator
  \begin{equation*}
    \CH : \C^{2,\alpha}(S_R(0)) \times [0,t] \to \C^{0,\alpha}(S_R(0))
  \end{equation*}
  which maps a pair $(v,s)$ to the mean curvature of $\graph v$ with
  respect to the metric $g_s$. The derivative of $\CH$ with respect to
  the first variable is given by the Jacobi operator $L_{(v, s)}$ of
  $\graph v$ with respect to the metric $g_s$. By assumption, $\graph(u)$ has constant mean curvature $H$ with respect to $g$. By
  Proposition \ref{prop:invertible}, the operator $L_{(u,0)}$ is
  invertible. It follows that $I$ contains a neighborhood of $0$.

  Let $t\in I$ and $s\mapsto u_s$ be the corresponding curve for $s\in
  [0,t)$. Standard compactness theory for solutions of the parametric constant mean curvature equation shows that there exists
  a limit $u_t\in\B_{R,\tau,C}$ of $u_s$ as $s \nearrow t$ such that $\graph(u_t)$
  has constant mean curvature $H$ with respect to $g_t$. Applying Proposition \ref{prop:invertible} and the implicit function theorem as in the preceding paragraph, we see that the curve $s\to
  u_s$ can be continued differentiably beyond $t$. It remains to show that this extended part remains in $\B_{R,\tau,C}$ so long as $t \leq 1$.

  We show that our assumptions
  imply that in fact $(u_t,g_t)\in \B_{R, 3\tau/4, C}$, so that by
  continuity the extension above remains in $\B_{R, \tau,
    C}$. Differentiating the equation $\CH(u_s,g_s) = H$ with respect to $s$ for
  $s\in[0,t]$ we get
  \begin{equation*}
    L_{(u_s,s)} \left(\frac{\del u_s}{\del s} \la \del_r, \nu_s \ra\right)
    =
    - D_2 \CH(u_s, g_s),
  \end{equation*}
  where $D_2 \CH (u_s,g_s)$ denotes the variation of $\CH$ with
  respect to $s$ and $\nu_s$ denotes the normal to $\graph(u_s)$ with
  respect to $g_s$. We calculate that $D_2 \CH(u_s, g_s) =
  O(R^{1-n-\gamma})$. From Proposition \ref{prop:invertible} we know that
   the
  norm of the inverse of $L_{(u_s,s)}$ is bounded by
  $\frac{2R^nm}{n(n-1)}$. It follows that 
  $\frac{\del u_s}{\del s} = O^{2, \alpha}(R^{1-\gamma})$ and hence that
  $$\sup_{S_R(0)} R^{-1}| u_t - u| + |D(u_t - u)| + R |D^2 (u_t-u)| + R^{1+\alpha} [D^2 (u_t - u)]_\alpha \leq C' R^{-\gamma}.$$
  Choosing $R_0$ even larger, if necessary, we can ensure that the right hand side is less than $\tau/4$, so that indeed $u_t \in \B_{R, \frac{3 \tau}{4}, C}$. 
\end{proof}

The proof of the following is now the same as that of \cite[Theorem 6.5]{Metzger:2007}:

\begin{theorem} \label{thm:uniqueness}
Given $C>0$, there exists $\tau \in (0, \frac{1}{2})$, $R_0 >0$, and $C'>0$ so that for all $R \geq R_0$ there exists exactly one $(u, g) \in \B_{R, \tau, C}$ such that $\Sigma = \graph(u)$ has the same constant mean curvature with respect to $g$ as $S_{R}(0)$ with respect to $g_m$. Moreover, we have that $(u, g) \in \B_{R, C' R^{-\gamma}, C}$, i.e. 
\begin{equation*}
\sup_{S_R(0)} R^{-1} |u| + |D u| + R | D^2 u |  + R^{1 + \alpha} [D^2 u]_\alpha \leq C' R^{- \gamma}.
\end{equation*}  
 \begin{proof}
  We choose the constants $\tau_0>0$ and $R_0>0$ as in
  Proposition \ref{prop:deformation}.  Assume that there are two such
  surfaces $\Sigma^i = \graph(u^i)$ for
  $i=1,2$. Proposition \ref{prop:deformation} implies that we can
  deform both surfaces $\Sigma^i$ along differentiable paths $u_t^i$ to surfaces of constant mean curvature $H$ in the
  Schwarzschild metric $g_m$. By the Alexandrov theorem in
  Schwarzschild proven by S. Brendle in \cite{Brendle:2011} and the
  fact that $R$ is large, we find that $u^1_1 = u^2_1 = 0$. Since the
  implicit function theorem gives local uniqueness, this implies
  that $u^1_t = u^2_t$ for all $t\in [0,1]$, in particular for $t=0$. That $(u, g) \in \B_{R, C'R^{- \gamma}, C}$ now follows from the estimates at the end of the proof of Proposition \ref{prop:deformation}. Reading the argument backwards gives the existence of $u$. 
\end{proof}
\end{theorem}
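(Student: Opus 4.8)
The plan is to run a continuity/connectedness argument in the deformation parameter $t \in [0,1]$, exactly in the spirit of \cite[Theorem 6.5]{Metzger:2007}, using the key inputs that are already in place: the deformation Proposition \ref{prop:deformation}, the invertibility of the Jacobi operator from Proposition \ref{prop:invertible}, and Brendle's Alexandrov theorem in Schwarzschild \cite{Brendle:2011}. First I would prove \emph{uniqueness}. Suppose $(u^1, g), (u^2, g) \in \B_{R, \tau, C}$ both describe hypersurfaces $\Sigma^i = \graph(u^i)$ of the same constant mean curvature $H$ (namely the mean curvature of $S_R(0)$ with respect to $g_m$) with respect to the given metric $g$. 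Apply Proposition \ref{prop:deformation} to each: this produces differentiable one-parameter families $u^i_t$, $t \in [0,1]$, with $u^i_0 = u^i$, with $\graph(u^i_t)$ of constant mean curvature $H$ with respect to $g_t = t g_m + (1-t) g$, and with $(u^i_t, g_t) \in \B_{R, \tau, C}$ throughout. At $t = 1$ the ambient metric is exactly the Schwarzschild metric $g_m$, and $\graph(u^i_1)$ is a closed constant mean curvature hypersurface in $(\R^n \setminus \{0\}, g_m)$ that lies in the class $\B_{R, \tau, C}$; by Brendle's theorem such a hypersurface must be a centered coordinate sphere, and since it is also a normal graph over $S_R(0)$ with small $u$ it must in fact be $S_R(0)$ itself, i.e. $u^1_1 = u^2_1 = 0$.

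Next I would run the uniqueness \emph{backwards} in $t$ using the implicit function theorem. Consider the operator $\CH(v, s)$ sending $(v, s) \in \C^{2,\alpha}(S_R(0)) \times [0,1]$ to the mean curvature of $\graph(v)$ with respect to $g_s$; its partial derivative in $v$ at any admissible $(v, s)$ is the Jacobi operator $L_{(v,s)}$, which is invertible with uniformly bounded inverse by Proposition \ref{prop:invertible}. Therefore the set of solutions of $\CH(v, s) = H$ with $(v, g_s) \in \B_{R, \tau, C}$ is, locally in $s$, a single differentiable curve. Since $u^1_1 = u^2_1$, local uniqueness propagates the coincidence to an open-and-closed subset of $[0,1]$ containing $1$; connectedness gives $u^1_t = u^2_t$ for all $t \in [0,1]$, in particular $u^1 = u^1_0 = u^2_0 = u^2$. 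This establishes that there is at most one such $(u, g)$ for each $R$ (with $g$ fixed to be the given metric), and the quantitative bound $(u, g) \in \B_{R, C'R^{-\gamma}, C}$ is read off from the integrated estimate $\sup_{S_R(0)} R^{-1}|u_t - u| + |D(u_t - u)| + R|D^2(u_t - u)| + R^{1+\alpha}[D^2(u_t - u)]_\alpha \leq C' R^{-\gamma}$ derived at the end of the proof of Proposition \ref{prop:deformation}, applied between $t = 1$ (where $u_1 = 0$) and $t = 0$.

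For \emph{existence}, I would reverse the roles of $g_m$ and $g$: start from the centered coordinate sphere $S_R(0)$, which has constant mean curvature $H$ with respect to $g_m$ and corresponds to $(0, g_m) \in \B_{R, \tau/2, C}$, and apply Proposition \ref{prop:deformation} along the path $g_t = t g_m + (1-t) g$ but now continuing from $t = 1$ down to $t = 0$ (the proposition is symmetric in this sense, since $g_t$ satisfies \eqref{eq:class_metric_assumption} with the same constant for all $t$, and the argument only uses invertibility of $L$, which holds uniformly). This yields a solution $u = u_0$ with $(u, g) \in \B_{R, \tau, C}$, and in fact $(u, g) \in \B_{R, C'R^{-\gamma}, C}$ by the same integrated estimate. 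Together with the uniqueness already proven, this completes the proof.

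The main obstacle is not in the structure of the argument — which is a clean open-closed-connected deformation — but in making sure the deformation stays inside the class $\B_{R, \tau, C}$ for \emph{all} $t \in [0,1]$ simultaneously, so that the invertibility bound of Proposition \ref{prop:invertible} and the curvature estimate of Proposition \ref{prop:curvature_estimate} apply uniformly along the whole path; this is precisely the content of Proposition \ref{prop:deformation} and its a priori improvement to $\B_{R, 3\tau/4, C}$, so with that proposition in hand the remaining work is routine. A secondary point requiring care is the passage to the limit at the endpoint of a maximal subinterval of $I$, where one invokes standard compactness for the parametric constant mean curvature equation to produce a limiting graph still in the (closed) class $\B_{R, \tau, C}$; this, too, is handled inside Proposition \ref{prop:deformation}.
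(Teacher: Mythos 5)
Your proposal is correct and follows essentially the same route as the paper's own proof: deform both candidate surfaces to the exact Schwarzschild metric via Proposition \ref{prop:deformation}, identify the endpoint as $S_R(0)$ using Brendle's Alexandrov theorem, propagate uniqueness back along $t$ by the implicit function theorem (via the invertibility of the Jacobi operator from Proposition \ref{prop:invertible}), and obtain existence and the quantitative bound $(u,g)\in\B_{R,C'R^{-\gamma},C}$ by running the deformation backwards from $(0,g_m)$ and using the integrated estimate from Proposition \ref{prop:deformation}. Your write-up merely makes explicit the open-closed-connected argument that the paper compresses into ``the implicit function theorem gives local uniqueness.''
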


\subsection{Existence of an isoperimetric foliation} 
Theorem \ref{thm:uniqueness} and the invertibility of the Jacobi operator proven in Proposition \ref{prop:invertible} show that the asymptotic regime of an initial data set $(M, g)$ that is $\C^2$-asymptotic to Schwarzschild of mass $m>0$ is foliated by strictly volume preserving stable constant mean curvature surfaces. The existence of such a foliation was proven for dimension $n=3$ in \cite{Metzger:2007} and, earlier, in \cite{Huisken-Yau:1996, Ye:1996} under stronger asymptotic conditions. Theorem \ref{thm:centering} shows that in such an initial data set, large isoperimetric regions exist for every sufficiently large volume, and that their boundaries are constant mean curvature surfaces to which the uniqueness assertion in Theorem \ref{thm:uniqueness} applies. In summary, we obtain the following result:

\begin{theorem} \label{thm:existence}
  Let $(M, g)$ be an initial data set that is $\C^2$-asymptotic to Schwarzschild of mass $m>0$ at rate $\gamma \in (0, 1]$. There
  exist $V_0, C' >0$ with the following properties: For every $V \geq V_0$ there exists a unique isoperimetric region $\Omega_V \subset M$ of volume $V$. The boundary of $\Omega_V$ is a connected smooth embedded closed strictly volume preserving stable constant mean curvature surface $\Sigma_V$ of mean curvature $H_V$. We have that $U \subset \Omega_V$ and hence that $\Sigma_V \subset M \setminus U \cong_x \R^n \setminus B_{\frac{1}{2}} (0)$. Let $R>0$ large be chosen such that $S_{R}(0)$ has constant mean curvature $H_V$ with respect to $g_m$. There exists $u_V \in \C^{2, \alpha} (S_R(0))$ with 
  \begin{equation*}
\sup_{S_R(0)} R^{-1} |u| + |D u| + R | D^2 u |  + R^{1 + \alpha} [D^2 u]_\alpha \leq C' R^{- \gamma}
\end{equation*}  
  and such that $\Sigma_V = \graph(u_V)$. The isoperimetric surfaces $\{\Sigma_V\}_{V
    \geq V_0}$ form a smooth foliation of the region $M \setminus \Omega_{V_0}$. Here, $U \subset M$ and $C>0$ are as in Definition \ref{def:initial_data_sets}. 
\end{theorem}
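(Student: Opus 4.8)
The plan is to assemble Theorem~\ref{thm:existence} by combining the three main pillars that have already been established: the a~priori position and regularity estimates of Theorem~\ref{thm:centering}, the invertibility of the Jacobi operator in Proposition~\ref{prop:invertible}, and the global uniqueness statement in Theorem~\ref{thm:uniqueness}. First I would fix the constant $C>0$ associated to the metric $g$ via Definition~\ref{def:initial_data_sets}, and let $\tau_0$, $R_0$, $C'$ be the constants produced by Theorem~\ref{thm:uniqueness} for this $C$. By Theorem~\ref{thm:minimizers_exist} there exists a smooth isoperimetric region $\Omega_V$ of volume $V$ for every $V\geq V_0'$, and by Theorem~\ref{thm:centering} its boundary $\Sigma_V=\partial\Omega_V$ is a connected smooth closed hypersurface whose scale-invariant $\C^{2,\alpha}$ distance to the corresponding coordinate sphere $S_{r(V)}$ tends to zero as $V\to\infty$. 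Since $\Sigma_V$ is isoperimetric, it has constant mean curvature $H_V$; moreover the coordinate-sphere comparison shows $H_V\to 0$ and in fact $H_V$ is comparable to $r(V)^{-1}$. Enlarging $V_0$ if necessary, $\Sigma_V$ lies in the asymptotic region $M\setminus U\cong_x\R^n\setminus B_{1/2}(0)$ and in fact $U\subset\Omega_V$, the latter because Theorem~\ref{thm:centering}'s argument forces $\CL^n_g(B_{r/2}\setminus\Omega_V)=0$.

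Next I would convert $\Sigma_V$ into graphical data over a coordinate sphere of the \emph{model} metric $g_m$. Choose $R=R(V)$ so that $S_R(0)$ has constant mean curvature $H_V$ with respect to $g_m$; this $R$ exists and is unique for $H_V$ small, and $R(V)\to\infty$ as $V\to\infty$, with $R(V)$ comparable to $r(V)$. Because the scale-invariant $\C^{2,\alpha}$ norm of the graph function describing $\Sigma_V$ over $S_{r(V)}$ goes to zero, and because $r(V)$ and $R(V)$ differ by a lower-order amount, $\Sigma_V$ can be written as $\graph(u_V)$ for some $u_V\in\C^{2,\alpha}(S_R(0))$ with $\sup_{S_R(0)}R^{-1}|u_V|+|Du_V|+R|D^2u_V|+R^{1+\alpha}[D^2u_V]_\alpha$ as small as we like once $V_0$ is large; in particular we may assume this quantity is less than the $\tau$ from Theorem~\ref{thm:uniqueness}. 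Writing $g$ in these coordinates, the asymptotic decay in Definition~\ref{def:initial_data_sets} gives exactly the bound \eqref{eq:class_metric_assumption} (with the same $C$), so $(u_V,g)\in\B_{R,\tau,C}$ and $\graph(u_V)$ has constant mean curvature $H_V$ with respect to $g$, the same mean curvature as $S_R(0)$ has with respect to $g_m$.

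Now Theorem~\ref{thm:uniqueness} applies verbatim: for each such $R$ there is \emph{exactly one} $(u,g)\in\B_{R,\tau,C}$ whose graph has the same constant mean curvature relative to $g$ as $S_R(0)$ has relative to $g_m$, and that unique $u$ satisfies the improved bound $(u,g)\in\B_{R,C'R^{-\gamma},C}$. Hence $u_V$ is this unique function, which gives both the uniqueness of $\Omega_V$ among isoperimetric regions of volume $V$ (two such regions would yield two such graphs for the same $R$, forcing them to coincide) and the quantitative estimate $\sup_{S_R}R^{-1}|u_V|+|Du_V|+R|D^2u_V|+R^{1+\alpha}[D^2u_V]_\alpha\leq C'R^{-\gamma}$ claimed in the statement. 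Volume-preserving stability, with the strict inequality for the first eigenvalue of the Jacobi operator on mean-zero functions, is exactly Corollary~\ref{cor:eigenvalue} and Proposition~\ref{prop:invertible} applied to $(u_V,g)$. Connectedness and embeddedness of $\Sigma_V$ come from Theorem~\ref{thm:centering}.

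Finally, for the foliation statement I would run an implicit-function-theorem argument in the parameter $H$ (equivalently $V$). Fix some $V_1\geq V_0$ and the associated surface $\Sigma_{V_1}=\graph(u_{V_1})$ with mean curvature $H_{V_1}$. Since the Jacobi operator $L_{(u_{V_1},g)}$ is invertible by Proposition~\ref{prop:invertible}, the map $(v,H)\mapsto(\text{mean curvature of }\graph v\text{ w.r.t. }g)-H$ from $\C^{2,\alpha}(S_R(0))\times\R$ to $\C^{0,\alpha}(S_R(0))$ has invertible partial derivative in $v$ at $(u_{V_1},H_{V_1})$, so there is a differentiable family $H\mapsto u(H)$ of constant-mean-curvature graphs for $H$ near $H_{V_1}$; by the global uniqueness just proven these are precisely the isoperimetric boundaries $\Sigma_{V(H)}$. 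The normal speed of this family is $\dot u\,\langle\partial_r,\nu\rangle = L^{-1}(\text{const})$; because $L$ has no kernel and its lowest eigenvalue is negative (estimate \eqref{eq:mu0_from_above} gives $\mu_0<0$) while all others are positive, one checks that this normal speed is nowhere zero — indeed it has a definite sign since the relevant solution of $L\varphi=\text{const}$ is close to a constant multiple of the first eigenfunction, which doesn't change sign. Therefore the surfaces $\{\Sigma_V\}$ sweep out a neighborhood of $\Sigma_{V_1}$ foliated smoothly, and since $V_1\geq V_0$ was arbitrary and the surfaces exhaust $M\setminus\Omega_{V_0}$ by the position estimates ($r(V)\to\infty$), they foliate all of $M\setminus\Omega_{V_0}$. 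The main obstacle here is the non-degeneracy of the normal speed: one has to extract from the two-sided eigenvalue information in Proposition~\ref{prop:invertible} and from estimate~\eqref{eq:eigenfct_l2_estimate} that $L^{-1}$ applied to a constant is a function of one sign, which is what guarantees that the leaves are genuinely disjoint rather than merely tangent; the rest is bookkeeping with the already-established estimates.
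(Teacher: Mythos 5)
Your proposal is correct and follows essentially the same route as the paper, which obtains Theorem \ref{thm:existence} by assembling Theorems \ref{thm:centering}, \ref{thm:minimizers_exist} and \ref{thm:uniqueness} together with Proposition \ref{prop:invertible} and Corollary \ref{cor:eigenvalue}; indeed you supply more detail (the implicit-function-theorem construction of the foliation and the sign of the normal speed $L^{-1}(\mathrm{const})$) than the paper, whose proof is the short summary paragraph preceding the statement. The one step worth tightening is the uniqueness of $\Omega_V$ for a \emph{fixed volume}: two isoperimetric regions of volume $V$ need not a priori have the same mean curvature, so rather than appealing to Theorem \ref{thm:uniqueness} ``for the same $R$'' one should conclude via the strict monotonicity of enclosed volume along the foliation you construct.
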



\section{The center of mass}
\label{sec:center-mass}
In this section we let $(M,g)$ be $\C^2$-asymptotic to Schwarzschild of mass $m>0$, and we assume that $(M,g)$ is asymptotically even. Under these conditions\footnote{The condition that $\Scal(x) - \Scal(-x) = O(|x|^{-n-1-\gamma})$ ensures that this limit exists. Cf. with the proof of Lemma \ref{lemma:alternative_center}.} the limits \begin{equation*}
  \mathcal{C}_l = \frac{1}{2m (n-1) \omega_{n-1}} \lim_{r\to\infty}
  \frac{1}{r} \int_{S_r} \sum_{i,j=1}^n x_l \big(g_{ij,i} -
  g_{ii,j} \big)x_j - \sum_{i=1}^n (g_{il}x_i - g_{ii}x_l) \dmu_{\delta}
\end{equation*}
exist for every $l\in\{1,\dots,n\}$. $\mathcal{C} =
(\mathcal{C}_1, \ldots, \mathcal{C}_n)$ is called the center of mass of $(M, g)$. See \cite{Regge-Teitelboim:1974, Beig-OMurchadha:1987} for the origin of the concept of the center of mass for three dimensional initial data sets, and to the recent papers \cite{Corvino-Wu:2008, Huang:2009, Huang:2010} for the relationship of this definition with other, partially equivalent geometric notions.

The main result of this section is that under the given assumptions
the centers of mass of the surfaces $\Sigma_H$ with $H\in(0,H_0)$
converge to $\mathcal{C}$ as $H\searrow 0$. The convergence of the
centers of mass of the surfaces has been established in dimension
$n=3$ for metrics that are $\C^4$-asymptotic to Schwarzschild with
mass $m>0$ by G. Huisken and S.-T. Yau \cite[Theorem
4.2]{Huisken-Yau:1996}. L.-H. Huang showed in dimension $n=3$ that
this limit equals the center of mass of $(M,g)$ if the latter exists
\cite[Theorem 2]{Huang:2009}. In the following we generalize this
result to arbitrary dimension. Even when $n=3$, our proof here is different from the one in \cite{Huang:2009}, cf. Remark \ref{rem:Huangproof}. 
\begin{theorem}
  \label{thm:center_of_mass}
  Let $(M,g)$ be an initial data set that is $\C^2$-asymptotic to
  Schwarzschild of mass $m>0$, and assume that $(M, g)$ is
  asymptotically even. Let $\{\Sigma_V\}_{V \geq V_0}$ be the isoperimetric surfaces of 
  Theorem \ref{thm:existence}. For $l\in\{1,\dots,n\}$, define
  \begin{equation*}
    a(V)_l := \area_\delta(\Sigma_V)^{-1} \int_{\Sigma_V} x_l \dmu_\delta.
  \end{equation*}
  Then $a(V)_l \to \mathcal{C}_l$ as $V \to \infty$, where
  $\mathcal{C} = (\mathcal{C}_1, \ldots, \mathcal{C}_n)$ denotes the center of mass of $(M,g)$.
\end{theorem}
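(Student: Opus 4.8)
The plan is to compare the geometric quantity $a(V)_l$ defined on the isoperimetric surface $\Sigma_V$ with the flux-type integral appearing in the definition of $\mathcal{C}_l$, and to show that the difference tends to zero as $V\to\infty$. The key point is that $\Sigma_V$ is, by Theorem \ref{thm:existence}, a very precise normal graph $\graph(u_V)$ over a large centered coordinate sphere $S_R(0)$ with $\sup R^{-1}|u_V| + |Du_V| + R|D^2u_V| + R^{1+\alpha}[D^2u_V]_\alpha \leq C' R^{-\gamma}$, and that it has constant mean curvature $H_V$ with respect to $g$. First I would fix notation: write $R = R(V)$ for the radius with $\CL^n_g(B_R) = V$ (so $R\to\infty$ as $V\to\infty$), and note $\area_\delta(\Sigma_V) = \omega_{n-1}R^{n-1}(1 + O(R^{-\gamma}))$ and that the Euclidean position integral $\int_{\Sigma_V} x_l \dmu_\delta$ picks up its leading contribution from the displacement of $\Sigma_V$ off-center, which is encoded both by $u_V$ and by the non-Euclidean part of $g$.

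The heart of the argument is a Pohozaev–type / divergence identity that relates $\int_{\Sigma_V} x_l \dmu_\delta$ to the flux integrand $\sum_{i,j} x_l(g_{ij,i} - g_{ii,j})x_j - \sum_i(g_{il}x_i - g_{ii}x_l)$ over $\Sigma_V$. The strategy, following the ideas of L.-H. Huang but carried out more elementarily here, is: (1) express the constant-mean-curvature condition $H_g(\Sigma_V) = H_V$ in the form of a near-Euclidean equation, subtract off the Schwarzschild mean curvature of the corresponding coordinate sphere, and integrate against the conformal Killing-type vector field $x_l \del_r$ (or the coordinate translation field $\del_l$) on $\Sigma_V$; (2) use the first variation formula and the constancy of $H_V$ so that the pure-trace terms drop out or become harmless, leaving an identity in which the left side is (a multiple of) $2m(n-1)\omega_{n-1}\, a(V)_l$ plus errors and the right side is $R^{-1}\int_{\Sigma_V}(\text{flux integrand})\dmu_\delta$ plus errors; (3) estimate all error terms using Proposition \ref{prop:geometry_sigma}, Proposition \ref{prop:curvature_estimate} (which gives $|\tf| \leq c R^{1-n}$, crucially \emph{without} needing $\C^4$ control), and the graph bounds $|u_V| \leq C' R^{1-\gamma}$, so that each error is $o(1)$; (4) finally replace the integral of the flux integrand over $\Sigma_V$ by the same integral over the coordinate sphere $S_R(0)$, which is legitimate because the integrand decays like $|x|^{1-n-\gamma}$ in the gradient terms times $|x|$, i.e. the discrepancy between integrating over $\Sigma_V$ and over $S_R$ is controlled by $|u_V| R^{-1}$ times the $S_R$-integral and hence is also $o(1)$; then let $R\to\infty$ and invoke the definition of $\mathcal{C}_l$ together with the asymptotic evenness assumption, which (as in the footnote) guarantees the limit exists and is independent of how the centered spheres exhaust the end.

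I would organize this as an auxiliary lemma — call it Lemma \ref{lemma:alternative_center} — giving an alternative integral expression for $\mathcal{C}_l$ that is manifestly close to $a(V)_l$, proved by the divergence-theorem manipulation above and the decay of $g - g_m$; the theorem then follows by matching. The main obstacle I anticipate is bookkeeping the error terms in step (3) with the right powers of $R$: one has several competing small quantities ($\tau \sim R^{-\gamma}$ from the graph class, $|\tf| \sim R^{1-n}$, the metric error $R^{2-n-\gamma}$, and the $1/R$ normalization), and one must check that after multiplying by the $R$-weights coming from $x_l \sim R$ and from $\area_\delta(\Sigma_V)^{-1} \sim R^{1-n}$, every error genuinely vanishes in the limit; the term that is most delicate is the one involving $\Ric(\nu,\nu)$, where one must use the precise expansion \eqref{eq:curvest_RicciNN}, $\Ric(\nu,\nu) = -(n-1)(n-2)mR^{-n} + O(\tau R^{-n} + R^{-n-\gamma})$, together with the fact that the $O(\tau R^{-n})$ piece integrates against $x_l$ over a nearly centered sphere to something $o(1)$ only after using that $\Sigma_V$ is close to \emph{centered} (which is where Theorem \ref{thm:centering} and the effective volume comparison enter). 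A secondary subtlety is justifying that the constancy of $H_V$ lets one discard the $\nabt^2 H$–type contributions cleanly; here constancy makes those terms literally zero, which is the simplification over the Huisken–Yau approach.
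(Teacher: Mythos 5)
Your skeleton is the paper's: exploit the constancy of $H_V$ to get the identity $\int_{\Sigma_V}(H^{\Sigma_V}-H^{\Sigma_V}_\delta)\,\delta(b,\nu^{\Sigma_V}_\delta)\,d\mu_\delta=0$, transfer it to an approximating round sphere, and compare against an alternative flux expression for $\mathcal{C}_l$ proved by a divergence-theorem manipulation (this is exactly Lemma \ref{lemma:alternative_center}). But the step where you wave at the error analysis is precisely where the proof lives, and as written it fails. Replacing the flux integral over $\Sigma_V$ (or over the best-fit sphere $S_{r_H}(p_H)$) by the one over a centered sphere is \emph{not} controlled by ``$|u_V|R^{-1}$ times the $S_R$-integral'': the correct bound comes from integrating $\div_\delta Y_{(l)}=x_l(\Scal_g+\partial g * \partial g)=O(r^{1-n-\gamma})$ over the region swept out by the displacement, whose volume is of order $|p_H|\,R^{n-1}\sim R^{n-\gamma}$, giving an error $O(R^{1-2\gamma})$ — which does \emph{not} tend to zero when $\gamma\le\tfrac12$. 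One needs the asymptotic evenness to kill the odd part of $\div_\delta Y_{(l)}$ and then a bootstrap: the first pass yields only $|p_H|\le cR^{1-2\gamma}$, which is fed back into Lemma \ref{lemma:alternative_center} with $\gamma_1=2\gamma$, and so on finitely many times. Your blanket assertion that ``every error genuinely vanishes in the limit'' skips this entirely. Relatedly, the quantity that must be pinned down is the center $p_H$ of a best-fit (generically \emph{off-center}) sphere with $|a(V)-p_H|$ small — not the graph function over $S_R(0)$ — and the link between $p_H$ and $\mathcal{C}_l$ comes from the ADM-mass flux $\int_{S_{r_H}(p_H)}p_l(h_{ij,i}-h_{ii,j})\rho_j\,d\mu_\delta\approx 2m(n-1)\omega_{n-1}\,p_l$; your claim that the left side ``is a multiple of $2m(n-1)\omega_{n-1}\,a(V)_l$'' presupposes this mechanism without supplying it.

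The second genuine gap is dimension $n=3$. The pointwise sphere-approximation of Lemma \ref{lem:sphere_approximation} contributes errors of order $R^{3-n}$ (e.g.\ in $|a(H)-p_H|$ and in the pulled-back mean-curvature comparison), which is $O(1)$ when $n=3$ and destroys the conclusion. The paper must replace the $C^2$ graph approximation by the De Lellis--M\"uller $L^2$ estimates (via $\int_{\Sigma_H}|\tf_\delta|^2_\delta\,d\H^2_\delta\le cR^{-2-2\gamma}$ and conformal invariance) to recover decaying errors in this dimension. Your proposal does not notice this, and your identification of the $\Ric(\nu,\nu)$ term as the delicate one is a misdiagnosis: that term belongs to the eigenvalue/uniqueness analysis of Section \ref{sec:uniqueness}, not to the center-of-mass computation, which runs entirely through mean curvatures and the flux integrand.
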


\begin{proof}
In this proof, $c$ is a constant depending only on $(M, g)$ that may vary from line to line. We may parametrize the surfaces $\{ \Sigma_V \}_{V\geq V_0}$ by their constant mean curvatures $H = H(V)$, or, equivalently, by $R = R(V)$, where $R$ is such that $S_R(0)$ has constant mean curvature $H$ with respect to $g_m$. From Theorem \ref{thm:existence} and Proposition \ref{prop:curvature_estimate} (with $\tau = C' R^{-\gamma}$) we know that $\lim_{V \to \infty} H \left(\frac{n V}{\omega_{n-1}}\right)^\frac{1}{n} = n-1 = \lim_{V \to \infty} H R$, and that $|\tf|_g \leq c R^{1-n-\gamma}$, where $\tf_g$ is the traceless part of the second fundamental form of $\Sigma_R$. It follows that $\sup_\Sigma |\tf_\delta|_\delta + \sup_\Sigma |H_\delta - H| \leq c R^{1-n}$. By Lemma \ref{lem:sphere_approximation}, there exists a coordinate sphere $S_H := S_{r_H}(p_H)$ and $v_H \in \C^2( S_{H})$ such that
  $\Sigma_H = \{ F_H (y): y \in S_H\}$, where $F_H : S_H \to \R^n$ is
  given by $F_H(y) := y + v_H (y) \frac{y - p_H}{r_H}$, and
  such that\footnote{If $(M, g)$ is $\C^3$-asymptotic to Schwarzschild of mass $m>0$, we can derive the estimate $|\nabla_g \tf|_g \leq c R^{-n-\gamma}$ using a maximum principle argument (the Bernstein trick, differentiating J. Simons' identity), and argue as in the proof of \cite[Proposition 2.1]{Huisken-Yau:1996} to improve the right hand side of this estimate to $R^{2-n-\gamma}$. This would improve the subsequent estimates and we could treat the cases $n=3$ and $n \geq 4$ in one step.}
  \begin{equation}
    \label{eq:cm_sphere_approx}
    \sup_{S_H} r_H^{-1} |v_H| + |D v_H| + r_H |D^2 v_H| \leq c R^{2-n}.  \end{equation}
  It follows that $|\frac{r_H}{R} - 1| \leq c R^{2-n}$, that $|p_H|/r_H \leq c R^{-\gamma}$, and that 
  \begin{equation*}\
    |a(H) - p_H|
    \leq
    c R^{3-n}.
  \end{equation*}
  By Lemma \ref{lemma:alternative_center},
  \begin{multline}
    \label{eq:lans_center_of_mass}
    \left|  m (n-1) \omega_{n-1} (p_H-\mathcal{C})_l
      -  \int_{S_H}  (x - p_H)_l (H^{S_H} - \tfrac{n-1}{r_H}) \dmu_\delta
      \right|
      \\
      \leq
      c ( R^{1-2\gamma} + R^{-\gamma}).
  \end{multline}
  We now analyze the integral on the left hand side to show that it
  tends to zero as $H \to 0$. The same argument as in \cite[Theorem
  5.1]{Huisken-Yau:1996} shows that for a given vector $b\in\R^n$, we
  have that
  \begin{equation}
    \label{eq:cm_initial integral}
    0 = \int_{\Sigma_H} (H^{\Sigma_H} - H^{\Sigma_H}_\delta)\, \delta( b, \nu^{\Sigma_H}_\delta)\dmu_\delta.
  \end{equation}
  Here, we use the subscript $\delta$ to indicate that
  a geometric quantity is computed with respect to the Euclidean
  metric. We claim that
  \begin{equation}
    \label{eq:cm_target_integral}
    \left| \int_{S_H} (H^{S_H} - H^{S_H}_\delta)\, \delta (b, \nu_\delta^{S_H}) \dmu_\delta\right|
    \leq c R^{2-n}.
  \end{equation}
  To see this, we use the diffeomorphism $F_H : S_H \to \Sigma_H$ to
  pull back the integral \eqref{eq:cm_initial integral} to
  $S_H$. Using \eqref{eq:cm_sphere_approx}, we obtain that
  \begin{equation} \label{eqn:cm_sphere_approx_consequence}
    \begin{split}
      | \delta (\nu^{S_H}_\delta (y), b) -  \delta
      (\nu^{\Sigma_H}_\delta (F_H(y)), b)| 
      &\leq c R^{2-n}, \\
      \left| F^*_H (\H^{n-1}_\delta \lfloor {\Sigma} ) -
        \H^{n-1}_\delta \lfloor {S_H}\right| 
      &\leq c R^{2-n}, \text{ and } \\
      \big| (H^{\Sigma_H} - H_\delta^{\Sigma_H}) - (H^{S_H} -
      H_\delta^{S_H}) \big|
      &\leq c R^{3-2n}. 
    \end{split}
  \end{equation}
  In conjunction with \eqref{eq:cm_initial integral}, this gives
  \eqref{eq:cm_target_integral}.
  
  Note that $H^{S_H}_\delta = \frac{n-1}{r_H}$ and $\nu^{S_H}_\delta
  (x)= r_H^{-1}(x-p)$ for $x \in S_H$. If we let $b = e_l$ be a
  coordinate vector, it follows from \eqref{eq:cm_target_integral} and
  \eqref{eq:lans_center_of_mass} that
  \begin{equation}
    \label{eq:cm_centerest1}
    \left|  (p_H-\mathcal{C})_l \right|
    \leq
    c ( R^{1-2\gamma} + R^{-\gamma} + R^{3-n} ).    
  \end{equation}
  The second error term tends to zero as $H\to \infty$. We will assume for the moment that $n\geq 4$, so
  that the third error term tends to zero as $H\to\infty$. If also $\gamma>\frac12$, then we are done. If this is not the case
  then \eqref{eq:cm_centerest1} still gives that for $H \in (0, H_0)$ sufficiently small we have that
  \begin{equation*}
    |p_H| \leq c R^{1-2\gamma}.
  \end{equation*}
  We are now in the position to apply Lemma \ref{lemma:alternative_center} with
  $\gamma_1=2\gamma$ to improve \eqref{eq:cm_centerest1} to
  \begin{equation*}
    \left|  (p_H-\mathcal{C})_l \right|
    \leq
    c ( R^{1-3\gamma} + R^{-\gamma} + R^{3-n} ).
  \end{equation*}
  Repeating this argument a finite number of times, we find that
  \begin{equation*}
    \left|  (p_H-\mathcal{C})_l \right|
    \leq
    c ( R^{-\gamma} + R^{3-n} ).
  \end{equation*}
  This concludes the proof in the case $n\geq 4$.

  We will use a different method to approximate $\Sigma_H$ by a sphere
  when $n=3$, following \cite[Proposition 4.3]{Metzger:2007}, to
  improve our estimates in this dimension. First, note that the
  curvature estimate $|\tf_g | \leq c R^{-2-\gamma}$ implies that
  $|\tf_{g_m}|_{g_m} \leq cR^{-2-\gamma}$. Using conformal invariance,
  we see that
  \begin{equation*}
    \int_{\Sigma_H} |\tf_\delta|_\delta^2 d \H^2_\delta \leq c R^{-2-2\gamma}. 
  \end{equation*}
  Results of C. De Lellis
  and S. M\"uller \cite{DeLellis-Muller:2005,DeLellis-Muller:2006}
  imply that there exists a sphere $S_H
  = S_{r_H}(p_H) \subset \R^3$ with $r_H =
  \sqrt{\mathcal{H}_\delta^2(\Sigma)/4\pi}$ and a conformal map
  $\psi_H : S_H \to \Sigma_H\subset\R^3$ with conformal factor $w_H$
  such that $\psi_H^* \delta = w_H^2 \delta|_{S_H}$. Moreover,  we have that \begin{eqnarray*}  
    \| H_\delta^{\Sigma_H} - \frac{2}{r_H} \|_{L^2(\Sigma_H)} &\leq& cR^{-1-\gamma},
    \\
    \sup_{S_H} | \psi_H - \id | &\leq& c R^{-\gamma},
    \\
    \| \nu_\delta^{\Sigma_H} \circ \psi - \nu_\delta^{S_H} \|_{L^2(S_H)} &\leq& cR^{-\gamma}, \text{ and}
    \\
    \| w_H - 1 \|_{L^2(S_H)} &\leq& cR^{-\gamma}.
  \end{eqnarray*}
  The first three of these estimates are immediate from \cite[Theorem
  1.1]{DeLellis-Muller:2005}, using Sobolev embedding and
  rescaling. The last estimate follows from \cite[Theorem
  1.1]{DeLellis-Muller:2006}. Using these estimates in place of (\ref{eqn:cm_sphere_approx_consequence}) in the above argument, we obtain that 
  \begin{equation*}
    \label{eq:cm_target_integral_3d}
    \left| \int_{S_H} (H^{S_H} - H^{S_H}_\delta)\, \delta (b, \nu_\delta^{S_H}) d \H^{2}_\delta\right|
    \leq c R^{-1-\gamma} \text{ and }
    |a(H) - p_H|
    \leq
    c R^{-\gamma}. 
  \end{equation*} 
The rest of the argument proceeds exactly as in the case $n \geq 4$.    
\end{proof}
  
\begin{remark} \label{rem:Huangproof} We give a brief outline of L.-H. Huang's proof of Theorem \ref{thm:center_of_mass} in dimension $n=3$. First, the foliation through volume preserving stable constant mean curvature spheres that is constructed in \cite{Huisken-Yau:1996} using volume preserving mean curvature flow coincides with the foliation found in \cite{Ye:1996}. R. Ye's construction of the leaves in \cite{Ye:1996} proceeds by perturbing large coordinate spheres in Euclidean space to constant mean curvature surfaces with respect to the asymptotically flat metric. The obstruction to accomplishing this stems from the translational symmetries of Euclidean space, which account for the co-kernel of the linearized mean curvature operator on volume preserving deformations. The integral in \eqref{eq:lans_center_of_mass} measures the part of the ``error" that lies in the co-kernel. This obstruction vanishes on Euclidean coordinate sphere whose center is (close) to the center of mass of the initial data set. The size of the required perturbations is then so small that the centers of gravity of the constant mean curvature surfaces approach the centers of the Euclidean spheres they are constructed from, as their diameter tends to infinity.   
\end{remark}


\section{The isoperimetric mass} \label{sec:isomass}

Throughout this section, we let $(M, g)$ be a three dimensional initial data set as in Definition \ref{def:initial_data_sets}, except that we allow $M$ to have a non-empty compact boundary. If $\partial M \neq \emptyset$, we assume that $\partial M$ is a minimal surface, and that there are no other compact minimal surfaces in $M$. We modify the definition of the isoperimetric profile function $A_g : [0, \infty) \to \R$ as follows: 
\begin{eqnarray*} A_g(V) := \inf \{ \H^2_g(\partial^*\Omega \cap M) :  \Omega \subset \hat M \text{ is a Borel set with finite perimeter} \\ \text{containing } \hat M \setminus M, \text{ and } \CL^3_g(\Omega \cap M) = V\}. \end{eqnarray*}
Here, $\hat M$ is an extension of $M$ across its boundary, if $\partial M \neq \emptyset$, and $\hat M = M$ otherwise. Note that $A_g(V)$ is independent of the choice of $\hat M$. The classical regularity theory for this minimization problem with volume constraint and obstacle is discussed with precise references to the literature in \cite[Section 4] {Eichmair-Metzger:2010}. For $V>0$ the minimizers have a smooth boundary that is disjoint from $\partial M$. As before, minimizers will be called isoperimetric regions. 
  
\begin{definition} [G. Huisken \protect{\cite{Huisken-pc, Huisken:2006-iso}}] \label{def:isomasshuisken}
  We say that a sequence of smooth bounded regions
  $\{\Omega_i\}_{i=1}^\infty$ is an exhaustion of $(M, g)$, if $\Omega_i \subset \Omega_{i+1}$ for each
  $i=1, 2, \ldots$ and if $\bigcup_{i=1}^\infty \Omega_i = M$. The isoperimetric mass of $(M, g)$ is defined as
  \begin{eqnarray} \label{def:Huiskeniso}
    m_{\text{iso}} (M, g) := \sup \Big \{ \limsup_{i \to \infty} \frac{2}{\H^2_g(\partial \Omega_i)} \left( \CL^3_g (\Omega_i) - \tfrac{1}{6 \sqrt \pi} \H^2_g(\partial \Omega_i)^{\frac{3}{2}}\right) : \\ \{\Omega_i\}_{i=1}^\infty \text{ is an exhaustion of } (M, g) \Big \}. \nonumber
  \end{eqnarray}
\end{definition}

\begin{theorem}[G. Huisken  \protect{\cite{Huisken-pc}, see also \cite{Huisken:2006-iso}}] \label{thm:isomasshuisken}
  Let $(M,g)$ be a three dimensional initial data set. Assume that the scalar curvature of $(M, g)$ is non-negative. Then $m_\text{iso}(M,g)\geq 0$. Equality holds if, and only if, $(M,g)=(\R^3,\sum_{i=1}^3 dx_i^2)$.
\end{theorem}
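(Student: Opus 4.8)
The plan is to build a distinguished exhaustion from the weak inverse mean curvature flow (IMCF) of Huisken--Ilmanen and to read off $m_{\text{iso}}(M,g)\geq 0$ from the Geroch monotonicity of the Hawking mass; the rigidity statement will come from the equality case of that monotonicity.

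\textbf{Lower bound.} I would run the proper weak solution $\{\Sigma_t\}_{t\geq 0}$ of IMCF in $(M,g)$ starting from a point $p\in M$ when $\partial M=\emptyset$, and from $\partial M$ when $\partial M\neq\emptyset$. By the Huisken--Ilmanen theory (which needs only the weak asymptotic flatness of Definition \ref{def:initial_data_sets}) this flow is proper and exhausts $M$; moreover $m_H(\Sigma_t)=\sqrt{\H^2_g(\Sigma_t)/16\pi}\,\big(1-\tfrac1{16\pi}\int_{\Sigma_t}H^2\big)$ is non-decreasing in $t$ (this is where $R\geq 0$ enters), with $m_H(\Sigma_0)\geq 0$ in both cases, so $\int_{\Sigma_t}H^2\leq 16\pi$ for all $t$. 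Writing $A(t):=\H^2_g(\Sigma_t)$ (so $\tfrac{d}{dt}A=A$) and $V(t):=\CL^3_g(\Omega_t)$ for the enclosed region, two applications of Cauchy--Schwarz give
\[
  \tfrac{d}{dt}V(t)=\int_{\Sigma_t}\tfrac1H\;\geq\;\frac{A(t)^2}{\int_{\Sigma_t}H}\;\geq\;\frac{A(t)^{3/2}}{\big(\int_{\Sigma_t}H^2\big)^{1/2}}\;\geq\;\frac{A(t)^{3/2}}{4\sqrt\pi}\;=\;\tfrac{d}{dt}\Big(\tfrac1{6\sqrt\pi}A(t)^{3/2}\Big).
\]
Hence $f(t):=V(t)-\tfrac1{6\sqrt\pi}A(t)^{3/2}$ is non-decreasing (this persists across the jumps of the weak flow, where $V$ increases and $A$ is continuous), so $f(t)\geq f(0)$, and since $A(t)\to\infty$ I get $\tfrac{2}{A(t)}f(t)\geq 2f(0)/A(t)\to 0$. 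Feeding the exhaustion $\{\Omega_{t_i}\}$, $t_i\to\infty$, into the definition of $m_{\text{iso}}$ yields $m_{\text{iso}}(M,g)\geq 0$.

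\textbf{Rigidity.} For $(M,g)=(\R^3,\sum_{i=1}^3 dx_i^2)$, the Euclidean isoperimetric inequality gives $\CL^3_g(\Omega)\leq\tfrac1{6\sqrt\pi}\H^2_g(\partial\Omega)^{3/2}$ for every region, so every term in the supremum defining $m_{\text{iso}}$ is non-positive, whence $m_{\text{iso}}\leq 0$; with the lower bound $m_{\text{iso}}=0$, and the exhaustion by round balls shows this value is attained. Conversely, suppose $R\geq 0$ and $m_{\text{iso}}(M,g)=0$. Applied to the IMCF exhaustion this forces $\limsup_{t\to\infty}\tfrac{2}{A(t)}f(t)\leq 0$, and with the previous paragraph $f(t)=o(A(t))=o(e^t)$. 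I would then rerun the chain of inequalities above quantitatively: from $\int_0^T f'(t)\,dt\leq f(T)-f(0)=o(e^T)$ together with the elementary estimate $x^{-1/2}-1\geq\tfrac12(1-x)$ for $x\in(0,1]$ applied to $x=\tfrac1{16\pi}\int_{\Sigma_t}H^2$, one obtains $\int_0^T e^t\,m_H(\Sigma_t)\,dt=o(e^T)$. Since $m_H$ is non-decreasing and non-negative along the flow, this forces $m_H(\Sigma_0)=0$ and then $m_H(\Sigma_t)\equiv 0$ (in particular $\partial M=\emptyset$, since otherwise $m_H(\Sigma_0)=\sqrt{\H^2_g(\partial M)/16\pi}>0$). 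The equality case of the Geroch monotonicity for weak IMCF then identifies the region swept out by the flow --- hence, since the flow exhausts $M$, all of $(M,g)$ --- with a spatial Schwarzschild manifold of vanishing mass, i.e.\ with $(\R^3,\sum_{i=1}^3 dx_i^2)$.

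\textbf{Main obstacle.} The real content lies in the weak IMCF machinery of Huisken--Ilmanen: existence of a proper weak solution emanating from a point (respectively from $\partial M$) and exhausting $M$ under only the hypotheses of Definition \ref{def:initial_data_sets}, validity of the Geroch monotonicity across the jumps of the weak flow, and --- the genuinely delicate ingredient --- its equality case. Once the flow is in hand, extracting $m_H\equiv 0$ from $m_{\text{iso}}=0$ is elementary, but the concluding step, that constancy of the Hawking mass along a weak IMCF implies local isometry to Schwarzschild, is precisely the statement underlying the equality case of the Riemannian Penrose inequality.
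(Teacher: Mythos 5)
The paper does not actually prove this theorem: it is quoted from G.~Huisken (\cite{Huisken-pc, Huisken:2006-iso}), and the only indication of a proof given in the text is the remark, following X.-Q.~Fan, P.~Miao, Y.~Shi, and L.-T.~Tam, that $m_{\text{iso}}(M,g)\geq m_{\text{ADM}}(M,g)$, so that both the inequality and its rigidity reduce to the positive mass theorem. Your route is therefore genuinely different from the one the paper points to, but it is in spirit the original argument behind Huisken's definition: the quantity $V-\tfrac{1}{6\sqrt\pi}A^{3/2}$ is designed precisely so that Geroch monotonicity plus Cauchy--Schwarz makes it monotone along inverse mean curvature flow, and your chain of inequalities (including the quantitative version $f'(t)\geq \tfrac12 A(t)\,m_H(\Sigma_t)$ used for rigidity) is correct. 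What your approach buys is a self-contained derivation of the sign of $m_{\text{iso}}$ directly from $R\geq 0$; what the paper's route buys is that all the hard analysis is outsourced to the positive mass theorem rather than to the weak IMCF machinery. Two points deserve explicit acknowledgment beyond what you wrote. First, the level sets of the weak flow are only $C^{1,\alpha}$, so to feed $\{\Omega_{t_i}\}$ into Definition \ref{def:isomasshuisken} you must smooth them with negligible change of area and volume, and the inequality $\int_{\Sigma_t}\tfrac1H\geq A^2/\int_{\Sigma_t}H$ requires knowing that $H>0$ $\H^2$-a.e.\ on $\Sigma_t$ for a.e.\ $t$ (which does follow from the co-area formula applied to $\{|\nabla u|=0\}$). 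Second, and more seriously, the concluding rigidity step --- that $m_H\equiv 0$ along a weak IMCF emanating from a point forces flatness --- is exactly the rigidity statement of the positive mass theorem; Huisken--Ilmanen's equality analysis is carried out for the Penrose inequality (flow from the horizon), and adapting it to the flow from a point is not a routine citation. So your proof does not really circumvent the positive mass theorem's rigidity; it relocates it, as you correctly flag in your final paragraph.
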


Subsequent to the work of G. Huisken, it was observed by X.-Q. Fan, P. Miao, Y. Shi, and L.-T. Tam in \cite {Fan-Shi-Tam:2009} that the ``$\limsup$" in G. Huisken's definition (\ref {def:Huiskeniso}) recovers the ADM-mass of the initial data set when evaluated along exhaustions by concentric coordinate balls in an asymptotic coordinate system. In particular, $m_{iso}(M, g) \geq m_{ADM}(M, g)$ and the conclusion of Theorem \ref{thm:isomasshuisken} is seen to be a consequence of the positive mass theorem. 

\begin{definition} We define the modified isoperimetric mass of $(M, g)$ as 
  \label{def:isomass_alldim}
  \begin{equation*}
    \tilde m_{\text{iso}} (M,g)
    :=
    \limsup_{V\to\infty}
  \frac{2 }{A_g(V)}
      \left( V  - \frac{1}{6 \sqrt{\pi}} A_g(V)^{\frac{3}{2}}\right). 
  \end{equation*}
\end{definition}

It is easy to see that $\tilde m_{iso} (M, g) \geq m_{iso}(M, g)$. In particular, Theorem \ref{thm:isomasshuisken} holds with $m_{iso}(M, g)$ replaced by $\tilde m_{iso}(M, g)$. 

\begin{proof} [Proof of Theorem  \ref{thm:isomass_exist1}] In view of Theorem \ref{thm:isomasshuisken}, we may assume that $\tilde m_{iso} (M, g) >0$. Let $V>0$. By Proposition \ref{prop:cut_and_paste_for_minimizing_sequence}, there exists an isoperimetric region $\Omega \subset  M$ (it may be empty) and a sequence of coordinate balls $B(p_i, r_i)$ with $p_i \to \infty$ and $0 \leq r_i \to r \in [0, \infty)$ as $i \to \infty$ such that $\CL^3_g(\Omega) + \CL^3_g(B(p_i, r_i)) = V$ and such that $\H^{2}_g(\partial \Omega) + \H^{2}_g(\partial B(p_i, r_i)) \to A_g(V)$. Our goal is to show that by choosing $V$ sufficiently large we can arrange for $\CL^3_g(\Omega)$ to be greater than any given threshold. Let $V>0$ large be such that 
  \begin{equation*}
    \frac{2 }{A_g(V)}
      \left( V  - \frac{1}{6 \sqrt{\pi}} A_g(V)^{\frac{3}{2}}\right)
    > \frac{\tilde m_\text{iso} (M, g)}{2}.
  \end{equation*}
  Combining this with the lower bound $A_g(V)\geq 4 \pi r^2$
  we obtain the estimate
  \begin{equation*}
   \CL^3_g(\Omega) \geq  \frac{ \tilde m_\text{iso} (M, g) }{4}  A_g(V). 
  \end{equation*}
 That $A_g(V) \to \infty$ as $V \to \infty$ follows from Lemma \ref{lem:crudeisoperimetricinequality}. 
 \end{proof}
 

\appendix

\section{Integral decay estimates} \label{sec:integral_decay_estimates}

Our computations in this appendix take place in the part of an initial
data set $(M, g)$ that is diffeomorphic to $\R^n \setminus B_1(0)$
and where \begin{eqnarray*}
 r |g_{ij } - \delta_{ij}| \leq C \text{ for all } r
  \geq 1. \end{eqnarray*} For Corollary \ref{lem:volume_comparison} we
require in addition that for some $\gamma \in (0, 1]$, 
\begin{eqnarray} 
\label{eqn: decay assumptions integral decay Schwarzschild}
r^{n-2+\gamma} |g_{ij} - \left(1 + \frac{m}{2r^{n-2}} \right)^{\frac{4}{n-2}}
\delta_{ij}| \leq C \text{ for all } r \geq 1,
\end{eqnarray}
i.e. that $(M, g)$ is $\C^0$-asymptotic to Schwarzschild of mass $m>0$. The proofs of the statements in this appendix are straightforward extensions of those in \cite[Appendix A]{Eichmair-Metzger:2010} to higher dimensions, and we omit them. 
\\

\begin{lemma} \label{lem:area_growth_decay} Let $(M, g)$ be an initial
  data set. Let $\rho \geq 1$ and let $\Sigma \subset M$ be a closed hypersurface
  such that $\H^{n-1}_g(\Sigma \cap B_r \setminus B_\rho)
  \leq \Theta r^{n-1}$ for all $r \geq \rho$. Then the estimate
\begin{eqnarray*} 
\int_{\Sigma \setminus B_\rho} r^{-p} d \H^{n-1}_g \leq  \frac{p}{p -
(n-1)} \Theta \rho^{(n-1) - p}
\end{eqnarray*} holds for every $p>(n-1)$. 
\begin{proof} The proof uses the co-area formula as in \cite[p.
52]{Schoen-Yau:1979-pmt1}. 
\end{proof}
\end{lemma}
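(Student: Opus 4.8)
The plan is to reduce the claimed bound to an elementary one‑dimensional integral by means of the layer‑cake representation of $r^{-p}$ together with Tonelli's theorem (this is the form in which the estimate is used in \cite[p.~52]{Schoen-Yau:1979-pmt1}). First I would introduce the non‑decreasing function $f(s) := \H^{n-1}_g(\Sigma \cap B_s \setminus B_\rho)$ for $s \geq \rho$; by hypothesis it satisfies $f(\rho) = 0$ and $f(s) \leq \Theta s^{n-1}$.

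Next I would rewrite the integrand pointwise: for each $x \in \Sigma \setminus B_\rho$ one has $r(x)^{-p} = p \int_{r(x)}^{\infty} s^{-p-1}\, ds$. Substituting this into $\int_{\Sigma \setminus B_\rho} r^{-p}\, d\H^{n-1}_g$ and interchanging the order of integration — which is justified by Tonelli's theorem since $r$ is continuous (hence measurable) on $\Sigma$ and all integrands are non‑negative — gives
\[
  \int_{\Sigma \setminus B_\rho} r^{-p}\, d\H^{n-1}_g
  = p \int_{\rho}^{\infty} s^{-p-1}\, \H^{n-1}_g\big(\{x \in \Sigma \setminus B_\rho : r(x) < s\}\big)\, ds
  \leq p \int_{\rho}^{\infty} s^{-p-1}\, f(s)\, ds ,
\]
where in the last step I use that $\{x \in \Sigma \setminus B_\rho : r(x) < s\} \subset \Sigma \cap B_s \setminus B_\rho$. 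Inserting the growth bound $f(s) \leq \Theta s^{n-1}$ and using $p > n-1$ (so that $n-2-p < -1$ and the integral converges), I would then compute
\[
  \int_{\Sigma \setminus B_\rho} r^{-p}\, d\H^{n-1}_g
  \leq p\,\Theta \int_{\rho}^{\infty} s^{\,n-2-p}\, ds
  = p\,\Theta\, \frac{\rho^{\,(n-1)-p}}{p-(n-1)}
  = \frac{p}{p-(n-1)}\, \Theta\, \rho^{\,(n-1)-p},
\]
which is exactly the asserted estimate.

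There is no real obstacle in this argument; the only point requiring a modicum of care is the interchange of the two integrations, which is immediate from Tonelli. Equivalently, one may phrase the identical computation as an integration by parts in the Stieltjes integral $\int_{\rho}^{\infty} s^{-p}\, df(s)$: the boundary term at $\rho$ vanishes because $f(\rho)=0$, and the boundary term at infinity vanishes because $s^{-p} f(s) \leq \Theta\, s^{\,n-1-p} \to 0$ as $s \to \infty$, again using $p > n-1$.
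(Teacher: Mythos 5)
Your proof is correct and fills in exactly the computation that the paper's one-line reference to the co-area formula is pointing at: reduce the surface integral to a one-dimensional integral against the area-growth function $f(s)=\H^{n-1}_g(\Sigma\cap B_s\setminus B_\rho)$ and use the bound $f(s)\leq\Theta s^{n-1}$ together with $p>n-1$. Your layer-cake/Tonelli packaging (equivalently, the Stieltjes integration by parts you mention) is a slightly more elementary way to reach the same one-dimensional estimate, and all the steps, including the convergence of the boundary term at infinity, check out.
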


\begin{corollary} \label{cor:surface_comparison} Let $(M, g)$ be an
  initial data set. For all $\rho \geq 1$ and $\gamma \in (0, 1]$ and every closed
  hypersurface $\Sigma \subset M$ with $\H^{n-1}_g(\Sigma \cap B_r
  \setminus B_\rho) \leq \Theta r^{n-1}$ for all $r \geq \rho$ one has
\begin{eqnarray*}
  \int_{\Sigma \setminus B_\rho} r^{-(n-2+\gamma)} d \H^{n-1}_g
  \leq
  \rho^{-\beta} \H^{n-1}_g(\Sigma \setminus B_\rho)^{\frac{1-\gamma+\beta}{n-1}}
  \left(\frac{(n-2+\gamma) \Theta}{\beta}\right)^{\frac{n-2 +\gamma - \beta}{n-1}}
\end{eqnarray*} for every $\beta \in (0, n-2+\gamma)$. 
\end{corollary}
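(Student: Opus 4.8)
The plan is to interpolate, by means of Hölder's inequality, between the trivial bound $\int_{\Sigma \setminus B_\rho} 1\, d\H^{n-1}_g = \H^{n-1}_g(\Sigma \setminus B_\rho)$ and the weighted decay estimate of Lemma \ref{lem:area_growth_decay}, which is available for integrands $r^{-p}$ with $p > n-1$. An interpolation is needed because the weight $r^{-(n-2+\gamma)}$ in the statement has exponent $n-2+\gamma \leq n-1$ (since $\gamma \leq 1$), so Lemma \ref{lem:area_growth_decay} does not apply to it directly; trading a fraction of the weight for a power of the total area of $\Sigma \setminus B_\rho$ repairs this. (If $\H^{n-1}_g(\Sigma\setminus B_\rho) = \infty$ there is nothing to prove, since the exponent $\tfrac{1-\gamma+\beta}{n-1}$ on the right-hand side is positive; so we may assume this quantity is finite.)

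Concretely, given $\beta \in (0, n-2+\gamma)$, I would set
\[
  p := \frac{(n-1)(n-2+\gamma)}{n-2+\gamma-\beta}, \qquad \theta := \frac{n-2+\gamma}{p} = \frac{n-2+\gamma-\beta}{n-1} \in (0,1),
\]
and note that $\beta > 0$ forces $p > n-1$, so Lemma \ref{lem:area_growth_decay} applies with this $p$. Writing $r^{-(n-2+\gamma)} = (r^{-p})^{\theta}\cdot 1^{1-\theta}$ and applying Hölder's inequality on $\Sigma \setminus B_\rho$ with conjugate exponents $1/\theta$ and $1/(1-\theta)$ gives
\[
  \int_{\Sigma \setminus B_\rho} r^{-(n-2+\gamma)}\, d\H^{n-1}_g \leq \Big( \int_{\Sigma \setminus B_\rho} r^{-p}\, d\H^{n-1}_g \Big)^{\theta} \, \H^{n-1}_g(\Sigma \setminus B_\rho)^{1-\theta}.
\]
Then I would substitute the bound $\int_{\Sigma \setminus B_\rho} r^{-p}\, d\H^{n-1}_g \leq \frac{p}{p-(n-1)} \Theta\, \rho^{(n-1)-p}$ furnished by Lemma \ref{lem:area_growth_decay}.

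It then remains only to simplify, using the specific value of $p$: one computes $\frac{p}{p-(n-1)} = \frac{n-2+\gamma}{\beta}$, $\big((n-1)-p\big)\theta = -\beta$, and $1-\theta = \frac{1-\gamma+\beta}{n-1}$, so that the right-hand side collapses to exactly $\rho^{-\beta}\, \H^{n-1}_g(\Sigma \setminus B_\rho)^{\frac{1-\gamma+\beta}{n-1}}\big(\frac{(n-2+\gamma)\Theta}{\beta}\big)^{\frac{n-2+\gamma-\beta}{n-1}}$, which is the asserted inequality. There is no genuine obstacle here; the only points deserving a moment's care are verifying that the chosen $p$ lies in the admissible range $p>n-1$ of Lemma \ref{lem:area_growth_decay} and checking that the various exponents combine into the stated closed form, both of which are short algebraic verifications.
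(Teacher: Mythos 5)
Your argument is correct: the Hölder interpolation between the trivial area bound and Lemma \ref{lem:area_growth_decay} with $p=\tfrac{(n-1)(n-2+\gamma)}{n-2+\gamma-\beta}$ is exactly the intended proof (the paper omits it, deferring to the three-dimensional version in \cite[Appendix A]{Eichmair-Metzger:2010}, which proceeds the same way), and your exponent bookkeeping checks out. No issues.
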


\begin{lemma} \label{lem:volume_comparison} Let $(M,g)$ be an initial
  data set for which the decay assumptions (\ref{eqn: decay
    assumptions integral decay Schwarzschild}) hold. There is a
  constant $C' \geq 1$ depending only on $C$ such that for every $\rho
  \geq 1$ and every bounded measurable subset $\Omega \subset M$ one
  has
  \begin{eqnarray*}
    |\CL^n_g(\Omega \setminus B_\rho) - \CL^n_{g_m}(\Omega
    \setminus B_\rho)|
    \leq
    C' \left( \frac{n-1+\gamma-\alpha}{\alpha - 1}\right)^{\frac{n - 1
        + \gamma - \alpha}{n}} \CL^n_{g}(\Omega \setminus
    B_\rho)^\frac{1-\gamma + \alpha}{n} \rho^{1- \alpha}
\end{eqnarray*} for every $\alpha
\in (1, n-1+\gamma)$. 

\begin{proof}
The volume elements differ by terms $O(r^{2-n-\gamma})$.
\end{proof}
\end{lemma}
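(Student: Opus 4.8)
The plan is to reduce the statement to a pointwise comparison of the two volume densities, followed by an elementary weighted integral estimate handled by the coarea formula; throughout, $c$ denotes a constant depending only on $C$ and $n$ that may change from line to line. In the coordinates of Definition \ref{def:initial_data_sets} one has $d\CL^n_g = \sqrt{\det g}\;dx$ and $d\CL^n_{g_m} = \sqrt{\det g_m}\;dx$ with $dx$ Lebesgue measure. Since both $(g)_{ij}$ and $(g_m)_{ij}$ stay within a fixed compact neighbourhood of $\delta_{ij}$ for $r\geq 1$, the map $h\mapsto\sqrt{\det h}$ is Lipschitz on the relevant range of matrices, so the decay hypothesis \eqref{eqn: decay assumptions integral decay Schwarzschild} gives $|\sqrt{\det g}-\sqrt{\det g_m}|\leq c\,r^{-(n-2+\gamma)}$ on $M\setminus B_1$. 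As $\sqrt{\det g}$ is bounded below there as well, $dx\leq c\,d\CL^n_g$, and hence
\[
  \big|\CL^n_g(\Omega\setminus B_\rho)-\CL^n_{g_m}(\Omega\setminus B_\rho)\big|
  \leq c\int_{\Omega\setminus B_\rho} r^{-(n-2+\gamma)}\,d\CL^n_g .
\]

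Next I would bound the weighted integral on the right by the coarea formula. Set $\phi(s):=\H^{n-1}_g(S_s\cap\Omega)$. On $M\setminus B_1$ the metric is comparable to $\delta$, so $\phi(s)\leq c\,s^{n-1}$ for $s\geq\rho$, and $|\nabla r|_g$ is bounded above and below; applying the coarea formula to $r$ therefore yields both $\int_{\Omega\setminus B_\rho} r^{-(n-2+\gamma)}\,d\CL^n_g\leq c\int_\rho^\infty s^{-(n-2+\gamma)}\phi(s)\,ds$ and $\int_\rho^\infty\phi(s)\,ds\leq c\,\CL^n_g(\Omega\setminus B_\rho)$. Writing $\theta:=\tfrac{1-\gamma+\alpha}{n}$, the hypothesis $\alpha\in(1,n-1+\gamma)$ says exactly that $\theta\in(0,1)$ and $\theta>\tfrac{2-\gamma}{n}$. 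Splitting $\phi=\phi^{\theta}\phi^{1-\theta}$, using $\phi(s)^{1-\theta}\leq (c\,s^{n-1})^{1-\theta}$, and applying H\"older's inequality with exponents $\tfrac1\theta$ and $\tfrac1{1-\theta}$,
\[
  \int_\rho^\infty s^{-(n-2+\gamma)}\phi(s)\,ds
  \leq c\Big(\int_\rho^\infty\phi(s)\,ds\Big)^{\theta}\Big(\int_\rho^\infty s^{\frac{(n-1)(1-\theta)-(n-2+\gamma)}{1-\theta}}\,ds\Big)^{1-\theta}.
\]
The exponent of $s$ is $<-1$ precisely because $\theta>\tfrac{2-\gamma}{n}$, i.e. $\alpha>1$; evaluating the integral and raising to the power $1-\theta$ gives $\big(\tfrac{1-\theta}{(n-2+\gamma)-n(1-\theta)}\big)^{1-\theta}\rho^{\,n(1-\theta)-(n-2+\gamma)}$, and since $n(1-\theta)=n-1+\gamma-\alpha$ and $(n-2+\gamma)-n(1-\theta)=\alpha-1$ this equals $\big(\tfrac{n-1+\gamma-\alpha}{n(\alpha-1)}\big)^{\frac{n-1+\gamma-\alpha}{n}}\rho^{\,1-\alpha}$.

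Combining the two displays with $\int_\rho^\infty\phi\,ds\leq c\,\CL^n_g(\Omega\setminus B_\rho)$ gives
\[
  \big|\CL^n_g(\Omega\setminus B_\rho)-\CL^n_{g_m}(\Omega\setminus B_\rho)\big|
  \leq c\Big(\tfrac{n-1+\gamma-\alpha}{n(\alpha-1)}\Big)^{\frac{n-1+\gamma-\alpha}{n}}\CL^n_g(\Omega\setminus B_\rho)^{\frac{1-\gamma+\alpha}{n}}\rho^{\,1-\alpha},
\]
and since $n^{-(n-1+\gamma-\alpha)/n}\leq 1$ the extra factor of $n$ is absorbed into $C'$, which then depends only on $C$.

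The whole argument is routine, which is why the paper omits it; the one place requiring genuine care is the exponent arithmetic in the H\"older step, where one must check that the integrability threshold of the $s$-integral is attained exactly at $\alpha=1$, that the resulting power of $\rho$ is $1-\alpha$, and that the constant takes the stated form $\big(\tfrac{n-1+\gamma-\alpha}{\alpha-1}\big)^{(n-1+\gamma-\alpha)/n}$. (A few auxiliary bounded quantities — the comparability of $|\nabla r|_g$, $\sqrt{\det g}$, and $\H^{n-1}_g(S_s)$ with their Euclidean analogues on $M\setminus B_1$ — are controlled by the crude decay $r|g_{ij}-\delta_{ij}|\leq C$ alone and are likewise absorbed into $C'$.)
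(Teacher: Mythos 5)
Your proposal is correct and follows exactly the route the paper intends: the one-line hint (``the volume elements differ by terms $O(r^{2-n-\gamma})$'') reduces the claim to bounding $\int_{\Omega\setminus B_\rho} r^{-(n-2+\gamma)}\,d\CL^n_g$, and your coarea--H\"older interpolation with $\theta=\tfrac{1-\gamma+\alpha}{n}$ is the same computation that underlies Lemma \ref{lem:area_growth_decay} and Corollary \ref{cor:surface_comparison} and the three-dimensional argument in \cite[Appendix A]{Eichmair-Metzger:2010}. The exponent arithmetic and the form of the constant both check out, so nothing further is needed.
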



\section{Hawking-mass} \label{sec:Hawking}

Let $g$ be a rotationally symmetric metric on $(a, b) \times
\S^{n-1}$. Given $r \in (a, b)$, let $A = A(r) := \H^{n-1}_g(\{r\} \times \S^{n-1})$ denote
the area of the coordinate sphere $\{r\} \times \S^{n-1}$, and $H =
H(r)$ its (scalar) mean curvature, computed as the tangential
divergence of the normal vector field in direction
$\partial_r$. Define the function
\begin{equation*}
  m (r) := \left(\frac{A}{\omega_{n-1}}\right)^{(n-2)/(n-1)} \left(1 - \frac{A^{2/(n-1)} H^2}{\omega_{n-1}^{2/(n-1)} (n-1)^2} \right). 
\end{equation*}
This expression appears in different but equivalent form in \cite[(13)]{Lee-Sormani:2011}. It is constructed so as to evaluate to the mass on the centered spheres in the Schwarzschild metric. In particular, it restricts to the usual Hawking mass in dimension $n=3$. 

\begin{lemma} [\protect{Cf. \cite[Section 2]{Lee-Sormani:2011}}] Assume that the scalar curvature of $g$ is non-negative, and that $r \to A(r)$ is non-decreasing. Then $m(r)$ is a non-decreasing function. If $m = m(c) = m(d)$ for some $c, d \in (a, b)$ with $c <d$, then $([c, d] \times \S^{n-1}, g)$ is isometric through a rotationally invariant map to $(\{x \in \R^n : c' \leq |x| \leq d'\}, (1 + \frac{m}{2 |x|^{n-2}})^{\frac{4}{n-2}} \delta_{ij})$ for some $0 < c' < d'$ such that $1 + \frac{m}{2 (c')^{n-2}} > 0$. \end{lemma}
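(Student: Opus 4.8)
\textbf{Strategy.} The plan is to reduce everything to a single ODE computation for $m(r)$ along the radial direction and then invoke a rigidity argument in the equality case. Since the metric $g$ on $(a,b)\times\S^{n-1}$ is rotationally symmetric, I would first put it in a normalized radial form, e.g. write $g = \phi(r)^2\,dr^2 + \psi(r)^2 g_{\S^{n-1}}$, so that $A(r) = \omega_{n-1}\psi(r)^{n-1}$ and the mean curvature of $\{r\}\times\S^{n-1}$ in the direction $\partial_r$ is $H(r) = (n-1)\psi'(r)/(\phi(r)\psi(r))$. Substituting these into the definition of $m(r)$, we get $m(r) = \psi^{n-2}\bigl(1 - (\psi'/\phi)^2\bigr)$ up to the explicit constants; the factors of $\omega_{n-1}$ and $(n-1)$ are designed precisely so that this holds cleanly.

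\textbf{The monotonicity computation.} Next I would differentiate $m(r)$ in $r$ and express the result in terms of the scalar curvature $\Scal_g$ and the derivative $A'(r)$. For a warped product of this type, the scalar curvature of $g$ is a standard expression; the key structural fact is the Gauss equation / traced radial Riccati identity relating $\Scal_g$, the second fundamental form of the spheres, and their intrinsic curvature $(n-1)(n-2)\psi^{-2}$. Carrying this out, $m'(r)$ should come out as a nonnegative combination of $\Scal_g \ge 0$ and $A'(r)\ge 0$ — concretely, something of the shape
\begin{equation*}
  m'(r) = c_n\,\psi^{n-4}\,\frac{\psi'}{\phi}\Bigl(\psi^2\,\Scal_g + (n-1)(n-2)\bigl(\tfrac{\psi'}{\phi}\bigr)^{\!2} \text{-type terms}\Bigr)\,\psi',
\end{equation*}
where the sign is controlled by $\psi' \ge 0$ (which is exactly the hypothesis that $A$ is non-decreasing, since $A' = (n-1)\omega_{n-1}\psi^{n-2}\psi'$) together with $\Scal_g\ge 0$. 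This gives $m'(r)\ge 0$. I expect this computation to be the main obstacle, not because any single step is hard, but because one must keep the warping factors $\phi,\psi$ and the various powers of $n-1$, $n-2$ straight and arrange the terms into a manifestly nonnegative form; a slightly wrong normalization in $m(r)$ would spoil the clean cancellation.

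\textbf{The rigidity case.} Finally, suppose $m(c) = m(d) =: m$ with $c < d$. By monotonicity $m$ is constant on $[c,d]$, so $m'(r) \equiv 0$ there. From the expression for $m'$, this forces $\Scal_g \equiv 0$ and, in the relevant terms, the spheres to be umbilic on $[c,d]$ — equivalently the warped product is, on this interval, a rotationally symmetric \emph{scalar-flat} metric. Such metrics are classified: integrating the resulting ODE for $\psi$ (the scalar-flatness condition reduces the system to a first-order ODE once one uses $\psi$ itself as the radial coordinate, i.e. passes to the area-radius parametrization), one finds $\psi$ realizing exactly the Schwarzschild profile $(1 + \tfrac{m}{2|x|^{n-2}})^{4/(n-2)}\delta_{ij}$ with the parameter equal to the conserved value $m$, and the inequality $1 + \tfrac{m}{2(c')^{n-2}}>0$ follows from positivity of the metric at the inner radius. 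Reparametrizing by the area radius $|x| = \bigl(A/\omega_{n-1}\bigr)^{1/(n-1)}$ and integrating $\phi$ to recover the conformal factor then produces the explicit rotationally invariant isometry onto the Schwarzschild annulus $\{c'\le |x|\le d'\}$, completing the proof.
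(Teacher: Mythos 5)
The paper offers no proof of this lemma at all --- it is stated with a bare reference to Lee--Sormani --- so your argument can only be judged on its own terms. Your reduction and the monotonicity computation are correct and follow the standard route: writing $g=\phi^2\,dr^2+\psi^2 g_{\S^{n-1}}$ gives $m=\psi^{n-2}\bigl(1-(\psi'/\phi)^2\bigr)$, and in arclength parametrization one finds the clean identity $m'=\tfrac{1}{n-1}\psi^{n-1}\psi'\,\Scal$. Note that the exact formula carries a \emph{single} factor of $\psi'$, not two as your schematic display suggests; this is why the hypothesis that $A$ be non-decreasing is genuinely needed, and it enters exactly as you say.

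The rigidity step has a real gap. From $m'\equiv 0$ on $[c,d]$ you conclude $\Scal\equiv 0$, but the identity only yields $\psi'\,\Scal\equiv 0$ pointwise, and the locus $\{\psi'=0\}$ must be handled. A round cylinder $dr^2+\psi_0^2\,g_{\S^{n-1}}$ has $\Scal=(n-1)(n-2)\psi_0^{-2}>0$, constant (hence non-decreasing) area, and constant $m\equiv\psi_0^{n-2}$, yet is not isometric to any Schwarzschild annulus, so the implication you invoke fails without further input. The repair is to observe that constancy of $m$ together with monotonicity of $\psi$ forces $\{\psi'=0\}$ to be an initial segment: if $\psi'(r_1)>0$ and $\psi'(r_2)=0$ with $r_2>r_1$, then $\psi(r_2)^{n-2}=m_0=\psi(r_1)^{n-2}\bigl(1-(\psi'(r_1)/\phi(r_1))^2\bigr)<\psi(r_1)^{n-2}\le\psi(r_2)^{n-2}$, a contradiction. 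One must then either rule out that initial cylindrical segment by assuming the spheres have positive mean curvature at $r=c$ (as is the case in the application, where the spheres lie outside the horizon) or accept that the statement needs this qualification. Once $\psi'>0$ is secured on $(c,d]$, your passage to the area-radius coordinate and the integration of $\psi^{n-2}\bigl(1-(\psi'/\phi)^2\bigr)=m_0$ to the Schwarzschild profile, including the positivity of $1+\tfrac{m}{2(c')^{n-2}}$, is fine.
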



\section{Standard Formulae} \label{sec:standard}

We collect several basic facts from Riemannian geometry, for ease of reference and to set forth the sign conventions that are used throughout the paper. 

We begin with our conventions for the Riemann curvature tensor. Let $X, Y, Z, W$ be vector fields on a Riemannian manifold $(M, g)$. Let $\nabla$ denote the Levi-Civita connection associated with $g$. Then $\Rm (X, Y, Z,  W) = g(\nabla_X (\nabla_Y Z) - \nabla_Y (\nabla_X Z) - \nabla_{[X, Y]} Z, W)$. The Ricci curvature is given by $\Ric (X, Y) := \text{trace}_{g} \Rm (\cdot, X, Y, \cdot)$. The scalar curvature is given by $\Scal := \text{trace}_g \Ric (\cdot, \cdot)$. 

\begin{lemma} [Kulkarni--Nomizu product] \label{lem:kulkarni}
Let $a_{ij}, b_{ij}$ be two symmetric $(0, 2)$ tensors. Then the $(0, 4)$ tensor 
$c_{{ijkl}} := (a \odot b)_{ijkl} =  a_{jk} b_{il} + a_{il} b_{jk} - a_{ik} b_{jl} -
a_{jl} b_{ik}$ has the symmetries of the
Riemann curvature tensor, i.e. $c_{ijkl} = - c_{jikl}$ and $c_{ijkl} =
c_{klij}$. If $(M, g)$ be a Riemannian manifold and if $Rm$ is its Riemann curvature tensor, then $\Rm = \frac{\overset{\circ}{\Ric}
\odot g}{n-2} + \frac{\Scal g \odot g}{2 n (n-1)} + \text{W}$, where $\overset{\circ}{\Ric}:= \Ric - \frac{\Scal}{n}g$ is the trace free part of the Ricci tensor, and where $\text{W}$ is the Weyl curvature. 
\end {lemma}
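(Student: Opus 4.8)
The plan is to treat Lemma~\ref{lem:kulkarni} as the pointwise algebraic statement it is, fixing a point of $M$ and verifying each assertion by direct substitution into the definitions. For the symmetries of the Kulkarni--Nomizu product I would simply permute indices in $c_{ijkl} = a_{jk}b_{il} + a_{il}b_{jk} - a_{ik}b_{jl} - a_{jl}b_{ik}$: interchanging $i$ and $j$ and using that $a$ and $b$ are symmetric gives $c_{jikl} = a_{ik}b_{jl} + a_{jl}b_{ik} - a_{jk}b_{il} - a_{il}b_{jk} = -c_{ijkl}$, and interchanging the pairs $(ij)$ and $(kl)$ and again using the symmetry of $a$ and $b$ gives $c_{klij} = a_{il}b_{jk} + a_{jk}b_{il} - a_{ik}b_{jl} - a_{jl}b_{ik} = c_{ijkl}$. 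The remaining Riemann symmetries, though not needed below, come along for free: antisymmetry in $k,l$ follows by combining these two identities, and the first Bianchi identity $c_{ijkl} + c_{jkil} + c_{kijl} = 0$ follows from a one-line cyclic-sum computation in which the eight terms cancel in pairs.

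For the curvature decomposition I would \emph{define} $\text{W}$ to be the difference $\Rm - (n-2)^{-1}\,\overset{\circ}{\Ric}\odot g - (2n(n-1))^{-1}\,\Scal\, g\odot g$ and then show it is totally trace-free, which is precisely the characterization of the Weyl tensor. The one computational input is the identity
\[
  g^{il}(a\odot g)_{ijkl} = (n-2)\,a_{jk} + (\tr_g a)\,g_{jk},
\]
valid for any symmetric $(0,2)$-tensor $a$, obtained by contracting the four terms of $a\odot g$ against $g^{il}$ and using $g^{il}g_{il} = n$ and $g^{il}g_{jl} = \delta^i_j$. Applying this with the trace-free tensor $a = \overset{\circ}{\Ric}$ gives $g^{il}(\overset{\circ}{\Ric}\odot g)_{ijkl} = (n-2)\,\overset{\circ}{\Ric}_{jk}$, and with $a = g$ gives $g^{il}(g\odot g)_{ijkl} = 2(n-1)\,g_{jk}$. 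Since $\Ric_{jk} = g^{il}\Rm_{ijkl}$ with the conventions fixed above, and since $\overset{\circ}{\Ric}_{jk} + \tfrac1n\Scal\, g_{jk} = \Ric_{jk}$, these two computations give $g^{il}\text{W}_{ijkl} = 0$; by the symmetries established in the first step, this single vanishing trace forces every trace of $\text{W}$ to vanish, so $\text{W}$ is the Weyl tensor and the asserted identity holds.

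I expect no genuine obstacle here: both parts are routine manipulations of algebraic curvature tensors. The only point requiring care is bookkeeping the normalization constants $(n-2)^{-1}$ and $(2n(n-1))^{-1}$ — they are exactly what is needed so that the first two terms of the decomposition reproduce the full Ricci tensor $\Ric_{jk} = \overset{\circ}{\Ric}_{jk} + \tfrac1n\Scal\, g_{jk}$ under the contraction above — together with consistently using one and the same index pair to define the Ricci and Weyl traces throughout.
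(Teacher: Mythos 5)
Your proof is correct and complete: the index permutations for the two symmetries, the contraction identity $g^{il}(a\odot g)_{ijkl}=(n-2)a_{jk}+(\tr_g a)g_{jk}$, and the conclusion that the remainder is totally trace-free (hence the Weyl tensor) all check out against the paper's sign conventions ($\Ric_{jk}=g^{il}\Rm_{ijkl}$). The paper states this lemma without proof, as a standard fact collected in its appendix of formulae, so there is nothing to compare against; your direct verification is exactly the expected argument.
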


\begin{lemma} [Codazzi and Gauss equations] Let $\Sigma $ be a hypersurface
in a Riemannian manifold $(M, g)$, let $p \in \Sigma$, and let $\{e_1,
\ldots, e_{n-1}, \nu\}$ be a local orthonormal frame of $T M$ near $p$ such
that $\nu$ restricts to a unit normal vector field along $\Sigma$. We denote by $\bar g_{ij} = g(e_i, e_j)$ the induced metric on $\Sigma$, and
by $h_{ij} := g (\nabla_{e_i} \nu, e_j)$ the components of the second
fundamental form of $\Sigma$ with respect to $\nu$. Let $\bar g^{ij} h_{ij} = H$ be the scalar mean curvature of $\Sigma$. Then $\overline \nabla_{k} h_{ij} - \overline \nabla_i h_{kj} = \Rm_{k i \nu
j}$, where $\overline \nabla$ denotes covariant differentiation with respect to $\overline g$. We have that $\overline \Rm_{ijkl} = \Rm_{ijkl} + h_{il} h_{jk} - h_{ik} h_{jl}$.       
\end{lemma}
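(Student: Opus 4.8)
The plan is to read off both identities from a pointwise computation of the ambient curvature of $(M,g)$ restricted to $\Sigma$, via the Gauss and Weingarten formulae. Since both sides of the two identities are tensorial, it suffices to verify them at an arbitrary point $p\in\Sigma$ in a frame chosen for convenience: I would take $e_1,\dots,e_{n-1}$ to come from coordinates on $\Sigma$ that are geodesic at $p$, so that $\overline\nabla_{e_i}e_j=0$ and $[e_i,e_j]=0$ at $p$, and extend $\nu$ to any unit normal field along $\Sigma$; all identities below are evaluated at $p$. Differentiating $g(e_j,\nu)=0$ in the direction $e_i$ yields the Gauss formula $\nabla_{e_i}e_j=\overline\nabla_{e_i}e_j-h_{ij}\nu$, and differentiating $g(\nu,\nu)=1$ (which forces $\nabla_{e_i}\nu$ to be tangent to $\Sigma$) yields the Weingarten formula $\nabla_{e_i}\nu=\overline g^{ab}h_{ia}e_b$; both use the sign convention $h_{ij}=g(\nabla_{e_i}\nu,e_j)$ fixed in the statement.

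Next I would set $R(X,Y)Z:=\nabla_X\nabla_Y Z-\nabla_Y\nabla_X Z-\nabla_{[X,Y]}Z$, so that $\Rm(X,Y,Z,W)=g(R(X,Y)Z,W)$, and compute $R(e_k,e_i)e_j$ at $p$. Substituting the Gauss formula, applying $\nabla_{e_k}$, and using $\nabla_{e_k}(h_{ij}\nu)=e_k(h_{ij})\nu+h_{ij}\nabla_{e_k}\nu$ together with the Weingarten formula — noting that at $p$ all terms carrying a factor $\overline\nabla_{e_i}e_j$ drop out — one gets, at $p$,
\[
  R(e_k,e_i)e_j=\overline R(e_k,e_i)e_j-\big(\overline\nabla_k h_{ij}-\overline\nabla_i h_{kj}\big)\nu-h_{ij}\,\overline g^{ab}h_{ka}\,e_b+h_{kj}\,\overline g^{ab}h_{ia}\,e_b ,
\]
where I have used that the frame is geodesic at $p$ to replace $e_k(h_{ij})$ by $\overline\nabla_k h_{ij}$ and $\overline\nabla_{e_k}\overline\nabla_{e_i}e_j-\overline\nabla_{e_i}\overline\nabla_{e_k}e_j$ by $\overline R(e_k,e_i)e_j$. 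Taking the inner product with $\nu$ kills the tangential terms and gives $\Rm_{kij\nu}=-(\overline\nabla_k h_{ij}-\overline\nabla_i h_{kj})$; the antisymmetry $\Rm_{kij\nu}=-\Rm_{ki\nu j}$ in the last two slots then yields the Codazzi identity $\overline\nabla_k h_{ij}-\overline\nabla_i h_{kj}=\Rm_{ki\nu j}$. Taking the inner product with $e_l$ instead gives $\Rm_{kijl}=\overline\Rm_{kijl}-h_{ij}h_{kl}+h_{kj}h_{il}$, and relabelling the indices turns this into the Gauss identity $\overline\Rm_{ijkl}=\Rm_{ijkl}+h_{il}h_{jk}-h_{ik}h_{jl}$.

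Since this is an entirely pointwise tensorial computation, there is no analytic obstacle; the only point needing attention is the bookkeeping of the conventions fixed in Appendix~\ref{sec:standard}. In particular, the appearance of $\nu$ in the third rather than the fourth argument on the right-hand side of the Codazzi identity is forced by the antisymmetry of $\Rm$ in its last two slots combined with the chosen sign for $h_{ij}$, and the precise form of the quadratic correction in the Gauss identity depends on the slot ordering in $\Rm(X,Y,Z,W)=g(\nabla_X\nabla_Y Z-\nabla_Y\nabla_X Z-\nabla_{[X,Y]}Z,W)$. One could instead quote the coordinate-free Gauss and Codazzi--Mainardi equations from a standard reference and translate them into these conventions; I would prefer to include the short direct argument above so that the appendix remains self-contained.
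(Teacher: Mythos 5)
Your derivation is correct: the pointwise computation of $R(e_k,e_i)e_j$ via the Gauss and Weingarten formulae, followed by contraction with $\nu$ and with $e_l$, reproduces exactly the Codazzi and Gauss identities with the sign conventions fixed in Appendix~\ref{sec:standard} (in particular, the placement of $\nu$ in the third slot and the signs of the quadratic terms come out right). The paper states this lemma without proof as a standard fact, and your argument is precisely the standard derivation it implicitly relies on.
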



\section{The geometry of the spatial Schwarzschild metric} \label{sec:geometrySchwarzschild}

Consider the $n$-dimensional spatial Schwarzschild Riemannian manifold of mass $m>0$, $$\left(\R^n \setminus \{0\}, g_m := \left(1 + \frac{m}{2 r^{n-2}}\right)^{\frac{4}{n-2}} \sum_{i=1}^n d x_i^2\right),$$ where $r = |x|$. Given $r>0$, we will denote the centered coordinate sphere $\{ x \in \R^n : |x| = r\}$ in this coordinate system by $S_r$. The sphere $S_{r_h}$ with $r_h= \left(\frac{m}{2}\right)^{1/(n-2)}$ is called the horizon. We record the following properties of this geometry; our sign conventions here are those of Appendix \ref{sec:standard}. 
\begin{enumerate} [(a)]
\item The inversion $x \to r_h^2\frac{x}{|x|^2}$ induces a reflection symmetry of $g_m$ across the horizon. 
\item The $g_m$-area of $S_r$ is $\phi_m^{\frac{2(n-1)}{n-2}} r^{n-1} \omega_{n-1}$. 
\item The $g_m$-mean curvature with respect to the unit normal in direction of $\partial_r$ of $S_r$ equals $\phi_m^{-n/(n-2)}(1-\frac{m}{2r^{n-2}}) \frac{n-1}{r}$. The horizon $S_{r_h}$ is a minimal surface, and the mean curvature of the spheres $S_r$ for $r > r_h$ is positive.
\item The conformal factor $\phi_m := 1 + \frac{m}{2r^{n-2}}$ is harmonic with respect to the Euclidean metric $\sum_{i=1}^n d x_i^2$. The scalar and the Weyl curvature of $g_m$ vanish.
\item $$ \Ric_{g_m} = \frac{(n-2) m} {r^n \phi_m^{2n/(n-2)}} \left(g_m - n \phi_m^{4/(n-2)} dr \otimes dr \right)$$
\item $$\Riem_{g_m} = \frac{m} {r^n \phi_m^{2n/(n-2)}} \left( g_m \odot g_m - n \phi_m^{4/(n-2)} (dr \otimes dr) \odot g_m \right)$$
\end{enumerate}


\section{Regularity of isoperimetric regions and the behavior of minimizing sequences} \label{sec:regularity}

The regularity of isoperimetric regions in complete Riemannian manifolds is that of area minimizing boundaries (see \cite{Morgan:2003, Ritore-Rosales:2004, Ros:2005} and the references therein): 
\begin{proposition} Let $\Omega$ be an isoperimetric region in $(M, g)$. Its reduced boundary $\partial^*\Omega$ is a smooth hypersurface away from a singular set of Hausdorff dimension $\leq n-8$.
\end{proposition}

The following technical lemma, which is needed to check the hypotheses of Theorem \ref{thm:effective_volume_comparison}, follows from explicit comparison:
\begin{lemma} [\protect{Cf. \cite[Lemma 4.3]{Eichmair-Metzger:2010}}] \label{lem:quadraticareagrowthisoperimetric}
Let $(M, g)$ be an initial data set. There exists a constant $\Theta > 0$ so that for every isoperimetric region $\Omega$ with $\CL^n_g(\Omega) \geq 1$ one has that $\H^{n-1}_g(\partial \Omega \cap B_r) \leq \Theta r^{n-1}$ for all $r \geq 1$, and that $\H^{n-1}_g (\partial \Omega)^{\frac{1}{n-1}} \CL^n_g(\Omega)^{- \frac{1}{n}} \leq \Theta$. 
\end{lemma}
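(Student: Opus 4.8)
The plan is to derive both estimates from elementary comparison arguments: one places auxiliary balls deep in the asymptotic end of $(M,g)$ and exploits that $\Omega$ realizes $A_g(V)$, where $V := \CL^n_g(\Omega) \ge 1$.

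For the ratio estimate, I would first fix a radius $R_U \ge 1$ large enough that $M \setminus B_{R_U}$ lies in the coordinate chart and that the bound $r|g_{ij}-\delta_{ij}| \le C$ (for $r \ge 1$) forces, for every coordinate ball $B_\rho(p) \subset M \setminus B_{R_U}$, the estimates $\H^{n-1}_g(\partial B_\rho(p)) \le c \rho^{n-1}$ and $\CL^n_g(B_\rho(p)) \ge c' \rho^n$ with $c, c'$ depending only on $n$. Given $V \ge 1$, one finds such a ball with $\CL^n_g(B_\rho(p)) = V$ (choose $p$ far out in the end and let $\rho$ grow). Since this ball is an admissible competitor, $\H^{n-1}_g(\partial^*\Omega) = A_g(V) \le c \rho^{n-1} \le c_n V^{\frac{n-1}{n}}$, which is exactly the asserted bound $\H^{n-1}_g(\partial^*\Omega)^{\frac{1}{n-1}} \CL^n_g(\Omega)^{-\frac1n} \le c_n^{\frac{1}{n-1}}$.

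For the area-growth estimate, fix $r \ge 1$ and consider $\Omega_1 := \Omega \setminus B_r$. Since $r(U) \subset [0,1)$, the level set $S_r$ lies in the asymptotic region, so $\H^{n-1}_g(S_r) \le c r^{n-1}$ and $v := \CL^n_g(\Omega \cap B_r) \le \CL^n_g(B_r) \le c r^n$. For a.e. $r$ one has the perimeter decomposition $\H^{n-1}_g(\partial^*\Omega_1) \le \H^{n-1}_g(\partial^*\Omega) - \H^{n-1}_g(\partial^*\Omega \cap B_r) + \H^{n-1}_g(S_r)$. Then I would re-add the volume $v$ lost in the cut using a coordinate ball $B_s(p)$ deep in the end, disjoint from $\Omega_1 \cup B_r$ (possible because $s \le c r$ is controlled while $|p|$ is free) and with $\CL^n_g(B_s(p)) = v$. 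The set $\Omega_1 \cup B_s(p)$ then has volume exactly $V$, so it is a competitor for $A_g(V) = \H^{n-1}_g(\partial^*\Omega)$; comparing against it and cancelling $\H^{n-1}_g(\partial^*\Omega)$ yields $\H^{n-1}_g(\partial^*\Omega \cap B_r) \le \H^{n-1}_g(S_r) + \H^{n-1}_g(\partial B_s(p)) \le c r^{n-1} + c v^{\frac{n-1}{n}} \le c r^{n-1}$. Passing from a.e. $r$ to all $r \ge 1$ by monotonicity of $r \mapsto \H^{n-1}_g(\partial^*\Omega \cap B_r)$ and taking $\Theta$ to be the larger of the two constants completes the argument.

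I expect no genuine obstacle here; the content is purely a comparison argument, and the only delicate points are measure-theoretic bookkeeping: justifying the perimeter inequality $\H^{n-1}_g(\partial^*(\Omega \setminus B_r)) \le \H^{n-1}_g(\partial^*\Omega \setminus B_r) + \H^{n-1}_g(S_r)$ for a.e. $r$ (so that $S_r \cap \partial^*\Omega$ is $\H^{n-1}$-null), verifying that $\Omega_1 \cup B_s(p)$ is a legitimate finite-perimeter competitor of the correct volume, and recording that the Euclidean-type area and volume estimates for balls in the end hold with constants depending only on $n$, which is immediate from the $\C^0$-decay hypothesis $r|g_{ij}-\delta_{ij}| \le C$ for $r \ge 1$.
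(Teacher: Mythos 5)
Your argument is correct and is exactly the ``explicit comparison'' the paper has in mind (the paper defers to \cite[Lemma 4.3]{Eichmair-Metzger:2010}, whose proof is precisely this cut-and-paste scheme: compare with a far-out coordinate ball of the same volume for the ratio bound, and excise $\Omega\cap B_r$ while restoring the lost volume by a distant ball for the area-growth bound). The measure-theoretic points you flag — the a.e.-$r$ perimeter decomposition, the admissibility of the modified competitor, and the Euclidean-type ball estimates in the end — are all routine and resolve as you indicate.
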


The following proposition characterizes the behavior of minimizing sequences for the isoperimetric problem (\ref{eqn:isoperimetric_area_function}) in initial data sets. It is a slight refinement of \cite[Theorem 2.1]{Ritore-Rosales:2004}: 
\begin{proposition} [Cf. \protect{\cite[Proposition 4.2]{Eichmair-Metzger:2010}}]\label{prop:cut_and_paste_for_minimizing_sequence} 
Given $V > 0$ there exists an isoperimetric region $\Omega \subset  M$ -- which may be empty --  and a sequence of coordinate balls $B(p_i, r_i)$ with $p_i \to \infty$ and $0 \leq r_i \to r \in [0, \infty)$ as $i \to \infty$ such that $\CL^n_g(\Omega) + \CL^n_g(B(p_i, r_i)) = V$ and such that $\H^{n-1}_g(\partial \Omega) + \H^{n-1}_g(\partial B(p_i, r_i)) \to A_g(V)$. If $r>0$ and $\CL^n_g(\Omega) >0$, then the mean curvature of $\partial \Omega$ equals $\frac{n-1}{r}$. 
\end{proposition}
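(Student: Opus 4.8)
The final statement to prove is Proposition \ref{prop:cut_and_paste_for_minimizing_sequence}, which characterizes the behavior of minimizing sequences for the isoperimetric problem.

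\textbf{Approach.} The plan is to run the standard concentration-compactness dichotomy for minimizing sequences of the isoperimetric problem, following \cite[Theorem 2.1]{Ritore-Rosales:2004} and the refinement in \cite[Proposition 4.2]{Eichmair-Metzger:2010}, and then identify the ``lost at infinity'' part explicitly using the asymptotic flatness of $(M,g)$. First I would fix $V>0$ and take a minimizing sequence $\Omega_j$ of sets of finite perimeter with $\CL^n_g(\Omega_j) = V$ and $\H^{n-1}_g(\partial^*\Omega_j) \to A_g(V)$. By the crude isoperimetric inequality (Lemma \ref{lem:crudeisoperimetricinequality}) the perimeters are bounded, so after passing to a subsequence the indicator functions $\chi_{\Omega_j}$ converge in $L^1_{loc}$ and weakly-$*$ as $BV$ functions to some $\chi_\Omega$ with $\Omega$ a set of finite perimeter, by $BV$ compactness. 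Lower semicontinuity of perimeter gives $\H^{n-1}_g(\partial^*\Omega) \leq \liminf \H^{n-1}_g(\partial^*\Omega_j \cap K)$ over compact $K$. The issue is that volume and perimeter may escape to infinity: write $V_\infty := V - \CL^n_g(\Omega) \geq 0$ for the volume lost.

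\textbf{Key steps.} (1) If $V_\infty = 0$, then $\Omega$ itself is isoperimetric (perimeter cannot have been lost either, by the isoperimetric inequality applied to the piece at infinity, or more carefully by a standard argument showing no perimeter escapes once no volume escapes), and we take $B(p_i,r_i)$ with $r_i \to 0$; nothing more to prove. (2) If $V_\infty > 0$, I would analyze the lost volume: choose radii $\rho_j \to \infty$ slowly enough that $\CL^n_g(\Omega_j \setminus B_{\rho_j}) \to V_\infty$, and consider the restricted sets $\Omega_j \cap (M \setminus B_{\rho_j})$, which live deep in the asymptotically flat end where the metric is $C^0$-close to Euclidean after rescaling. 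The decomposition lemma of \cite{Ritore-Rosales:2004} (or \cite[Lemma V.4.2]{...}, but I will just cite the former) shows this diverging part, after translation, converges to an isoperimetric region of Euclidean space $\R^n$ enclosing volume $V_\infty$, i.e. to a round ball; this is where one uses that the end is asymptotic to Euclidean space and that a competitor argument forbids the diverging piece from being anything worse than the Euclidean-optimal ball. Concretely one produces the coordinate balls $B(p_i,r_i)$ with $|p_i| \to \infty$ and $\CL^n_g(B(p_i,r_i)) \to V_\infty$, hence $r_i \to r \in (0,\infty)$ with $\tfrac{\omega_{n-1}}{n} r^n = V_\infty$ (up to the metric correction, which vanishes in the limit since $g \to \delta$), and with $\H^{n-1}_g(\partial B(p_i,r_i)) \to$ the Euclidean area of a ball of volume $V_\infty$. (3) Then I would verify the additivity: $\CL^n_g(\Omega) + \CL^n_g(B(p_i,r_i)) \to V$ by construction, and $\H^{n-1}_g(\partial^*\Omega) + \H^{n-1}_g(\partial B(p_i,r_i)) \leq A_g(V)$ by lower semicontinuity plus the no-interaction-at-infinity splitting, while $\geq A_g(V)$ follows because $\Omega \cup B(p_i,r_i)$ (disjoint for $i$ large) is an admissible competitor for a volume tending to $V$, combined with continuity of $A_g$; a small adjustment of $r_i$ makes the volumes land exactly on $V$. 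Replacing ``$\to$'' statements by exact equalities for the final sequence is handled by this adjustment, so we can demand $\CL^n_g(\Omega) + \CL^n_g(B(p_i,r_i)) = V$ on the nose.

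\textbf{The mean curvature statement.} If $r>0$ and $\CL^n_g(\Omega)>0$, then $\Omega$ is a genuine isoperimetric region for its own volume $V - \CL^n_g(B_r) $ \emph{within the full variational problem}, but more is true: the pair $(\Omega, \text{ball at infinity})$ is jointly minimizing, so first variation with respect to a volume transfer between $\Omega$ and the diverging Euclidean ball must vanish. The Euclidean ball of radius $r$ has mean curvature $\tfrac{n-1}{r}$, and by the Lagrange-multiplier characterization of constant mean curvature for isoperimetric boundaries, the mean curvature of $\partial\Omega$ must equal that common Lagrange multiplier, namely $\tfrac{n-1}{r}$. I would phrase this as: deforming $\Omega$ to add volume $\eps$ while removing volume $\eps$ from $B(p_i,r_i)$ (possible for $i$ large since they are disjoint and far apart) is admissible and the derivative of total perimeter at $\eps = 0$ is $H_{\partial\Omega} - \tfrac{n-1}{r}$ up to sign, which must vanish by minimality, in both directions. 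This requires $\partial\Omega$ to be smooth enough to speak of its mean curvature, which holds away from a set of codimension $\geq 8$ by the regularity proposition quoted in Appendix \ref{sec:regularity}, and the first-variation identity holds distributionally everywhere.

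\textbf{Main obstacle.} The delicate point is the rigidity of the diverging part: showing that the escaping volume is captured \emph{exactly} by round coordinate balls with the stated area and radius limits, rather than, say, a connected piece that is partly finite and partly at infinity, or a diverging piece whose shape degenerates. This is where one genuinely needs the asymptotic flatness (so that rescaled limits of the diverging pieces solve the \emph{Euclidean} isoperimetric problem, forcing balls by the equality case of the Euclidean isoperimetric inequality) together with a careful bookkeeping of volume and perimeter splitting — essentially the content of \cite[Theorem 2.1]{Ritore-Rosales:2004} and \cite[Proposition 4.2]{Eichmair-Metzger:2010}, whose proof we are adapting. I expect the write-up to consist mostly of invoking that machinery and then adding the short first-variation argument for the mean curvature conclusion.
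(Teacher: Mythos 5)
Your proposal is correct and follows exactly the route the paper relies on: the paper states this proposition without proof, deferring to the concentration--compactness/cut-and-paste argument of \cite[Theorem 2.1]{Ritore-Rosales:2004} and \cite[Proposition 4.2]{Eichmair-Metzger:2010}, which is precisely the machinery you outline (BV compactness, replacement of the diverging mass by far-out coordinate balls via the Euclidean isoperimetric inequality, exact volume adjustment of $r_i$). Your first-variation argument transferring volume between $\Omega$ and the diverging balls is also the standard and correct way to obtain $H_{\partial\Omega}=\frac{n-1}{r}$.
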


The following lemma is standard, cf. \cite[Lemma 2.4]{Eichmair-Metzger:2010}. 
\begin{lemma} \label{lem:crudeisoperimetricinequality}
  Let $(M, g)$ be an initial data set. There exists a constant $C>0$ depending only on
  $( M,  g)$ such that
  \begin{equation*}
    \left( \int_{ M}
      |f|^{\frac{n}{n-1}} d \CL^n_{ g}\right)^{\frac{n-1}{n}} \leq C
    \int_{ M} |d f|_g d \CL^n_{ g} \text{ for all } f \in
    \C_c^1(M).
  \end{equation*}
  For any bounded Borel set $\Omega \subset  M$ with finite perimeter one
  has that
  \begin{equation*}
    \CL^n_{ g} (\Omega)^{\frac{n-1}{n}} \leq C \H^{n-1}_{
      g}(\partial^* \Omega).
  \end{equation*}
\end{lemma}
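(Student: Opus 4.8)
The final statement is Lemma \ref{lem:crudeisoperimetricinequality}, which asserts a Sobolev inequality on an initial data set $(M,g)$ together with its isoperimetric corollary.

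\textbf{Plan of proof.} The plan is to reduce the Sobolev inequality on $(M,g)$ to the classical Sobolev (Gagliardo--Nirenberg) inequality on $\R^n$ by exploiting that $(M,g)$ is uniformly equivalent to a flat end glued to a compact piece. First I would recall the structure from Definition \ref{def:initial_data_sets}: there is a bounded open set $U$ with $M \setminus U \cong_x \R^n \setminus B_{1/2}(0)$, and in these coordinates $r|g_{ij} - \delta_{ij}| \le C$ for $r \ge 1$. In particular, on all of $M$ the metric $g$ and some fixed smooth reference metric $g_0$ (which agrees with the Euclidean metric $\sum dx_i^2$ on $M \setminus U$, say for $r \ge 2$, interpolated smoothly inside) are uniformly equivalent as quadratic forms: there is $\Lambda > 1$ with $\Lambda^{-1} g_0 \le g \le \Lambda g_0$. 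This uses only the $\C^0$ decay $r|g_{ij}-\delta_{ij}| \le C$, which forces $|g_{ij} - \delta_{ij}| \le C$ for $r \ge 1$, hence uniform two-sided bounds on the end, and compactness of $\overline U$ handles the interior. Uniform equivalence of metrics gives comparability of the volume measures $d\CL^n_g \simeq d\CL^n_{g_0}$ (with constants $\Lambda^{n/2}$) and of the gradient norms $|df|_g \simeq |df|_{g_0}$, so it suffices to prove the inequality for the fixed metric $g_0$.

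\textbf{Key steps.} For the reference metric $g_0$, which is exactly Euclidean outside a compact set, I would argue as follows. Cover $(M, g_0)$ by two pieces: the Euclidean end $E = \{r \ge 2\} \cong \R^n \setminus B_2(0)$, and a relatively compact neighborhood $W$ of $\overline U \cup \{r \le 3\}$. On $E$, the classical Sobolev inequality on $\R^n$ applies (extending $f$ by zero), giving $\|f\|_{L^{n/(n-1)}(E)} \le C_n \|df\|_{L^1(E')}$ for functions supported in $E$ — with a small fattening $E'$ because cutting off $f$ at the boundary $\{r=2\}$ costs a term controlled by $\int_{\{2 \le r \le 3\}} |f|$, which is in turn controlled by $\|df\|_{L^1}$ plus a lower-order term handled by a Poincaré-type inequality on the annulus $\{2 \le r \le 3\}$. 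On the relatively compact manifold-with-end region this is the standard local Sobolev inequality with a Neumann-type Poincaré correction. I would then patch the two local inequalities together with a partition of unity $\{\chi_E, \chi_W\}$ subordinate to $\{E', W'\}$: writing $f = \chi_E f + \chi_W f$, applying the local inequalities to each piece, and absorbing the $\|df\|_{L^1}$ contributions from $|d\chi|$ bounded on the overlap; the overlap terms $\int_{\text{overlap}} |f|$ get absorbed using the compactness of the overlap region. This is the standard ``Sobolev inequality on a manifold with one Euclidean end'' and is entirely routine once uniform metric equivalence is in hand. Finally, to deduce the isoperimetric corollary: given a bounded Borel set $\Omega$ of finite perimeter, approximate $\chi_\Omega$ in $BV$ by smooth compactly supported functions $f_k$ with $\int |df_k| \to \H^{n-1}_g(\partial^*\Omega)$ (De Giorgi mollification / standard $BV$ density), apply the Sobolev inequality to each $f_k$, and pass to the limit using $\|f_k\|_{L^{n/(n-1)}} \to \CL^n_g(\Omega)^{(n-1)/n}$.

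\textbf{Main obstacle.} Honestly, there is no deep obstacle here: the lemma is explicitly labelled ``standard'' in the text, and the whole content is bookkeeping. The one point requiring a little care — and the only place where an estimate is actually \emph{used} — is establishing the uniform two-sided bound $\Lambda^{-1}g_0 \le g \le \Lambda g_0$ globally on $M$ from the asymptotic hypothesis, and then tracking that all constants depend only on $(M,g)$ (equivalently on $C$, $n$, and the geometry of the compact core), as claimed. The patching step is where most of the writing would go, but it follows a completely standard template (e.g. as in the references \cite{Morgan:2003, Ritore-Rosales:2004}), so in the actual paper it is reasonable to simply cite \cite[Lemma 2.4]{Eichmair-Metzger:2010} as the source, which is exactly what the excerpt does.
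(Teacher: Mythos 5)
Your proposal is correct and is exactly the standard argument that the paper outsources to the citation \cite[Lemma 2.4]{Eichmair-Metzger:2010}: uniform equivalence of $g$ with a reference metric that is Euclidean outside a compact set, patching the Euclidean Sobolev inequality on the end with a local Sobolev--Poincar\'e inequality on the compact core via a partition of unity, and then BV approximation of $\chi_\Omega$ for the isoperimetric corollary. The only cosmetic point worth tightening is that $|g_{ij}-\delta_{ij}|\leq C$ alone does not give a positive lower bound on $g$ when $C$ is large; one should use the decay $|g_{ij}-\delta_{ij}|\leq C/r$ to get uniform equivalence for $r$ large and compactness for the remaining bounded region.
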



\section{An alternative expression for the center of mass}
The following lemma is an extension of \cite[Lemma 2.1]{Huang:2010}. Rather than applying a density theorem as in \cite{Huang:2010}, our proof below relies on an elementary integration by parts, cf. the papers \cite{Ma:2010, Ma:2012}  by S. Ma. 

\begin{lemma} [\protect{Cf. \cite[Lemma 2.1]{Huang:2010}}]
  \label{lemma:alternative_center}
  Let $g_{ij}$ be a metric on $\R^n$ that is $\C^2$-asymptotic to
  Schwarzschild of mass $m>0$ and asymptotically even at rate
  $\gamma \in (0, 1]$. Then for all $c>0$ and $\gamma_1 \in(0,1]$ there
  exists $c'>0$ such that for all $p\in\R^n$ with
  $|p| \leq c r^{1-\gamma_1}$ and $r \geq 1$ we have that
  \begin{multline*}
    \left|
      \int_{S_r(p)} (x_l-p_l) \big(H^{S_r(p)} - \frac{n-1}{r}\big)
      d\H_\delta^{n-1}
      - m(n-1) \omega_{n-1}(p_l - \C_l) 
    \right|
    \\
    \leq c' (r^{1-\gamma - \gamma_1} + r^{- \min\{\gamma,\gamma_1\}}).
  \end{multline*}
  Here, $H^{S_r(p)}$ denotes the mean curvature of $S_r(p)$ with
  respect to $g$.
\begin{proof}
  Throughout the proof we will sum over repeated indices. Let $h_{ij}
  := g_{ij} - \delta_{ij}$ and let $\rho = \frac{x-p}{|x-p|}$. Then
  \begin{equation}
    \label{eq:H_expansion}
    H^{S_r(p)} - \tfrac{n-1}{r}
    =
    \frac12 h_{ij,k} \rho_i \rho_j \rho_k
    + \frac12 h_{ii,j} \rho_j
    - h_{ij,i} \rho_j
    + \frac{n+1}{2} \frac{h_{ij}}{r} \rho_i \rho_j
    -\frac{h_{ii}}{r} 
    + E
  \end{equation}
  where $E$ is an error term with $|E(x)| \leq c' |x-p|^{3-2n}$, uniformly for $p$
  such that $2 |p| \leq r$ when $r$ is large. This follows from a
  calculation exactly as in the case $n=3$, cf. \cite[Lemma
  2.1]{Huang:2010}. Moreover, we have that
  \begin{eqnarray} \label{eqn:Eeven}
    |E(x) - E(2p-x)| \leq  c' |x-p|^{3-2n-\min\{\gamma,\gamma_1\}} \\ \nonumber \text{ for all } |p|  \leq c |x|^{1-\gamma_1} \text{ and } |x- p| \geq 1. 
  \end{eqnarray}
  We claim that for each $l \in \{1,\dots,n\}$,
  \begin{multline}
    \label{eq:integral_identity}
    \frac{1}{2} \int_{S_r(p)} (x_l - p_l) h_{ij,k} \rho_i
    \rho_j \rho_k \dmu_\delta
    \\
    =
    \frac12 \int_{S_r(p)}
    h_{il}\rho_i
      + (x_l - p_l) \left(
        \frac{h_{ii}}{r}
      - (n+1) \frac{h_{ij}}{r} \rho_i\rho_j
      + h_{ij,j}\rho_i \right)
    \dmu_\delta.
  \end{multline}
  To see this, define the vector field $X_{(l)} := (x_l -p_l) h_{ij}
  \rho_i \del_j$ and note that
  \begin{equation*}
    \int_{S_r(p)} \operatorname{div}^{S_r(p)}_\delta X_{(l)} \dmu_\delta
    =
    \int_{S_r(p)} H^{S_r(p)}_\delta \delta( X, \rho) \dmu_\delta.
  \end{equation*}
  Using that $H^{S_r(p)}_\delta = \frac{n-1}{r}$ and that
  \begin{equation*}
    \begin{split}
      \operatorname{div}^{S_r(p)}_\delta X_{(l)}
      &= 
      (\delta_{jk} - \rho_j\rho_k) \del_k X_{(l)}^j
      \\
      &=
      h_{i l}\rho_i
      + (x_l - p_l) \left(
      \frac{h_{ii}}{r}
      - 2\frac{h_{ij}}{r} \rho_i\rho_j
      + h_{ij,j}\rho_i
      - h_{ij,k}\rho_i \rho_j \rho_k \right)
    \end{split}
  \end{equation*}
  we obtain (\ref{eq:integral_identity}). Multiply
  \eqref{eq:H_expansion} by $(x_l - p_l)$ and integrate over
  $S_r(p)$. Using \eqref{eq:integral_identity} we arrive at
  \begin{multline} \label{eqn:Eintegral}
      \int_{S_r(p)} (x_l - p_l) (H^{S_r(p)} - \tfrac{n-1}{r} )
    \dmu_\delta
    \\
    =
    \frac12 \int_{S_r(p)} (x_l - p_l)(h_{ii,j} -
    h_{ij,i})\rho_j
    + (h_{i l} \rho_i - h_{ii} \rho_l)
    \dmu_\delta
    +
    \int_{S_r(p)} (x_l - p_l) E \dmu_\delta.
  \end{multline}
  Using \eqref{eqn:Eeven} we see that the last term has
  order $O(r^{3-n-\min\{\gamma,\gamma_1\}})$. 
  To analyze the first term, let
  \begin{equation*}
    Y_{(l)}
    :=
    (x_l( g_{ij,i} - g_{ii,j}) - (g_{jl} - g_{ii}\delta_{lj}))\partial_j
  \end{equation*}
  so that
  \begin{equation*}
    \C_l = \frac{1}{2 m (n-1) \omega_{n-1}} \lim_{R\to\infty}
    \int_{S_R(0)} Y_{(l)}^j \frac{x_j}{R} d\H_\delta^{n-1}. 
  \end{equation*}
  Note that $\div_\delta Y_{(l)} = x^l (\Scal_g + \partial g * \partial g)$. It follows that $| \div_\delta Y_{(l)} | \leq c r^{1-n-\gamma}$ and $|\div_\delta Y_{(l)} (x) + \div_\delta Y_{(l)} (-x) | \leq c r^{-n-\gamma}$.
  
  Let $R > r + |p|$. Then 
    \begin{eqnarray}
    && \int_{B_R(0) \setminus B_r(p)} \div_\delta Y_{(l)} d\CL^n_\delta - \int_{B_r(p) \setminus B_r(-p)} \div_\delta Y_{(l)} d \CL^n_\delta \\
    &=& \int_{B_R(0) \setminus (B_r(p) \cap B_r(-p))} \div_\delta Y_{(l)} d \CL^n_\delta  \nonumber  \\ 
    &=& \frac{1}{2} \int_{B_R(0) \setminus (B_r(p) \cap B_r(-p))} \div_\delta Y_{(l)} (x) + \div_\delta Y_{(l)}(-x) d \CL^n_\delta (x). \nonumber
    \end{eqnarray}
    Hence 
    \begin{eqnarray}
  && \left| \int_{B_R(0) \setminus B_r(p)} \div_\delta Y_{(l)} d\CL^n_\delta \right| \\ &\leq& c' \int_{B_R (0) \setminus B_\frac{r}{2} (0)} r^{-n-\gamma} d \CL^n_\delta +  \left| \int_{B_r(p) \setminus B_r(-p)} \div_\delta Y_{(l)} d \CL^n_\delta \right| \nonumber \\ &\leq& 
  c' ( r^{-\gamma} \nonumber + r^{1-\gamma-\gamma_1})
     \end{eqnarray}
     where we have used the estimate  for the odd part of $\div_\delta Y_{(l)}$ in the first inequality, and the estimate for $\div_\delta Y_{(l)}$ and that $\CL^n_\delta (B_r(p) \setminus B_r(-p)) \leq c' r^{n-\gamma_1}$ in the second inequality. We emphasize that the right hand side is independent of $R$. Using the divergence theorem, we have that   \begin{multline}
    \label{eq:hcalc_divergence_identity}
    \int_{B_R(0) \setminus B_r(p)} \div_\delta Y_{(l)} d\CL^n_\delta
    \\=
    \int_{S_R(0)} Y_{(l)}^j \frac{x_j}{R} d\H^{n-1}_\delta
    -
    \int_{S_r(p)} x_l(h_{ij,i} - h_{ii,j})\rho_j + (h_{ii} \rho_l -
    h_{il}\rho_i) d\H^{n-1}_\delta.
  \end{multline}
Letting
  $R\to\infty$ in \eqref{eq:hcalc_divergence_identity} we obtain
  \begin{multline}
    \label{eq:hcalc_xterm}
    \left| \int_{S_r(p)} x_l(h_{ij,i} - h_{ii,j})\rho_j + (h_{ii} \rho_l -
      h_{il}\rho_i) d\H^{n-1}_\delta - 2 m (n-1) \omega_{n-1} \C_l
    \right|
    \\
    \leq
    c' (r^{1-\gamma -\gamma_1} + r^{-\gamma}).
  \end{multline}
  We claim that 
  \begin{equation}
    \label{eq:hcalc_pterm}
    \left| \int_{S_r(p)} p_l(h_{ij,i} - h_{ii,j})\rho_j
      d\H^{n-1}_\delta 
      - 2 m (n-1) \omega_{n-1} p_l
    \right|
    \leq
    Cc' r^{1 - \gamma - \gamma_1}.
  \end{equation}
 To see this, define the vector field $Y := (h_{ij,i} - h_{ii,j}) \partial_j$, note that $\div_\delta Y = R_g + \partial g * \partial g = O(r^{-n-\gamma})$, and that 
   \begin{equation*}
    \left| \int_{S_r(p)} (h_{ij,i} - h_{ii,j})\rho_j
      d\H^{n-1}_\delta 
      - 2 m (n-1) \omega_{n-1}
    \right|
    \leq
    \left|\int_{\R^n \setminus B_r(p)} \div_\delta Y d \CL^n_\delta \right| \leq c' r^{-\gamma}.
  \end{equation*}
 The lemma follows combining \eqref{eqn:Eintegral}, \eqref{eq:hcalc_xterm} and \eqref{eq:hcalc_pterm}. 
\end{proof}
\end{lemma}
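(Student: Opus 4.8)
The plan is to expand the Euclidean mean curvature of $S_r(p)$ as a sum of linear-in-$h$ terms plus a cubic error, multiply by $(x_l - p_l)$, integrate, and massage the resulting integrand by divergence-theorem tricks until it matches the surface integral defining $\C_l$. First I would establish the pointwise expansion \eqref{eq:H_expansion}: writing $h_{ij} = g_{ij} - \delta_{ij}$ and $\rho = (x-p)/|x-p|$, a direct computation of $H^{S_r(p)} = \operatorname{div}^{S_r(p)}_g \rho$ — to be compared with the Euclidean value $\tfrac{n-1}{r}$ — gives the displayed linear terms in $\partial h$ and $h/r$, with remainder $E$ satisfying $|E(x)| \leq c'|x-p|^{3-2n}$ uniformly for $2|p| \leq r$. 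This is the same computation as in \cite[Lemma 2.1]{Huang:2010}. The new ingredient is the parity estimate \eqref{eqn:Eeven} for $E(x) - E(2p-x)$: because $g$ is asymptotically even, the odd part of $\partial g \ast \partial g$ and of the explicit Schwarzschild-subtracted terms carries an extra decay $r^{-\min\{\gamma,\gamma_1\}}$ once one accounts for the displacement $p$; I would get this by Taylor-expanding in $p$ and using $|p| \leq c|x|^{1-\gamma_1}$.

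Next I would derive the purely algebraic identity \eqref{eq:integral_identity} that converts the cubic-looking term $\int (x_l-p_l) h_{ij,k}\rho_i\rho_j\rho_k$ into terms with at most one derivative on $h$. This is done by applying the tangential divergence theorem on $S_r(p)$ to the vector field $X_{(l)} := (x_l - p_l) h_{ij}\rho_i \partial_j$: the surface divergence $(\delta_{jk} - \rho_j\rho_k)\partial_k X^j_{(l)}$ produces exactly $h_{il}\rho_i$, a multiple of $(x_l-p_l)$ times $h_{ii}/r$, $h_{ij}\rho_i\rho_j/r$, $h_{ij,j}\rho_i$, and (crucially) the cubic term with a computable coefficient, while $\int H^{S_r(p)}_\delta \delta(X_{(l)},\rho) = \tfrac{n-1}{r}\int(x_l-p_l)h_{ij}\rho_i\rho_j$ closes the identity. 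Substituting \eqref{eq:integral_identity} back into $\int (x_l-p_l)(H^{S_r(p)} - \tfrac{n-1}{r})$ collapses all the $h/r$ and cubic contributions, leaving \eqref{eqn:Eintegral}: the integrand is $\tfrac12(x_l-p_l)(h_{ii,j} - h_{ij,i})\rho_j + \tfrac12(h_{il}\rho_i - h_{ii}\rho_l)$ plus the $E$-term, and the $E$-term is $O(r^{3-n-\min\{\gamma,\gamma_1\}})$ by \eqref{eqn:Eeven} applied with the odd/even splitting over antipodal points of $S_r(p)$ centered at $p$.

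It then remains to identify $\int_{S_r(p)}\big[(x_l-p_l)(h_{ii,j}-h_{ij,i})\rho_j + h_{il}\rho_i - h_{ii}\rho_l\big]$ with $2m(n-1)\omega_{n-1}(\C_l - p_l)$ up to the claimed errors. I would split off the $p_l$-part and handle the $x_l$-part separately. For the $x_l$-part, introduce $Y_{(l)} := \big(x_l(g_{ij,i}-g_{ii,j}) - (g_{jl} - g_{ii}\delta_{lj})\big)\partial_j$, whose flux integral over $S_R(0)$ defines $2m(n-1)\omega_{n-1}\C_l$ in the limit $R\to\infty$, and whose Euclidean divergence is $x_l(R_g + \partial g \ast \partial g) = O(r^{1-n-\gamma})$ with odd part $O(r^{-n-\gamma})$ (this uses asymptotic evenness of the scalar curvature). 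Applying the divergence theorem on $B_R(0)\setminus B_r(p)$ and letting $R\to\infty$ relates the flux at infinity to the flux across $S_r(p)$ up to $\int_{B_R(0)\setminus B_r(p)}\operatorname{div}_\delta Y_{(l)}$; to bound this last integral \emph{uniformly in $R$} I would symmetrize over $x\mapsto -x$, so that the genuinely $R$-dependent outer region only sees the odd part $O(r^{-n-\gamma})$ (integrable, giving $O(r^{-\gamma})$), while the symmetric difference $B_r(p)\setminus B_r(-p)$ has Euclidean volume $\leq c'r^{n-\gamma_1}$ and contributes $O(r^{1-\gamma-\gamma_1})$. The $p_l$-part, \eqref{eq:hcalc_pterm}, is simpler: with $Y := (h_{ij,i}-h_{ii,j})\partial_j$ and $\operatorname{div}_\delta Y = O(r^{-n-\gamma})$, the flux of $Y$ across $S_r(p)$ equals $2m(n-1)\omega_{n-1}$ up to $O(r^{-\gamma})$, and the extra factor $|p_l| \leq c r^{1-\gamma_1}$ yields the error $O(r^{1-\gamma-\gamma_1})$.

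\textbf{Main obstacle.} The delicate point is obtaining the \emph{$R$-uniform} bound on $\int_{B_R(0)\setminus B_r(p)}\operatorname{div}_\delta Y_{(l)}$: naively $\operatorname{div}_\delta Y_{(l)} = O(r^{1-n-\gamma})$ is not integrable to infinity, so a brute-force estimate fails and the limit defining $\C_l$ would not converge. The resolution — and the technical heart of the lemma — is the antipodal symmetrization, which trades the $O(r^{1-n-\gamma})$ full divergence for the $O(r^{-n-\gamma})$ odd part on the unbounded region at the cost of a controlled contribution from the off-center lens $B_r(p)\setminus B_r(-p)$; getting the parity decay rates right for both $E$ and $\operatorname{div}_\delta Y_{(l)}$, with the correct dependence on $\min\{\gamma,\gamma_1\}$ and on $|p|\leq cr^{1-\gamma_1}$, is where all the care goes. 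This is exactly the elementary-integration-by-parts substitute for the density theorem of \cite{Huang:2010} advertised before the statement.
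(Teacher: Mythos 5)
Your proposal is correct and follows essentially the same route as the paper's proof: the expansion of $H^{S_r(p)}-\tfrac{n-1}{r}$ with cubic error $E$, the tangential divergence identity for $X_{(l)}$ to eliminate the $h_{ij,k}\rho_i\rho_j\rho_k$ term, the splitting into the $x_l$- and $p_l$-parts via the vector fields $Y_{(l)}$ and $Y$, and the antipodal symmetrization over $B_R(0)\setminus(B_r(p)\cap B_r(-p))$ to obtain the $R$-uniform bound — which you correctly identify as the heart of the argument. No gaps.
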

  
  
\section{Appoximation by Spheres}
 
 \begin{lemma}  [\protect{Cf. \cite[Lemma 4.8]{Huang:2010} and \cite[Proposition 2.1]{Huisken-Yau:1996}}] \label{lem:sphere_approximation}
There exist $\delta, c >0$ depending only on $n$ so that the following holds: Let $\Sigma \subset \R^n$ be a closed hypersurface. If for some constant $\bar H >0$ one has that $\sup_{\Sigma} |\tf| + \sup_\Sigma |H - \bar H|\leq \delta \bar H$, then $\Sigma$ is strictly convex, and there exist $r \in (\frac{1}{2} \frac{n-1}{\bar H}, 2 \frac{n-1}{\bar H})$, $p \in \R^n$, and a function $v \in \C^2(S_r(p))$ such that $\Sigma = \{ x + v(x) \frac{x - p}{|x - p|} : x \in S_r(p)\}$ and 
  \begin{equation*}
   \sup_{S_r(p)} r^{-1} |v| + |Dv| + r|D^2 v|  \leq  c r (\sup_\Sigma |\tf| + \sup_\Sigma |H-\bar H|).
  \end{equation*}
\end{lemma}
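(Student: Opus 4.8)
The plan is to normalize the mean curvature, extract strict convexity from the pinching hypothesis, and then read off all three estimates from Blaschke's rolling theorem together with the explicit formula for the second fundamental form of a radial graph. First I would normalize: replacing $\Sigma$ by $\tfrac{\bar H}{n-1}\Sigma$, I may assume $\bar H = n-1$, so the target sphere has radius $r=1$; put $\eps := \sup_\Sigma|\tf| + \sup_\Sigma|H-(n-1)| \le \delta(n-1)$. Writing the second fundamental form as $h_{ij} = \tf_{ij} + \tfrac{H}{n-1}\bar g_{ij}$ (with $\bar g$ the induced metric and conventions as in Appendix \ref{sec:standard}), every principal curvature $\kappa$ of $\Sigma$ satisfies $|\kappa - 1| \le |\tf| + \tfrac1{n-1}|H-(n-1)| \le \eps$. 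Choosing $\delta < \tfrac1{n-1}$, all principal curvatures are positive and within $\eps$ of $1$; by the classical structure theory of compact locally convex hypersurfaces, $\Sigma$ (which we take connected) is embedded and bounds a convex body $K\subset\R^n$. This is already the strict convexity asserted by the lemma.

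Next I would produce a thin annulus. Since the principal curvatures are $\le 1+\eps$, a ball of radius $(1+\eps)^{-1}$ rolls freely inside $K$, so $K$ contains some $B_{(1+\eps)^{-1}}(q)$; since they are $\ge 1-\eps$, the outer rolling theorem gives a ball $B_{(1-\eps)^{-1}}(p)\supset K$. For $\eps$ small these are nearly concentric, $|p-q|\le (1-\eps)^{-1}-(1+\eps)^{-1}\le c\eps$, so $B_{1-c\eps}(p)\subset K\subset B_{1+c\eps}(p)$ and $\Sigma$ lies in the annulus $\{\,|\,|x-p|-1\,|\le c\eps\,\}$. Since $p$ is interior to $K$, $\Sigma$ is the radial graph of some $\rho\colon\S^{n-1}\to(0,\infty)$, and writing $\rho = 1+v$ we have $|v|\le c\eps$. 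Moreover, for each $x\in\Sigma$ the outer ball of radius $(1-\eps)^{-1}$ through $x$ has centre $x-(1-\eps)^{-1}\nu(x)$ and still contains $B_{1-c\eps}(p)$; comparing these nearly concentric balls gives $|\nu(x)-\tfrac{x-p}{|x-p|}|\le c\eps$, i.e. the outward unit normal is radial up to $O(\eps)$. Since the tangential part of $\nu$ is $-W^{-1}\nabla_{\S^{n-1}}\rho$ with $W = (\rho^2 + |\nabla_{\S^{n-1}}\rho|^2)^{1/2}$, this forces $|\nabla_{\S^{n-1}}v|\le c\eps$.

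Finally I would get the second-order bound. For a radial graph over $\S^{n-1}\subset\R^n$, with $\sigma$ the round metric and $h$ the second fundamental form for the outward normal, one has $W\,h_{ij} = -\rho\,(\nabla^2_{\S^{n-1}}\rho)_{ij} + 2\rho_i\rho_j + \rho^2\sigma_{ij}$. By the first step the eigenvalues of $h$ with respect to $\bar g = \rho^2\sigma + \nabla\rho\otimes\nabla\rho$ lie in $[1-\eps,1+\eps]$, and by the second step $\bar g = \sigma + O(\eps)$, $W = 1+O(\eps)$, $\rho_i\rho_j = O(\eps)$; solving the identity for the Hessian of $\rho = 1+v$ yields $(\nabla^2_{\S^{n-1}}v)_{ij} = O(\eps)$, hence $|\nabla^2_{\S^{n-1}}v|\le c\eps$. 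Undoing the rescaling turns $r=1$ into $r = \tfrac{n-1}{\bar H}$ (which lies in the required interval), $v$ into the corresponding function on $S_r(p)$, and the three estimates into $\sup_{S_r(p)} r^{-1}|v| + |Dv| + r|D^2v| \le c\,r\,\eps$, as claimed; the $C^2$-regularity of $v$ is automatic since $\Sigma$ is a $C^2$ hypersurface.

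The main obstacle is that the hypothesis bounds the second fundamental form only in $C^0$ — nothing is assumed on its derivatives — so there is no room to invoke interior elliptic regularity for the graph equation; the $C^2$ bound on $v$ must instead be extracted \emph{algebraically} from the above representation of the (pinched, hence bounded) shape operator, which is why the $C^1$ smallness of $v$ has to be secured first. A secondary subtlety is obtaining the sharp $O(\eps)$ — rather than the $O(\sqrt\eps)$ that a single supporting sphere plus convexity would give — for $\nabla v$: this is exactly where one plays the inscribed Blaschke ball against the circumscribed one, i.e. uses the lower curvature bound as well as the upper one. (Alternatively one may keep only the crude $O(\sqrt\eps)$ bound on $\nabla v$, derive $|\nabla^2 v| = O(\eps)$ as above, and then recover $|\nabla v| = O(\eps)$ by interpolation on $\S^{n-1}$.)
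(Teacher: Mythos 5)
Your argument is correct, and all the constants it produces depend only on $n$, as required. Note, however, that the paper does not actually prove Lemma \ref{lem:sphere_approximation}: it is stated in the appendix with references to \cite[Lemma 4.8]{Huang:2010} and \cite[Proposition 2.1]{Huisken-Yau:1996}, so there is no in-paper proof to compare against. Your route differs from the cited one. Huang's argument exploits strict convexity to parametrize $\Sigma$ by the inverse Gauss map and works with the support function $u$ on $\S^{n-1}$, for which $\nabla^2 u + u\,\sigma$ equals the inverse Weingarten map; the curvature pinching makes this tensor $\eps$-close to $r\sigma$, the degree-$0$ and degree-$1$ spherical harmonics of $u$ determine $r$ and the center $p$, and the remainder is bounded pointwise in $\C^2$, after which one converts to a radial graph. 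You instead get the center and the $\C^0$, $\C^1$ bounds from the inscribed and circumscribed Blaschke balls and then solve the radial-graph identity $W h_{ij} = \rho^2\sigma_{ij} + 2\rho_i\rho_j - \rho\,\rho_{;ij}$ algebraically for the Hessian — which is exactly the right way to get a $\C^2$ bound from a hypothesis that controls the second fundamental form only in $\C^0$, and your closing remark identifies that point correctly. The one ingredient you should source carefully is the rolling/sliding theorem for convex bodies in all dimensions $n\geq 3$ (it is classical, e.g. via the support function as in Schneider's monograph, but its general-dimensional proof is itself usually run through the very support-function identity that the cited reference uses directly); also make explicit that $\Sigma$ is taken connected, since otherwise it cannot be a graph over a single sphere. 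With those two small caveats your proof is complete and is a legitimate, somewhat more geometric alternative to the support-function argument.
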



\section{Overview of results on isoperimetric regions} \label{sec:overview}

Our intention in this section is three-fold: First, to give a complete account of all closed Riemannian manifolds whose isoperimetric regions are fully or largely characterized; second, to describe briefly all techniques and developments in the theory of isoperimetry that appear to us relevant in the context of this paper; and third, to provide the reader with an introduction to the rich literature on this subject.  


\subsection{Monographs and surveys}

R. Osserman's article \cite{Osserman:1978} surveys the classical literature on the isoperimetric problem. We point out in particular the discussion in Section $2$, which highlights the difference between characterizing critical and stable critical surfaces for the isoperimetric problem and establishing a sharp isoperimetric inequality, as well as the discussion in Section $4$ on results and conjectures related to the validity of the planar Euclidean isoperimetric inequality $L^2 - 4\pi A \geq 0$ on Riemannian surfaces with non-positive curvature. R. Osserman's article \cite{Osserman:1979} gives several effective (``Bonnesen-style") isoperimetric inequalities on Riemannian surfaces. The proofs depend on the Gauss-Bonnet theorem and F. Fiala's method of ``interior parallels", cf. the historical discussion in Section II. Section III.C contains some some extensions to higher dimension. The extensive monograph \cite{Burago-Zalgaller:1988} by Y. D. Burago and V. A. Zalgaller emphasizes the rich connection with convex and integral geometry and contains many interesting historical references. The more recent survey articles \cite{Ros:2005} by A. Ros and \cite{Ritore:2010} by M. Ritor\'e contain a wealth of additional material and up-to-date references. 


\subsection {Classical isoperimetric inequality} The sharp isoperimetric inequality in the simply connected constant curvature spaces $\R^n, \mathbb{S}^n$, and $\mathbb{H}^n$ have been established rigorously in all dimensions in a series of papers by E. Schmidt in the 1940's, cf. the Historical Remarks $10.4$ as well as Sections $8$-$10$ in \cite{Burago-Zalgaller:1988}. The isoperimetric regions are exactly the geodesic balls.  

\subsection{The case of surfaces}
The isoperimetric regions of certain rotationally symmetric surfaces have been completely characterized, using curve shortening flow \cite{Benjamini-Cao:1996,Topping:1998}, parallel surfaces techniques \cite{Fiala:1941,Pansu:1998,Topping:1999}, and by analysis of curves of constant geodesic curvature \cite{Morgan-Hutchings-Howards:2000,Ritore:2001A,Canete:2007,Canete-Ritore:2008}. The introduction of the recent article \cite{Canete-Ritore:2008} by A. Ca\~ nete and M. Ritor\'e contains a thorough overview of these results.

M. Ritor\'e \cite{Ritore:2001B} has shown that solutions of the isoperimetric problem exist for every volume in complete Riemannian planes with non-negative curvature. Conversely, in \cite{Ritore:2001A}, M. Ritor\'e  gives examples of complete rotationally symmetric planes in which no optimizers for the isoperimetric problem exist for any volume. 


\subsection{Symmetrization techniques} We refer the reader to Sections 1.3 and 3.2 in \cite{Ros:2005} and to Section 1.3 \cite{Ritore:2010} for brief descriptions of the symmetrization techniques by J. Steiner and H. Schwarz \cite{Schwarz:1890} as well as W.-T Hsiang and W.-Y. Hsiang \cite{Hsiang-Hsiang:1989}. 

In \cite{Hsiang-Hsiang:1989}, W.-T. Hsiang and W.-Y. Hsiang apply their symmetrization technique to reduce the study of isoperimetric regions in $\R^n \times \mathbb{H}^{m}$ and in $\mathbb{H}^m \times \mathbb{H}^n$ to an ODE analysis of curves in the plane. In $\R \times \mathbb{H}^2$, the solutions are completely characterized.

R. Pedrosa and M. Ritor\'e \cite{Pedrosa-Ritore:1999} have characterized the isoperimetric domains of $\S^1 \times \S^2$ and $\S^1 \times \mathbb{H}^2$ as well as of $\S^1 \times \R^{n-1}$ when $3 \leq n \leq 8$, using symmetrization as in \cite{Hsiang-Hsiang:1989} and ODE and stability analysis. 

R. Pedrosa \cite{Pedrosa:2004} has used spherical symmetrization as in \cite{Hsiang-Hsiang:1989} to show that the isoperimetric regions in $\R \times \mathbb{S}^{n-1}$ are connected and smooth, and either topological balls or cylindrical of the form $(a,b)\times \mathbb{S}^{n-1}$. In $\R \times \mathbb{S}^2$, the author has obtained an explicit description of the isoperimetric regions. 


\subsection{Small isoperimetric regions in Riemannian manifolds}

D. Johnson and F. Morgan \cite{Johnson-Morgan:2000} have shown that isoperimetric regions of small volume in closed Riemannian manifolds are perturbations of small geodesic balls. An alternative argument that applies in dimension $n=3$ is given in Theorem 18 of \cite{Ros:2005}. For the relationship between small isoperimetric regions and scalar curvature we refer to the work of R. Ye \cite{Ye:1991}, P. Pansu \cite{Pansu:1998}, O. Druet \cite{Druet:2002a, Druet:2002b}, and S. Nardulli \cite{Nardulli:2009}.   


\subsection{Classifying stable constant mean curvature surfaces}
Using a particular choice of test function in the stability inequality, J. L. Barbosa, M. DoCarmo \cite{Barbosa-DoCarmo:1984}, and J. L. Barbosa, M. DoCarmo, and J. Eschenburg \cite{Barbosa-DoCarmo-Eschenburg:1988} have shown that in the simply connected space forms, every closed volume preserving stable constant mean curvature hypersurface is a geodesic sphere. 

M. Ritor\'e and A. Ros \cite{Ritore-Ros:1992} have characterized the isoperimetric regions in $\mathbb {RP}^3$ and $\R^3/S_\theta$, where $S_\theta$ is a subgroup of $O(3)$ generated by a translation or a screw motion, using the stability inequality in several different ways.  In \cite{Ritore-Ros:1996}, the same authors characterize the isoperimetric regions of most products $\mathbb{T}^2 \times \R$ where $\mathbb{T}^2$ is a flat $2$-torus.  (``Most" means all those from a compact subset in the non-compact moduli space of such manifolds.) A full characterization of the isoperimetric regions of $T^2 \times \R$, where $T$ is a flat torus with injectivity radius $1$ and area greater than a certain $\epsilon>0$, has been given by M. Ritor\'e in \cite{Ritore:1997}. 


\subsection{Isoperimetric comparison}

We point out the Levy-Gromov comparison theorem for the isoperimetric profile for closed Riemannian manifolds whose Ricci curvature is bounded below by that of the sphere, cf. Theorem 19 in \cite{Ros:2005}. B. Kleiner \cite{Kleiner:1992} has proven a sharp isoperimetric comparison result for three dimensional Hadamard manifolds. Alternative proofs of B. Kleiner's result have been given by M. Ritor\'e \cite{Ritore:2005} and by F. Schulze \cite{Schulze:2008}; these proofs are surveyed in \cite[Sections 3.2 and 3.3]{Ritore:2010}.  

H. Bray \cite{Bray:1998} has characterized the isoperimetric regions homologous to the horizon in the spatial Schwarzschild manifold. His method has been extended by H. Bray and F. Morgan \cite{Bray-Morgan:2002} to general spherically symmetric manifolds satisfying certain conditions. J. Corvino, A. Gerek, M. Greenberg, and B. Krummel
\cite{Corvino-Gerek-Greenberg-Krummel:2007} have applied the methods of \cite{Bray:1998, Bray-Morgan:2002} to characterize the isoperimetric regions in the spatial Reissner–Nordstrom and Schwarzschild anti de Sitter manifolds. In \cite{Brendle-Eichmair:2011}, S. Brendle and the first author use the results in \cite{Eichmair-Metzger:2010, Brendle:2011} to characterize the isoperimetric regions in the ``doubled" Schwarzschild manifold, complementing the results of H. Bray in \cite{Bray:1998}. 


\subsection {Effective isoperimetric inequalities} \label{sec:effectiveiso}
 
 In \cite{Fusco-Maggi-Pratelli:2008}, N. Fusco, F. Maggi, and A. Pratelli have given an effective isoperimetric inequality (``Bonnesen-style" as coined by R. Osserman \cite{Osserman:1979}) for sets in $\R^n$. Their result is sharp in a sense that the authors make precise. Their proof is based on Schwarz-Steiner symmetrization. See also the paper \cite{Figalli-Maggi-Pratelli:2010} by A. Figalli, F. Maggi, and A. Pratelli and the paper \cite{Cicalese-Leonardi:2010} by M. Cicalese and P. Leonardi for alternative proofs based respectively on optimal transport and explicit minimization. 


\bibliographystyle{amsplain}
\bibliography{references}

\end{document}